\numberwithin{equation}{section}
\theoremstyle{change} 
\newtheorem{theorem}{Theorem}[section]
\newtheorem{lemma}[theorem]{Lemma}
\newtheorem{definition}[theorem]{Definition}
\newtheorem{corollary}[theorem]{Corollary}
\theoremstyle{change}
\newtheorem{remark}[theorem]{Remark}
\newtheorem{example}[theorem]{Example}
\DeclareMathOperator{\id}{id}
\newcommand{\R}{\ensuremath{\mathbb{R}}}
\newcommand{\N}{\ensuremath{\mathbb{N}}}
\newcommand{\dv}[1]{\mathrm{d}#1}
\DeclareMathOperator{\diag}{\mathrm{diag}}
\newcommand{\Barg}[1]{\Bigl(#1\Bigr)}
\newcommand{\nset}[1]{\{#1\}}
\newcommand{\Bnorm}[1]{\Bigl\|#1\Bigr\|}
\newcommand{\Bbar}[1]{\Bigl|#1\Bigr|}
\newcommand{\bsqb}[1]{\bigl[#1\bigr]}
\newcommand{\Bsqb}[1]{\Bigl[#1\Bigr]}
\newcommand{\hide}[1]{{}}
\begin{document}

\begin{center}
{\LARGE Stress-modulated growth in the presence of nutrients -- \\[.1in] existence and uniqueness in one spatial dimension\\[.3in]} \today \\[.3in]
 Kira Bangert and Georg Dolzmann\footnote{Fakult\"at f\"ur Mathematik, Universit\"atsstra{\ss}e 31, 93053 Regensburg, Deutschland, georg.dolzmann@ur.de (corresponding author)}
 \vspace*{.2in}
\end{center}

\textbf{Keywords:} biomechanics, growth, stress, morphoelasticity, multiplicative decomposition

\medskip

\begin{abstract}
Existence and uniqueness of solutions for a class of models for stress-modulated growth is proven in one spatial dimension. The model features the multiplicative decomposition of the deformation gradient $F$ into an elastic part $F_e$ and a growth-related part $G$. After the transformation due to the growth process, governed by $G$, an elastic deformation described by $F_e$ is applied in order to restore the Dirichlet boundary conditions and therefore the current configuration might be stressed with a stress tensor $S$. The growth of the material at each point in the reference configuration is given by an ordinary differential equation for which the right-hand side may depend on the stress $S$ and the pull-back of a nutrient concentration in the current configuration, leading to a coupled system of ordinary differential equations.
\end{abstract}

\section{Introduction}
In the past twenty years, the broad area of biomechanics has gained an increasing interest in the mathematical community in view of the many challenging questions that are related to the analysis of the models that have been proposed. One approach is inspired by models for crystal plasticity in terms of modern continuum mechanics in the framework of nonlinear theories which were developed, for example, in~\cite{Kroner1960,Lee1969}. They are based on a multiplicative decomposition of the total deformation gradient into an elastic part and a plastic part. Here, the plastic part encodes the permanent deformation of the initial reference configuration due to plastic deformation and the elastic part describes the subsequent elastic deformation of the plastically deformed reference configuration onto the currently observed configuration. These ideas have been adapted for growth processes, for example in~\cite{LubardaHoger2002,MenzelKuhl2012,RodriguezHogerMcCulloch1994,SkalakEtAlJTheorBiol1982,TaberApplMechRev1995}, where the initial deformation of the reference configuration due to plastic deformation is replaced by an initial deformation due to the growth processes in the material. A theory based on a split of the process into two distinct subprocesses, namely the volumetric, stress-free growth and the subsequent mechanical response, has been developed in \cite{AmbrosiEtAl2011,AmbrosiGuillouContMechTD2007,AmbrosiMollicaIntJEngrgSci2002,GorielyBook,GorielyMoultonOUP2010}, see also the references therein. One particular feature of these models is that they predict residual stress and it is, for example, an open problem to calculate the opening angle produced by a given residual stress field in a soft biological tissue, see, e.g., \cite{HuangOgdenPentaJElast2021,VanDykeHoger2002OpeningAngle}. A different approach to problems related to growth processes in the presence of an active environment like a bood vessel is based on a free boundary fluid-structure interaction problem and was proposed in~\cite{YangEtAlJMathBio2016BloodVessels}. Very recently, significant analytical progress was achieved in~\cite{AbelsLiuArXive2021,AbelsLiuArXive2022}. Ideas to understand mechanical properties for plant tissues in the convext of homogenization were developed in~\cite{PiatnitskiPtashnyk2020}.

In this article, we focus on one aspect in this broad context, namely stress modulated growth, that is, the interplay of growth processes with elastic stress. Our main purpose is to present a complete and self-contained analytical treatment in the sense of existence and uniqueness of solutions for one particular class of models, and therefore we concentrate on a one-dimensional situation. Major obstacles that must be addressed in order to extend our results to a higher-dimensional case, are commented on in Section~\ref{sec:conclusions}.

\medskip

\textit{The multiplicative decomposition of the deformation gradient:} The model we are interested in is closely related to the one-dimensional models in \cite[Ch. 4]{GorielyBook}. We consider a growing body in one dimension, that is, a material that is constrained to a straight line and that can undergo a change in its conformation by a change in density or length, but not by effects like buckling, bending, or twist which were considered in~\cite{BressanPalladinoShen2017}. Suppose that the body is given in its initial reference configuration at time $t=0$ by the interval $I_0=(0,L_0)\subset \R$ with $L_0>0$. As it is customary in morphoelasticity, we introduce a natural or virtual configuration of the material after unconstrained growth at time $t$, $I_v(t)=(0,L(t))$, and the current or deformed configuration, $I_c(t) = (0,\ell(t))$. Since the configuration $I_v(t)$ can be realized by an interval, we refer to it in this particular one-dimensional situation as the natural configuration of the material at time $t$ but keep using the index $v$. The goal is to determine, for a given time horizon $T$, a deformation $y:Q_T=I_0\times (0,T) \to \R$ such that $y(I_0,t)$ is the current configuration of the growing body at time $t$. Here, we employ the hypothesis of a multiplicative structure, namely that the deformation gradient $\partial_X y(X,t)$ is given by a product of an elastic part $F_e(X,t)$ and a growth related part $G(X,t)$ such that the equation 
\begin{align}\label{multiplicative}
\partial_X y(X,t) = F_e(X,t) G(X,t)
\end{align}
is satisfied. It is a very special aspect of the one-dimensional situation that $y$ is determined from this equation and suitable boundary conditions in $X$ as soon as $F_e$ and $G$ have been found. Before giving the precise definition, we discuss the interpretation of $F_e$ and $G$. The growth related factor $G$ in the deformation gradient prescribes the local change of an infinitesimal material element in the body due to unconstrained growth, that is, $G$ is the deformation gradient of the deformation from the reference configuration into the natural deformation. Again, due to the special structure in one dimension, there exists a potential $g$ in the variable $X$, the deformation due to growth, with $g(0,t)=0$ and $\partial_X g(X,t) = G(X,t)$ and it is required that $g(\cdot, t)$ is a bijection for all $t$ fixed. This is, for example, the case if $G>0$, an assumption which will be satisfied in our model. 

As in \cite[Sec. 4.2]{GorielyBook}, we assume that the growing body is confined by two rigid plates which leads to the boundary conditions $y(0,t)=0$ and $y(L_0,t)=\ell_0$ for all $t\in (0,T)$. In the following we assume that $L_0=\ell_0$ but we keep the distinct symbols to indicate the distinct configurations, namely the reference configuration and the currently observed configuration. This identification is not necessary but convenient since it leads to a stress-free initial configuration.

\medskip
\textit{Elastic stress after growth due to boundary conditions:}
The boundary conditions for the growing material will, in general, be violated after unrestricted growth which leads to the natural configuration $I_v(t) = (g(0,t), g(L_0,t))$. Therefore, a superimposed elastic deformation $\phi=\phi(z, t)$ of the natural configuration $I_v(t)$ is needed in order to restore the boundary values and this elastic deformation is obtained as a minimizer of a hyperelastic variational integral defined in the natural configuration. This requires an assumption on the elastic properties of the material after growth. Our model is based on the assumption that the material point $X$ in the reference configuration and the material point $g(X,t)$ in the natural configuration have the same elastic properties. Hence, it suffices to define a stored energy density $W:[0,L_0]\times (0,\infty)\to \R$ in the reference configuration and this stored energy density induces the stored energy density $W_G$ in the natural configuration via $W_G(\cdot,p) = W(g^{-1}(\cdot,t),p)$. Consequently, the elastic deformation $\phi(\cdot, t)$ is a minimizer of a hyperelastic variational integral
\begin{align}\label{VariationalIntegral}
\int_{g(0,t)}^{g(L_0,t)} W_G(z, \partial_z\phi(z,t))\dv{z} =  \int_{g(0,t)}^{g(L_0,t)} W(g^{-1}(z,t), \partial_z\phi(z,t))\dv{z}
\end{align}
subject to the boundary conditions $\phi(g(0,t),t)=0$ and $\phi(g(L_0,t),t)=\ell_0$. The matrix $F_e$ in~\eqref{multiplicative}, the elastic contribution to the total deformation gradient, is given by $F_e(X,t)=(\partial_z \phi)(g(X,t),t)$. The Euler-Lagrange equation for the variational problem implies that the total stress
\begin{align}\label{defstress}
 S(t)=(\partial_p W)(g^{-1}(z,t),\partial_z\phi(z,t))
\end{align}
in the system is constant in $z$. The deformation due to growth, $g(\cdot, t):[0,L_0]\to \R$,  $X\mapsto g(X,t)$ and the deformation due to elastic forces, $\phi(\cdot, t):[g(0),g(L_0)]\to \R$, determine the total deformation $y(\cdot ,t) = \phi(g(\cdot, t),t)$.

\medskip

\textit{The reaction-diffusion equation for the nutrients:}
The model is completed by a reaction-diffusion equation for nutrients in the current configuration of the form 
\begin{align}\label{reactiondiffusion}
 -\partial_x (D(x,t) \partial_x n(x,t)) = -\beta(x,t) n(x,t)\quad \text{ for }x\in (0, \ell_0),\, t\in (0,T)
\end{align}
subject to boundary conditions $n(0,t) = n_L$ and $n(\ell_0,t)=n_R$ with $n_L$, $n_R>0$ where the diffusion constant $D$ and the absorption rate $\beta$ depend on space and time. The specific choice of the constants at time $t$ in the current deformation is part of the model and in order to illustrate a class of models which can be treated by our theory, the following assumptions are made for the two stages in the deformation process, first the change in the reference configuration due to growth or absorption and then the change from the natural configuration to the observed configuration due to elastic deformation. 

For the growth process it is assumed that the diffusion coefficient and the absorption rate of the material generated by growth at time $t$ in $z=g(X,t)$ in the natural configuration $I_v(t)$ are given by the diffusion coefficient and the growth rate of material in the reference configuration at time $t=0$ in $X$. Thus, the diffusion constant $D_v$ and the reaction rate $\beta_v$ in the natural configuration after growth according to $G(\cdot, t)\in \mathcal{L}^\infty([0,L_0])$ are given, at $z= g(X,t)$ and time $t\in [0,T]$, by 
\begin{align*}
 D_v(z,t) = (D_0 \circ g^{-1})(z,t)\,,\quad \beta_v(z,t) = (\beta_0\circ g^{-1})(z,t)\,.
\end{align*}
The transformation of the diffusion constant and the reaction rate during the elastic deformation of the material is based on the assumption that the diffusion in the current configuration corresponds to the diffusion in the natural configuration via the change of variables given by the elastic deformation $\phi$. That is, if $n_v(\cdot, t)$ is the solution of the reaction-diffusion equation on $I_v(t)$ with coefficients $D_v(\cdot, t)$ and $\beta_v(\cdot, t)$ and if $n(\cdot, t)$ is the solution of the reaction-diffusion equation on $I_c(t)$ with coefficients $D(\cdot, t)$ and $\beta(\cdot, t)$, then, at $x=\phi(z)$, the equation $n(x, t) = n_v(z, t)$ holds.  The corresponding equations for $n_v(\cdot, t)$ and $n(\cdot, t)$ are given for all test functions $\psi_v \in H_0^1(g(0,t), g(L_0,t))$ by
\begin{align*}
 \int_{g(0,t)}^{g(L_0,t)} D_v(z,t) \partial_z n_v(z, t) \partial_z \psi_v(z) \dv{z} = -\int_{g(0,t)}^{g(L_0,t)} \beta_v(z,t) n_v(z,t) \psi_v(z)\dv{z}
\end{align*}
and for all test functions $\psi\in H_0^{1}(0,\ell_0)$ by
\begin{align*}
\int_{0}^{\ell_0} D(x,t) \partial_x n(x,t) \partial_x\psi(x)\dv{x} = - \int_0^{\ell_0}\beta(x,t) n(x,t) \psi(x)\dv{x}\,,
\end{align*}
respectively.

By a change of variables in the weak equation for $n(\cdot, t)$, we find, with $0=\phi(g(0,t),t)$ and $\ell_0 = \phi(g(L_0,t),t)$, that
\begin{align*}
&\int_{g(0,t)}^{g(L_0,t)} D(\phi(z,t),t) \partial_x n(\phi(z,t),t) \partial_x \psi(\phi(z,t))\partial_z\phi(z,t)\dv{z}& \\ &\qquad = - \int_{g(0,t)}^{g(L_0,t)}\beta(\phi(z,t),t) n(\phi(z,t),t) \psi(\phi(z,t))\partial_z\phi(z,t)\dv{z}\,.
\end{align*}
Since $\phi$ is a diffeomorphism, $\widetilde{\psi}(\cdot,t)=(\psi\circ \phi)(\cdot,t)\in H_0^1(g(0,t),g(L_0,t))$ is an admissible test function and the weak equation after the change of variables can be rewritten as 
\begin{align*}
&\int_{g(0,t)}^{g(L_0,t)} D(\phi(z,t),t) \partial_z[n(\phi(z,t),t)] \partial_z\widetilde{\psi}(z,t)\frac{1}{\partial_z\phi(z)}\dv{z}& \\ &\qquad = - \int_{g(0,t)}^{g(L_0,t)} \beta(\phi(z,t),t) n(\phi(z,t),t) \widetilde{\psi}(z,t)\partial_z\phi(z,t)\dv{z} \,.
\end{align*}
The requirement that this equation corresponds to the reaction-diffusion equation in the natural configuration leads to the assumptions 
\begin{align*}
 D_v(z,t) = D_0(g^{-1}(z,t)) = \frac{D(\phi(z,t),t)}{\partial_z\phi(z,t)} \quad \Leftrightarrow \quad D(x,t) = D_0(g^{-1}(\phi^{-1}(x,t),t)) \partial_z\phi(\phi^{-1}(x,t),t)
\end{align*}
and 
\begin{align*}
 \beta_v(z,t) = \beta_0(g^{-1}(z,t)) = \beta(\phi(z,t),t) \partial_z\phi(z,t) \quad \Leftrightarrow \quad \beta(x,t) = \frac{\beta_0(g^{-1}(\phi^{-1}(x,t),t)}{\partial_z\phi(\phi^{-1}(x,t),t)}\,.
\end{align*}
If the material experiences strong growth, the corresponding elastic deformation is a strong compression and the associated diffusion constant is small. 
In order to have the usual elliptic regularity estimates at our disposal, we assume in the following that $D_0$, $\beta_0\in W^{1,\infty}([0,L_0])$\hide{, $\beta_0\in \mathcal{L}^\infty([0,L_0])$} and that there exist constants $D_{min}$, $D_{max}$, $\beta_{min}$, $\beta_{max}\in (0,\infty)$ with
\begin{align}\label{bounds:DandBeta}
 \forall X\in [0,L_0]\colon D_0(X) \in [D_{min}, D_{max}]\,,\quad \beta_0(X)\in [\beta_{min}, \beta_{max}]\,.
\end{align}

\bigskip

\textit{General assumptions for the growth dynamics:}
The foregoing discussion shows that the deformation gradient $G$ of the growth map $g$ is the basic variable: For a given $G$ one finds the growth map $g$ by integration, the natural configuration $I_v$, the subsequent elastic deformation $\phi:I_v\to I_c$ as a minimizer of the hyperelastic variational problem and the current diffusion coefficient $D$ and absorption rate $\beta$ which determine the nutrient concentration $n$. The deformation $y$ from the reference configuration onto the deformed configuration can be used to define the nutrient configuration in the reference configuration as $N=n\circ y$. It remains to state an evolution equation for $G$. Here, we follow the approach that $G$ satisfies at each point $X\in [0,L_0]$ an ordinary differential equation of the form
\begin{align}\label{growthequation}
 \dot{G}(X,t) = \mathcal{G}(G(t,X), S(t), N(\cdot,t),X)\,,\quad G(X,0)=1\,,
\end{align}
that is, the rate of growth depends on the stress $S(t)$ and the nutrient concentration $N(\cdot,t)$ in the system at time $t$. Since the model prescribes the growth at every point $X\in [0,L_0]$ it is natural to use the space $\mathcal{L}^\infty([0,L_0])$ of all measurable and bounded functions together with the supremum norm as the basic function space for the variable $G$. Moreover, the evolution equations at each point $X$ are coupled to the state of the system at all points in $[0,L_0]$ since both the stress $S$ in the system as well as the referential description of the nutrient concentration $N$ are nonlocal functions. Therefore we interpret the evolution~\eqref{growthequation} as an evolution of the system in the Banach space $\mathcal{L}^\infty([0,L_0])$ on a time interval $[0,T]$ with $T>0$.

\begin{definition}\label{def:solutionG}
 A function $G\in C^1([0,T];\mathcal{L}^\infty([0,L_0]))$ determines a solution of the one-di\-men\-sional growth problem with initial condition $G_0\in \mathcal{L}^\infty([0,L_0])$ with $G_0>0$ everywhere if $G$ is a solution of the ordinary differential equation 
 \begin{align*}
  \dot{G} = \widehat{\mathcal{G}}(G)\,,\quad G(0) = G_0
 \end{align*}
in the Banach space $\mathcal{L}^\infty([0,L_0])$. Here, for $X\in [0,L_0]$ and $G\in \mathcal{L}^\infty([0,L_0])$,
\begin{align*}
 \widehat{\mathcal{G}}(G)(X) = \mathcal{G}(G(X), S(G), N(G)(X),X)
\end{align*} 
with a suitable function $\mathcal{G}$ (see Section~\ref{sec:existence} for the precise assumptions) where $S=S(G)\in \R$ and $N=N(G)=n(G)\circ y(G)\in H^1(0,L_0)$ denote the elastic stress due to the elastic deformation and the nutrient concentration in the reference configuration according to~\eqref{defstress} and~\eqref{reactiondiffusion}.
\end{definition}

The decomposition~\eqref{multiplicative} is thus given by 
\begin{align*}
 \partial_X y(X,t) = F_e(X,t) G(X,t) = (\partial_z \phi)(g(X,t),t) \partial_X g(X,t)\,.
\end{align*}
Since we consider $G$ to be the basic unknown in the system and not $g$, the anti-derivative of $G$ in $X$, we define $y$ as the anti-derivative of $F_e(X,t) G(X,t)$ in $X$ and interpret the decomposition for $\partial_X y$ as a pointwise identity.

We illustrate this concept with a few examples and begin with the situation without any feedback of the growth in the body onto the local growth via the stress or the nutrient concentration in the material. The only dependence of the growth on the material point $X$ is given by a growth coefficient $\gamma(X)$ which we assume to be piecewise constant on two subintervals of $[0,L_0]$. 

\begin{example}[pure growth]\label{ex:puregrowth}
Suppose that $X_I\in (0,L_0)$, $\gamma_0$, $\gamma_1\in \R$, $\gamma_0, \gamma_1>0$, that $\gamma\in \mathcal{L}^\infty([0,L_0])$ is given by $\gamma(X) = \chi_{[0,X_I]}\gamma_0 + \chi_{(X_I,L_0]}\gamma_1$ and that 
\begin{align*}
 \widehat{\mathcal{G}}:\mathcal{L}^\infty([0,L_0]) \to \mathcal{L}^\infty([0,L_0])\,,\quad G\mapsto  \widehat{\mathcal{G}}(G)\text{ with } \widehat{\mathcal{G}}(G)(X) = \mathcal{G}(G(X),X) = \gamma(X) G(X)\,.
\end{align*}
Then, the evolution of $G$ is determined from the initial condition $G(0)=G_0\in \mathcal{L}^\infty([0,L_0])$ via $G(t,X) = G_0(X) \exp(\gamma(X)t)$. The function $g$ is Lipschitz continuous in $X$ and at the beginning of the growth process modeled by the initial condition $G_0(X)=1$ for all $X\in [0,L_0]$ given by 
\begin{align*}
 g(X,t) & = \int_0^X G(U,t)\dv{U} =  \int_0^X G_0(U) \exp(\gamma(U)t)\dv{U} \\[.1in] & = \left\{\begin{array}{cl} X \exp(\gamma_0 t) & \text{ if }X\leq X_I\,,\\[.05in] X_I \exp(\gamma_0 t) + (X-X_I) \exp(\gamma_1 t) & \text{ if }X > X_I\,. \end{array} \right.
\end{align*}
In fact, for all $t$ the map $g(\cdot, t)$ is bijective and bi-Lipschitz.
\end{example}

Suppose that the elastic properties of the material in the reference configuration at a point $X$ are given by a stored energy density $W(X, \cdot)$. Since we assume that the elastic properties do not change during the growth process, the stored energy density of the material after growth in the natural configuration $[g(0,t),g(L_0,t)]$ is given by 
\begin{align*}
 W_G: [g(0,t),g(L_0,t)]\times (0,\infty)\to \R\,,\quad (z, p) \mapsto W_G(z,p) = W(g^{-1}(z,t), p)\,.
\end{align*}
To be specific, we postpone the discussion of the corresponding variational problem to Section~\ref{sec:hyperelastic} and illustrate the ideas along the lines of Example~\ref{ex:puregrowth}. Assume that $\kappa_0$, $\kappa_1\in (0,\infty)$ and that $W_0:(0,\infty)\to \R$ is a smooth and strictly convex function with $W_0(p)\to \infty$ as $p\to 0$ and $W_0(p)/p\to \infty$ as $p\to \infty$. Suppose that $\kappa:[0,L_0]\to \R$ is given by $\kappa = \chi_{[0,X_I]}\kappa_0 + \chi_{(X_I,L_0]}\kappa_1$ and that $W(X,p) = \kappa(X) W_0(p)$. The Euler-Lagrange equation for the minimizer $\phi$ of the variational integral~\eqref{VariationalIntegral} implies that the stress 
\begin{align*}
 S(t) = (\partial_p W_G)(z, \partial_z\phi(z,t)) = \kappa(g^{-1}(z,t)) (\partial_p W_0)(\partial_z\phi(z,t))
\end{align*}
is constant and therefore $\partial_z\phi(\cdot,t)$ is constant on $[g(0,t),g(X_I,t)]$ and $(g(X_I,t), g(L_0,t)]$, respectively, and an explicit formula for $\phi(\cdot, t)$ can be obtained from the boundary conditions and the continuity of the stress. The composition $y(X,t)=\phi(g(X,t),t)$ with $\partial_X y(X,t) = (\partial_X \phi)(g(X,t),t) \partial_X g(X,t) = (\partial_X \phi)(g(X,t),t) G(t,X)$ determines the deformation of the reference configuration onto the deformed configuration. In Figure~\ref{fig:RefVirt} a sketch of the siutation with a given growth tensor $G$ as a graph over the reference configuration and the elastic deformation as a graph over the natural configuration is shown.

It is illustrative to rewrite the equation for $S$ for elastic deformations with $\partial_z\phi$ small. In this case, we may assume in the sense of a local Taylor series expansion that $W_0(p)=(1/2)(p-1)^2$ and we obtain an explicit representation for $\partial_z\phi$, 
\begin{align*}
 S(t) = \kappa(g^{-1}(z,t)) (\partial_z\phi(z,t)-1)\quad \Leftrightarrow\quad \partial_z\phi(z,t) = \frac{S(t)}{\kappa(g^{-1}(z,t))}+1\,.
\end{align*}
Note that $g$ is the anti-derivative of a measurable and bounded function and thus Lipschitz continuous. Therefore, the regularity of $\partial_z\phi$ depends on the properties of $\kappa$. If $\kappa$ is merely positive, measurable and bounded, the same is true for $\partial_z\phi$. However, if $\kappa$ is positive and at least Lipschitz continuous, $\partial_z\phi$ is Lipschitz continuous as a concatenation of Lipschitz continuous functions. The Lipschitz constant depends on global bounds on $S$, on $\kappa$, on the Lipschitz constant of $\kappa$, and on bounds for $\partial_X g=G$, which appear in the calculation of the Lipschitz constant of $g^{-1}$, only.

\begin{example}[coupling via stress]\label{ex:stresscoupling}
Consider the situation in Example~\ref{ex:puregrowth} and assume that $\mu\in W^{1,\infty}(\R)$ is bounded and increasing. Suppose that the evolution is given by
\begin{align*}
 \widehat{\mathcal{G}}:\mathcal{L}^\infty([0,L_0]) \to \mathcal{L}^\infty([0,L_0])\,,\quad G\mapsto  \widehat{\mathcal{G}}(G)\text{ with } \widehat{\mathcal{G}}(G)(X) = \gamma(X)\mu(S(G)) G(X)
\end{align*}
where $S(G)$ denotes the stress in the body after stress-free growth given by $G$ and subsequent elastic deformation. The fact that $\mu$ is increasing reflects the assumption that tensile stress favors growth within certain limits and that compressive stress inhibits growth and may lead to absorption. Of course, other assumptions can be made.
A typical example is the family of functions 
\begin{align*}
 \mu(S) = a \arctan(S-b) + c\,,\quad a,b,c\in \R,\, a>0\,.
\end{align*}
If $b$ is interpreted as a homeostatic stress, the linearization about $b$ coincides with the model in~\cite[Sec. 4.4.3]{GorielyBook}.
 
As in Example~\ref{ex:puregrowth}, once the map $G(\cdot, t)$ is known at time $t$, we can calculate the elastic deformation which is piecewise affine and continuous and determines the stress in the body. Thus, the right-hand side in~\eqref{growthequation} (no dependence on $N$) is well defined and we can solve the ordinary differential equation in the Banach space $\mathcal{L}^\infty([0,L_0])$. This can be achieved based on Picard-Lindel\"of's theorem on existence and uniqueness if the right-hand side is a Lipschitz continuous function in $G$. Therefore, the key step in the proof of the existence theorem is the verification of the Lipschitz continuity of all relevant terms on the right-hand side of the ordinary differential equation. 
\end{example}

\begin{example}[coupling via stress and nutrients]\label{ex:fullcoupling}
 Finally, we consider the situation in Example~\ref{ex:stresscoupling} and assume that there exists a Lipschitz continuous function $\eta:\R\to \R$ such that  
\begin{align*}
 \widehat{\mathcal{G}}:\mathcal{L}^\infty([0,L_0]) \to \mathcal{L}^\infty([0,L_0])\,,\quad G\mapsto  \widehat{\mathcal{G}}(G)\text{ with } \widehat{\mathcal{G}}(G)(X) = \gamma(X)\mu(S(G))\eta(N(G)(X)) G(X)\,.
\end{align*}
The function $\eta=\id_{\R}$ is an admissible choice since the solutions of the reaction-diffusion equation are bounded and nonnegative. Here, $N=n\circ y$ denotes the referential description of the nutrient concentration. In Figure~\ref{fig:Current} a sketch of the nutrient concentration as a graph over the current configuration is shown. 
\end{example}

In order to ensure the Lipschitz continuity of the right-hand side in the evolution equation~\eqref{growthequation} we need assumptions concerning the constitutive function $\widehat{\mathcal{G}}$ and the stored energy density $W$. The assumptions on $W$ and $\mathcal{G}$ will be formulated in Section~\ref{sec:hyperelastic} and Section~\ref{sec:existence}, respectively.

The paper is organized as follows: Section~\ref{sec:preliminiaries} recalls preliminary results that will be used thoughout the paper. In Section~\ref{sec:hyperelastic} the assumptions concerning the stored energy density $W$ are stated and uniform a~priori estimates are derived. The critical Lipschitz dependence of the stress and the nutrients on the growth tensor is proven in Sections~\ref{sec:stresslip} and~\ref{sec:nutrientslip}, respectively. Section~\ref{sec:existence} presents the assumptions on the growth dynamics $\mathcal{G}$ and the general existence and uniqueness theorem. In Section~\ref{sec:conclusions} some ideas concerning further developments are presented.

\bigskip

\textit{Notation:} Following customary notation in continuum mechanics (also used in the textbook~\cite{GorielyBook}), we use capital letters with a subscript $0$ to denote objects in the initial reference configuration, capital letters to denote the objects in the natural configuration and small letters to denote the objects in the current configuration, indicating the time $t$ as an argument. The independent spatial variable is denoted by $X$, $z$ and $x$ in the reference configuration, the natural configuration, and the deformed configuration, respectively. If $f$ is a function of one independent variable, we write $f'$ for the derivative. However, in order to underline the dependence of the domain, we also use the symbols $\partial_X$, $\partial_z$ and $\partial_x$ for total derivatives. 

We use standard notation for function spaces like Lebesgue spaces $L^p(I)$, Sobolev spaces $W^{k,p}(I)$, $H^k(I)=W^{k,2}(I)$, or spaces of differentiable functions $C^k(I)$ on an open interval $I$. If $I=[a,b]$ is a closed interval, we write $C^k([a,b])$ if the one-sided derivatives exist on the boundary of the interval. The space of all bounded and measurable functions endowed with the supremum norm is denoted by $\mathcal{L}^\infty([a,b])$. Partial derivatives are written as $\partial_p W = W_p$ and if a function $f(x,t)$ of two variables is considered as a function of one variable with the other variable fixed, then we use an index notation, that is, for $x$ fixed we write $f_{;x}(t)$. Here we write the semicolon in order to avoid confusion with partial derivatives. For example, $W_{pp;X}:(0,\infty)\to \R$ is defined by $W_{pp;X}(p) = \partial_{pp}W(X,p)$.

A significant part of the analysis is devoted to proving that dependent variables are Lipschitz continuous in their arguments. Suppose that $\chi =\chi(G)$ is a function, we frequently write the argument $G$ as an index, i.e., $\chi_G = \chi(G)$. Moreover, for $G_1$, $G_2$ we use the notation $\chi_i = \chi(G_i)$, $i=1,2$. For exampe, this is applied to $\chi\in \{\phi, y, S, W\}$ indicating the elastic deformation, the total deformation, the stress or the induced elastic energy. Moreover, in order to obtain global existence via Picard-Lindel\"of's theorem, it is crucial to obtain uniform control on the constants. This will be achieved by stating all estimates for a suitable ball in $\mathcal{L}^\infty([0,L_0])$. Fix $G_0\in \mathcal{L}^\infty([0,L_0])$, $R_1\in (0,\infty)$ with 
\begin{align}\label{boundsonG}
 \inf_{X\in [0,L_0]} G_0(X) - R_1 = \Gamma_0 >0\,, \quad \sup_{X\in [0,L_0]} G_0(X) + R_1 = \Gamma_1 <\infty\,.
\end{align}
Then, every $G\in B_1=B(G_0, R_1)\subset \mathcal{L}^\infty([0,L_0])$ satisfies $\inf G\geq \Gamma_0$ and $\sup G\leq \Gamma_1$. It is the goal of our analysis to verify that estimates involving $G\in B_1$ only depend on $\Gamma_0$ and $\Gamma_1$. We say that a constant on the right-hand side of an estimate depends only on global constants if the constant depends only on $\Gamma_0$, $\Gamma_1$, $R_1$ in~\eqref{boundsonG}, $L_0$, $\ell_0$, the elastic energy density $W$, e.g., through derivatives of $W$ and the function $\theta$ in (W2), and bounds on the coefficients in the reaction-diffusion equation in~\eqref{bounds:DandBeta}.

\section{Preliminaries}\label{sec:preliminiaries}

Before discussing the properties of the solutions of the ordinary differential equation which determines the growth dynamics and the induced variational problem which provides the elastic deformation, we recall some basic properties of Lipschitz functions that will be used in the verification of the assumptions of Picard-Lindel\"of's theorem. If $X$ and $Y$ are normed space, we denote the topological dual space of $X$ by $X'$ and we denote the space of all bounded and linear maps from $X$ into $Y$ by $\mathcal{L}(X,Y)$. For $f:U\subset X\to Y$ we define 
\begin{align*}
L_f = \sup_{x_1,x_2\in U,x_1\neq x_2} \frac{\| f(x_1) - f(x_2) \|_{Y}}{\| x_1 - x_2 \|_X}
\end{align*}
since the domain $U$ is always clear from the context. Any Lipschitz continuous function is bounded on bounded sets. For example, if $f:B(0,R)\subset X \to Y$ is Lipschitz continuous, for all $x\in B(0,R)$ the uniform estimate $\|f(x)\|_Y \leq L_f\cdot R + \| f(0) \|_Y$ follows.

\begin{lemma}\label{lem:PropertiesLip}
Let $X$, $Y$, $Z$ be normed spaces. 
\begin{itemize}
 \item [(a)] If $f:U\subset X\to Y$ and $g: V\subset Y\to Z$ are Lipschitz continuous functions with $f(U)\subset V$, $h=g \circ f:U\to Z$ is Lipschitz continuous with $L_h \leq L_f L_g$. 
 
\item [(b)] If $f:U\to Y$ and $f':U\to Y'$ are Lipschitz continuous and bounded, 
\begin{align*}
 h:U\to \R \,,\quad x\mapsto {_Y}\langle f(x), f'(x)\rangle_{Y'}
\end{align*}
is Lipschitz continuous with 
\begin{align*}
 L_h \leq \sup_{U}\| f' \|_{Y} L_f+ \sup_{U}\| f \|_Y L_{f'}\,. 
\end{align*}
An analogous formula holds for $f$, $g:U\to Y$ with $Y$ a Banach algebra and $h(x) = f(x)\cdot g(x)$.

\item [(c)] If $r$, $s\in (0,\infty)$ with $r<s$ and $f:U\subset X\to[r,s]$ is Lipschitz continuous and bounded,
\begin{align*}
h:U \to \R\,,\quad x\mapsto h(x) = \frac{1}{f(x)}
\end{align*}
is Lipschitz continuous with $L_h \leq L_f/r^2$.

\item [(d)] If $c_0$, $c_1\in (0,\infty)$ with $c_0<c_1$, $A_0\in \R$, $r$, $s\in \R$ with $r<s$,  $a\in L^\infty([r,s];[c_0,c_1])$,
\begin{align*}
A:[r,s]\to \R\,,\quad x\mapsto A(x) = \int_r^{x} a(\xi) \dv{\xi} + A_0
\end{align*}
is bi-Lipschitz, that is, $A^{-1}$ exists and $A$ and $A^{-1}$ are Lipschitz continuous with $L_A\leq c_1$ and $L_{A^{-1}}\leq 1/c_0$.
\end{itemize}
\end{lemma}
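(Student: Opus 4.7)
The plan is to verify each of the four items by a direct calculation, treating them in order.

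For (a), I would take $x_1,x_2\in U$ and use the Lipschitz estimate for $g$ on $f(x_1),f(x_2)\in V$ followed by the Lipschitz estimate for $f$, which immediately gives
\[
\|h(x_1)-h(x_2)\|_Z\le L_g\|f(x_1)-f(x_2)\|_Y\le L_g L_f\|x_1-x_2\|_X .
\]

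For (b), the key idea is the standard add–subtract trick. I would write
\[
h(x_1)-h(x_2)={_Y}\langle f(x_1)-f(x_2),f'(x_1)\rangle_{Y'}+{_Y}\langle f(x_2),f'(x_1)-f'(x_2)\rangle_{Y'},
\]
apply the duality estimate $|{_Y}\langle u,u'\rangle_{Y'}|\le\|u\|_Y\|u'\|_{Y'}$ to each term, and bound one factor in each term by the supremum over $U$ and the other by the Lipschitz estimate for $f$ or $f'$. The Banach-algebra version is literally the same computation with the duality pairing replaced by the product.

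For (c), the identity $1/f(x_1)-1/f(x_2)=(f(x_2)-f(x_1))/(f(x_1)f(x_2))$ together with $f(x_i)\ge r$ gives the bound $|h(x_1)-h(x_2)|\le L_f\|x_1-x_2\|_X/r^2$ directly.

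For (d), the upper Lipschitz bound for $A$ is immediate from $|A(x_1)-A(x_2)|=|\int_{x_2}^{x_1}a(\xi)\,\mathrm{d}\xi|\le c_1|x_1-x_2|$. Since $a\ge c_0>0$ almost everywhere, the same integral identity yields $|A(x_1)-A(x_2)|\ge c_0|x_1-x_2|$, which shows that $A$ is strictly increasing, hence bijective onto its image, and that $A^{-1}$ is Lipschitz with constant at most $1/c_0$ by substituting $x_i=A^{-1}(y_i)$. None of these steps presents a real obstacle; the only points requiring a brief argument are the add–subtract decomposition in (b) and the two-sided bound on $|A(x_1)-A(x_2)|$ in (d) that simultaneously delivers invertibility and the Lipschitz constant of $A^{-1}$.
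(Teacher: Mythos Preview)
Your proposal is correct and matches the paper's proof essentially line for line: the chain estimate in (a), the add--subtract decomposition in (b), the algebraic identity in (c), and the two-sided integral bound in (d) are exactly the arguments the authors use. There is nothing to add.
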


\begin{proof}
If the assumptions in (a) hold, then for all $x_1$ and $x_2\in U$, 
 \begin{align*}
  \| g(f(x_1)) - g(f(x_2)) \|_Z \leq L_g \| f(x_1) - f(x_2) \|_{Y}\leq L_f L_g \| x_1 - x_2 \|_X\,.
 \end{align*}
Similarly, in the situation (b) for all $x\in U$, 
\begin{align*}
& |{_Y}\langle f(x_1), f'(x_1)\rangle_{Y'} - {_Y}\langle f(x_2), f'(x_2)\rangle_{Y'}| \\ & \quad \leq |{_Y}\langle f(x_1)-f(x_2), f'(x_1)\rangle_{Y'}| + {_Y}\langle f(x_2),f'(x_1)- f'(x_2)\rangle_{Y'}| \\ & \quad \leq \sup_{U}\| f' \|_{Y} L_f+ \sup_{U}\| f \|_Y L_{f'}\,.
\end{align*}
To prove (c) for all $x_1$, $x_2\in U$,
\begin{align*}
 \Bbar{ \frac{1}{f(x_1)} - \frac{1}{f(x_2)} } = \frac{| f(x_1) - f(x_2)| }{|f(x_1) f(x_2)|}\leq \frac{L_f |x_1 - x_2|}{r^2}\,.
\end{align*}
Finally, in (d) $A$ is strictly monotonically increasing, $A^{-1}$ exists and for all $x_1$, $x_2\in [r,s]$
\begin{align*}
| A(x_1) - A(x_2) | = \Bbar{ \int_{x_1}^{x_2} a(\xi) \dv{\xi} } \leq c_1 |x_1 - x_2|\,.
\end{align*}
For $y_1$, $y_2 \in [A(r), A(s)]$  with $y_i = A(x_i)$, $i=1,2$ we find 
\begin{align*}
 |A^{-1}(y_1) - A^{-1}(y_2)| = |x_1 - x_2| \leq \Bbar{ \int_{x_1}^{x_2} \frac{a(\xi)}{c_0}\dv{\xi} }\leq \frac{1}{c_0} |A(x_2) - A(x_1)|\,.
\end{align*}
This argument completes the proof. 
\end{proof}

\begin{lemma}\label{lem:BoundInverseFunction}
Suppose that $\ell_0$, $L_0\in (0,\infty)$, $y_1$, $y_2\in W^{1,\infty}([0, L_0], [0,\ell_0])$ are strictly increasing, bijective, bi-Lipschitz and that there exist $\epsilon>0$, $a>0$ with $\| y_1 - y_2 \|_{C^0([0,L_0])}\leq \epsilon$ and $y_i'\geq a$ almost everywhere on $[0,L_0]$ for $i=1,2$. Then 
\begin{align*}
 \| y_1^{-1} - y_2^{-1} \|_{C^0([0,\ell_0])} \leq \frac{\epsilon}{a}\,.
\end{align*}
\end{lemma}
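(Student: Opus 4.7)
The plan is to work pointwise: fix $x \in [0,\ell_0]$ and set $X_i = y_i^{-1}(x)$ for $i=1,2$, so that by definition $y_1(X_1) = x = y_2(X_2)$. The goal is to bound $|X_1 - X_2|$ by $\epsilon/a$, uniformly in $x$, which would give the desired $C^0$ estimate on $y_1^{-1} - y_2^{-1}$.

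Without loss of generality, I would assume $X_1 \leq X_2$ (the other case is symmetric). The key observation is to rewrite the difference using the common value $x$:
\begin{align*}
 y_2(X_2) - y_2(X_1) = y_1(X_1) - y_2(X_1).
\end{align*}
For the left-hand side, since $y_2 \in W^{1,\infty}$ with $y_2' \geq a$ almost everywhere, I would use the fundamental theorem of calculus for absolutely continuous functions to get
\begin{align*}
 y_2(X_2) - y_2(X_1) = \int_{X_1}^{X_2} y_2'(\xi)\,\dv{\xi} \geq a\,(X_2 - X_1).
\end{align*}
For the right-hand side, the hypothesis $\|y_1 - y_2\|_{C^0([0,L_0])} \leq \epsilon$ gives $|y_1(X_1) - y_2(X_1)| \leq \epsilon$. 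Combining these yields $a\,(X_2 - X_1) \leq \epsilon$, i.e., $|y_1^{-1}(x) - y_2^{-1}(x)| \leq \epsilon/a$. Taking the supremum over $x \in [0,\ell_0]$ completes the argument.

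There is no real obstacle here; the proof is a short direct comparison. The only mild subtlety is making sure the Sobolev regularity of $y_2$ justifies the integral representation used to obtain the lower bound $a(X_2-X_1)$, which follows from absolute continuity of $W^{1,\infty}$ functions on an interval. Note also that the choice to expand $y_2(X_2) - y_2(X_1)$ (rather than $y_1(X_2) - y_1(X_1)$) is not essential: the same argument with the roles of $y_1$ and $y_2$ swapped works identically, reflecting the symmetry of the statement in the two maps.
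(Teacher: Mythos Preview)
Your argument is correct and essentially identical to the paper's own proof: both fix $x$, set $X_i=y_i^{-1}(x)$, use $y_1(X_1)=y_2(X_2)$ to rewrite $|y_2(X_2)-y_2(X_1)|=|y_1(X_1)-y_2(X_1)|$, and then bound the left side below by $a|X_1-X_2|$ via the integral of $y_2'$ and the right side above by $\epsilon$. The only cosmetic difference is that the paper works with absolute values throughout instead of assuming $X_1\leq X_2$.
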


\begin{proof}
Fix $x\in [0, \ell_0]$ and define $X_i = y_i^{-1}(x)$, $i=1,2$. Then $y_1(X_1) = x = y_2(X_2)$ and 
\begin{align*}
 | y_1(X_1) - y_2(X_1) | = | y_2(X_2) - y_2(X_1) | =\Bbar{ \int_{X_1}^{X_2} y'_2(X)\dv{X} }\geq a |X_1 - X_2|
\end{align*}
and hence 
\begin{align*}
 | y_1^{-1}(x) - y_2^{-1}(x) | = |X_1 - X_2| \leq \frac{1}{a} |  y_1(X_1) - y_2(X_1) | \leq \frac{\epsilon}{a}
\end{align*}
and the assertion follows if we take the maximum in $x\in [0,\ell_0]$.
\end{proof}

Next we state a version of Picard-Lindel\"of's theorem which will be used in the existence proof.

\begin{theorem}[\mbox{\cite[Theorem~2.13]{SchechterNonlinearAnalysis2004}}]\label{PicardLindeloef}
Let $X$ be a Banach space, $x_0\in X$, $R_0$, $T_0>0$, $t_0\in \R$, and let
\begin{align*}
 B_0 = \{ x\in X\colon \| x - x_0 \|\leq R_0\}
\end{align*}
and 
\begin{align*}
 I_0 = \{ t\in \R\colon |t-t_0|\leq T_0\}\,.
\end{align*}
Assume that $g(\cdot,\cdot)$ is a continuous map of $I_0\times B_0$ into $X$ and that 
\begin{align*}
 \| g(t,x) - g(t,y) \|\leq K_0 \| x - y \|\quad x,\,y\in B_0,\,t\in I_0
\end{align*}
and that
\begin{align*}
 \| g(t,x) \| \leq M_0\quad x\in B_0,\,t\in I_0\,.
\end{align*}
Let $T_1$ be such that 
\begin{align*}
 T_1 \leq \min(T_0, R_0/M_0)\,,\quad K_0T_1<1\,.
\end{align*}
Then there exists a unique solution $x(\cdot)$ of 
\begin{align*}
 \frac{\mathrm{d}x(t)}{\mathrm{d}t} = g(t, x(t))\,,\quad |t-t_0|\leq T_1\,,\quad x(t_0) = x_0\,.
\end{align*}
\end{theorem}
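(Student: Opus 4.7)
The statement is the classical Picard-Lindel\"of existence and uniqueness theorem for ODEs in a Banach space, and the standard route is to reformulate the initial value problem as a fixed point equation and invoke Banach's contraction principle. The plan is as follows.

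Set $I_1=\{t\in\R\colon |t-t_0|\leq T_1\}$ and consider the set
\begin{align*}
 M = \bigset{x\in C(I_1;X) \colon x(t)\in B_0\text{ for all }t\in I_1}
\end{align*}
equipped with the supremum norm $\|x\|_\infty=\sup_{t\in I_1}\|x(t)\|_X$. Since $B_0$ is closed in $X$ and $C(I_1;X)$ is a Banach space, $M$ is a closed subset of a complete metric space and hence itself complete. Define the Picard operator
\begin{align*}
 \Phi:M\to C(I_1;X)\,,\quad \Phi(x)(t)=x_0 + \int_{t_0}^{t} g(s,x(s))\,\dv{s}\,.
\end{align*}
Continuity of $g$ on $I_0\times B_0$ and continuity of $x$ make the integrand a continuous $X$-valued function on $I_1$, so the integral exists and $\Phi(x)\in C(I_1;X)$. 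By the fundamental theorem of calculus for Banach-space-valued maps, fixed points of $\Phi$ in $M$ are exactly the $C^1$ solutions $x\colon I_1\to B_0$ of the initial value problem with $x(t_0)=x_0$.

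The two verifications are that $\Phi(M)\subset M$ and that $\Phi$ is a strict contraction. For the self-mapping property, the uniform bound $\|g(s,x(s))\|\leq M_0$ together with $T_1\leq R_0/M_0$ gives, for every $x\in M$ and $t\in I_1$, the estimate $\|\Phi(x)(t)-x_0\|\leq M_0|t-t_0|\leq M_0 T_1\leq R_0$, so $\Phi(x)\in M$. For the contraction property, for $x,y\in M$ and $t\in I_1$ the Lipschitz hypothesis yields
\begin{align*}
 \|\Phi(x)(t)-\Phi(y)(t)\| \leq K_0 |t-t_0| \|x-y\|_\infty \leq K_0 T_1 \|x-y\|_\infty\,,
\end{align*}
and since $K_0 T_1<1$, $\Phi$ is a strict contraction on $M$. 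Banach's fixed point theorem then provides a unique fixed point $x^*\in M$, which is the sought solution.

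There is no substantial obstacle here; the proof is essentially bookkeeping once the correct function space is chosen. The only delicate point, and the reason for the two size conditions imposed on $T_1$, is the need to balance two competing requirements: $T_1\leq R_0/M_0$ ensures that the Picard iterates stay inside $B_0$, where $g$ and its Lipschitz bound are available, while $K_0 T_1<1$ ensures that the operator contracts. Uniqueness among all continuous solutions with values in $B_0$ then follows from the uniqueness clause of Banach's theorem.
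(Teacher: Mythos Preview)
The paper does not give its own proof of this theorem; it merely quotes the result from \cite[Theorem~2.13]{SchechterNonlinearAnalysis2004} and uses it as a black box in Section~\ref{sec:existence}. Your argument is the standard and correct one: recast the initial value problem as a fixed point equation for the Picard operator on the closed set $M\subset C(I_1;X)$, use $T_1\leq R_0/M_0$ for the self-mapping property and $K_0T_1<1$ for the contraction, then apply Banach's fixed point theorem. One small point worth making explicit is that $T_1\leq T_0$ guarantees $I_1\subset I_0$, so that $g$ is indeed defined on $I_1\times B_0$; and for uniqueness you in fact get uniqueness among \emph{all} $C^1$ solutions on $I_1$, since any such solution must remain in $B_0$ by the same estimate $\|x(t)-x_0\|\leq M_0|t-t_0|\leq R_0$ applied on the maximal subinterval where $x$ stays in $B_0$.
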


\section{The hyperelastic variational problem}\label{sec:hyperelastic}
In this section, we recall the fundamental existence result for hyperelastic variational problems in one spatial dimension in \cite{Ball1981OneDimensional} and state the precise assumptions that are needed to ensure the existence of a minimizer on $g([0,L_0],t) = [g(0,t), g(L_0,t)]$, the natural configuration induced by a growth tensor $G\in B_1$ at time $t$. We drop the dependence on $t$, and consider $g=g(X)$ with $g'=\partial_X g=G$ as a function of the independent variable $X$ only. The variational problem can be formulated as follows: Suppose that $[g(0),g(L_0)]\subset \R$ is an interval with nonempty interior. It is the goal to minimize the variational integral 
 \begin{align}\label{def:IG}
  I_G[\phi] = \int_{g(0)}^{g(L_0)} W_G(z, \phi'(z))\dv{z} = \int_0^{L_0} W(X, \phi'(g(X)) G(X) \dv{X}
 \end{align}
in the class of admissible functions 
\begin{align}\label{def:AG}
\mathcal{A}_G = \nset{ \phi \in W^{1,1}(g(0),g(L_0)): I_G[\phi]<\infty,\,\phi(g(0))=0,\,\phi(g(L_0))=\ell_0 }
\end{align}
with $\ell_0>0$ given. Here $W_G$ denotes the free energy of the body in the natural configuration which, by assumption, is given by $W_G(z, \cdot) = W(g^{-1}(z),\cdot)$ for $z\in [g(0),g(L_0)]$. In the following we state the assumptions concerning $W$ that are needed in the existence and regularity results for the associated energy $W_G$. They include fundamental assumptions in continuum mechanics and do not impose upper bounds on the stored energy density. In Remark~\ref{rem:relationWG} we indicate the implications of the assumptions on $W$ on the function $W_G$.
\begin{itemize}[leftmargin=.4in]
 \item [(W1)] Regularity: $W\in C^0([0,L_0]\times (0,\infty);[0,\infty))$, $W(X,1)=0$ for all $X\in [0,L_0]$, and for all $X\in [0,L_0]$ fixed, $W(X, \cdot)\in C^1(0,\infty)$ with $\partial_p W\in C^0([0,L_0]\times (0,\infty))$.
 
 \item [(W2)] Structure and growth conditions: There exists a convex function $\theta:(0,\infty)\to \R$ with $\theta(p)\nearrow \infty$ as $p\searrow 0$ and $\theta(p)/p \nearrow \infty$ as $p\nearrow \infty$ with the following property: For all $X\in [0,L_0]$ fixed, $W(X,\cdot)$ is strictly convex with $W(X,p)\geq \theta(p)$ on $(0,\infty)$.

 \item [(W3)] For all $\lambda\in (0,\infty)$: $\int_0^{L_0} W(X, \lambda) \dv{X}<\infty$. 
 
 \item [(W4)] Additional regularity for $W$: $\partial_p W\in C^1( [0,L_0]\times (0,\infty))$ and $\partial_{pp} W>0$.
\end{itemize}

As a first implication of the foregoing assumptions we state an important consequence which follows from the uniform lower bound $\theta$ for $W(X, \cdot)$ and will serve as an a~priori estimate. 

\begin{lemma}\label{lem:algebraic}
Suppose that $W$ satisfies (W1)--(W2), that $S\in \R$, $E\subset [0,L_0]$, $E\neq \emptyset$, and that $\pi:E\to (0,\infty)$ is a function with the following property: For all $X\in E\colon \partial_p W(X, \pi(X))=S$. Then there exist constants $P_0$, $P_1\in (0,\infty)$ which depend only on $\theta$ and $S$ such that for all $X\in E\colon \pi(X) \in [P_0,P_1]$. Moreover, if $\Sigma_0$, $\Sigma_1\in \R$ with $\Sigma_0<\Sigma_1$, then $P_0$ and $P_1$ can be chosen uniformly for $S\in [\Sigma_0, \Sigma_1]$ and depend only on $\theta$, $\Sigma_0$ and $\Sigma_1$.
\end{lemma}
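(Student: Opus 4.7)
The plan is to convert the first-order condition $\partial_p W(X, \pi(X)) = S$ into an $X$-independent inequality by using the convexity of $W(X, \cdot)$ together with the normalization $W(X, 1) = 0$, and then to read off the desired two-sided bound on $\pi(X)$ from the blow-up of the barrier $\theta$ at $0$ and its super-linear growth at $\infty$.

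First, I would apply the supporting hyperplane inequality for the convex function $W(X, \cdot)$ at $p = \pi(X)$, evaluated at $q = 1$:
\begin{align*}
 W(X, 1) \geq W(X, \pi(X)) + \partial_p W(X, \pi(X))\bigl(1 - \pi(X)\bigr)\,.
\end{align*}
Inserting $W(X, 1) = 0$ from (W1), the hypothesis $\partial_p W(X, \pi(X)) = S$, and the lower bound $W(X, \pi(X)) \geq \theta(\pi(X))$ from (W2) rearranges this to the $X$-free inequality
\begin{align*}
 \theta(\pi(X)) \leq S\bigl(\pi(X) - 1\bigr)\,, \qquad X \in E\,.
\end{align*}

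Next, I would introduce the auxiliary function $F_S(p) = \theta(p) - S(p-1)$, so that the previous inequality says $\pi(X)$ lies in the sublevel set $\{p>0 : F_S(p) \leq 0\}$. The assumption $\theta(p) \to \infty$ as $p \searrow 0$ forces $F_S(p) \to \infty$ as $p \searrow 0$ (since $S(p-1)$ stays bounded near $0$), hence there exists $P_0 = P_0(S,\theta) > 0$ with $F_S > 0$ on $(0, P_0)$. The assumption $\theta(p)/p \to \infty$ as $p \to \infty$ gives $F_S(p)/p \to \infty$ (since $S(p-1)/p \to S$), hence there exists $P_1 = P_1(S,\theta) < \infty$ with $F_S > 0$ on $(P_1, \infty)$. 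Therefore $\pi(X) \in [P_0, P_1]$ for every $X \in E$.

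For the uniform version on $S \in [\Sigma_0, \Sigma_1]$, I would set $M = \max(|\Sigma_0|, |\Sigma_1|)$, use the $S$-uniform bound $|S(p-1)| \leq M(p+1)$, and apply the same two limit arguments to the $S$-independent function $p \mapsto \theta(p) - M(p+1)$ in place of $F_S$; this delivers $P_0, P_1$ depending only on $\theta, \Sigma_0, \Sigma_1$. I do not anticipate any real obstacle, since the entire argument is powered by a single convexity inequality and the two asymptotic conditions in (W2) are exactly what is needed to compactify the admissible range on both sides.
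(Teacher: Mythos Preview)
Your proof is correct and rests on the same core ideas as the paper's proof --- convexity of $W(X,\cdot)$, the normalization $W(X,1)=0$, and the two asymptotic conditions on $\theta$ --- but you package them more cleanly. The paper splits into the three cases $S=0$, $S<0$, $S>0$; in each case it first pins down whether $\pi(X)$ lies above or below $1$ (so one bound is immediate), and then obtains the other bound by integrating $\partial_p W(X,\cdot)$ between $\pi(X)$ and $1$, arriving at $\theta(\pi(X))\leq |S|$ when $S<0$ and $\theta(\pi(X))\leq \pi(X)S$ when $S>0$. Your single supporting-hyperplane inequality $\theta(\pi(X))\leq S(\pi(X)-1)$ subsumes both of these (the paper's inequalities are the obvious relaxations of yours on each side of $p=1$) and removes the need for a case distinction. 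The trade-off is minor: the paper's argument yields somewhat more explicit formulas for $P_0$ and $P_1$ in terms of level sets of $\theta$ and $\theta(p)/p$, while your argument is shorter and treats the uniform statement on $[\Sigma_0,\Sigma_1]$ more transparently via the single majorant $\theta(p)-M(p+1)$.
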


\begin{proof}
If $S=0$, then by (W1), (W2) $\pi\equiv 1$ on $E$ and it suffices to choose $P_0\leq 1$ and $P_1\geq 1$. Suppose next that $S<0$. Then by (W1), (W2) $\pi<1$ on $E$ and the upper bound follows with $P_1\geq 1$. To prove the lower bound, by (W1), (W2) for all $X\in E$ the derivative $\partial_p W(X, \cdot)$ is negative and strictly increasing on $[\pi(X),1)$ with $W(X,1)=0$ and by (W1), (W2), 
\begin{align*}
 \theta(\pi(X)) & \leq W(X, \pi(X)) = W(X, \pi(X)) - W(X, 1) = \int_1^{\pi(X)} \partial_p W(X, s) \dv{s} \\ & = \int_{\pi(X)}^{1} | \partial_p W(X, s)|\dv{s}\leq (1-\pi(X)) | \partial_p W(X, \pi(X))| \leq |S|\,.
\end{align*}
By (W2), $m=\min_{(0,\infty)}\theta \in(-\infty, 0]$, the set $M = \{ p\in (0,\infty): \theta(p)=m\}$ is bounded from below and we may define $p_0 = \min M>0$. By convexity, $\theta$ is strictly decreasing on $(0,p_0]$. In fact, if $a,b\in (0,p_0)$ with $a<b$ and $\theta(a)=\theta(b)>\theta(p_0)$, there exists a $\lambda\in (0,1)$ with
\begin{align*}
 \theta(b) = \theta(\lambda a + (1-\lambda)p_0) \leq \lambda \theta(a) + (1-\lambda) \theta(p_0) <\theta(b)\,,
\end{align*}
which yields a contradiction. Define $P_0(\theta, S) = (\theta|_{(0,p_0]})^{-1}(|S|)$. Since $\theta$ is strictly decreasing on $(0,p_0)$, $\theta>|S|$ on $(0, P_0(\theta, S))$ and hence $\pi(X) \geq P_0(\theta, S)$. Note that $P_0(\theta, S)$ is decreasing as a function of $S$ on $(-\infty, 0]$.

If $S>0$ then by (W1), (W2) $\pi>1$ on $E$ and it suffices to choose $P_0\leq 1$ to guarantee the lower bound. By (W1), (W2), for all $X\in E$ the derivative $\partial_p W(X, \cdot)$ is positive and strictly increasing on $(1, \pi(X)]$ with $W(X,1)=0$ and hence 
\begin{align*}
 \theta(\pi(X)) & \leq W(X, \pi(X)) = W(X, \pi(X)) - W(X, 1) = \int_1^{\pi(X)} \partial_p W(X, s) \dv{s} \\ & \leq (\pi(X)-1) \partial_p W(X, \pi(X)) \leq \pi(X) S\,.
\end{align*}
Since by (W2) $\theta(p)/p\nearrow \infty$ for $p\nearrow\infty$ and since $\theta(p)/p$ is continuous on $(0,\infty)$, we may define 
\begin{align*}
 P_1(\theta, S)  = \sup\{ p \in [1,\infty): \theta(p)/p=S\}<\infty\,.
\end{align*}
Since $\theta(\pi(X))/\pi(X) \leq S$, $\pi(X)\leq P_1(\theta, S)$ which establishes the upper bound. Note that $P_1(\theta, S)$ is increasing in $S$ on $[1,\infty)$. 

Finally, assume that $\Sigma_0$, $\Sigma_1\in \R$ with $\Sigma_0<\Sigma_1$. Define
\begin{align*}
 P_0(\theta, \Sigma_0, \Sigma_1) = P_0(\theta, \min\{\Sigma_0, 1\})\,,\quad P_1(\theta, \Sigma_0, \Sigma_1) = P_1(\theta, \max\{\Sigma_1, 1\})\,.
\end{align*}
This choice of $P_0$ and $P_1$ provides uniform bounds for $S\in [\Sigma_0, \Sigma_1]$ as asserted. 
\end{proof}

Now, we explore the consequences of (W4) in the subsequent lemma. The additional regularity is required in the case that the evolution equation for $G$ depends on the nutrients since the formula for the time-dependent diffusion coefficient introduces the term $\phi_G'\circ \phi_G^{-1}$ for which regularity can only be deduced from additional assumptions on $W$.

We first observe that assumption (W2) implies that for all $X\in [0,L_0]$ the map $\partial_p W(X, \cdot): (0,\infty)\to \R$, $p\mapsto \partial_p W(X, p)$ is strictly increasing and bijective. Thus, there exists a uniquely defined function $\pi_0:[0,L_0]\times \R \to (0,\infty)$ such that for all $X\in [0,L_0]$ and for all $S\in \R$ the equation $\partial_p W(X, \pi_0(X, S)) = S$ holds. We use the notation 
\begin{align*}
 \pi_0(X, S) = ( \partial_p W(X, \cdot))^{-1}(S) \,.
\end{align*}
The theorem on implicit functions implies that $\pi_0$ is differentiable in $X$. Before we prove this in detail, we note that for all $X$ the function $S\mapsto \pi_0(X,S)$ is continuous. To see this, suppose there were a sequence $(S_k)_{k\in \N}$ converging to $S\in \R$ for which $\pi_0(X, S_k)$ does not converge to $\pi_0(X,S)$. Since $S_k$ is bounded, Lemma~\ref{lem:algebraic} implies that $\pi_0(X, S_k)$ is uniformly bounded and we may assume without loss of generality that the sequence $\pi_0(X, S_k)$ converges to $\bar\pi \neq \pi_0(X, S)$. However, the continuity of $\partial_p W$ implies 
\begin{align*}
 \partial_p W(X, \bar \pi) = \lim_{k\to \infty} \partial_p W(X, \pi_0(X, S_k)) = \lim_{k\to \infty} S_k = S
\end{align*}
and, by the uniqueness of the solutions of this equation, $\pi_0(X, S) = \bar \pi$. This observation leads to a contradiction since this argument can be applied to any subsequence.

\begin{lemma}\label{lem:W5}
Suppose that $W$ satisfies (W1)--(W4). Then
\begin{align*}
\pi_0: (X,S)\mapsto \pi_0(X,S)= (\partial_p W(X,\cdot))^{-1}(S)
\end{align*}
is uniformly separately Lipschitz continuous on compact intervals. That is, for all $S_0$, $S_1\in \R$ with $S_0<S_1$ there exist constants $L_{\partial_p W^{-1},X}$ and $L_{\partial_p W^{-1},S}$ which depend only on the derivatives of $\partial_p W$ such that for all $X$, $\widetilde{X}\in [0,L_0]$, $S$, $\widetilde{S}\in [S_0, S_1]$
 \begin{align*}
 | \pi_0(X,S) - \pi_0(\widetilde{X},S)|& \leq L_{\partial_p W^{-1},X}(S_0, S_1)|X - \widetilde{X}|\,,\\ 
 | \pi_0(X,S) - \pi_0(X,\widetilde{S})| & \leq L_{\partial_p W^{-1},S}(S_0, S_1)|S - \widetilde{S}|\,.
 \end{align*}
\end{lemma}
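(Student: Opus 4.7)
The plan is to localize the problem to a compact rectangle in $[0,L_0]\times(0,\infty)$ so that the derivatives supplied by (W4) become uniformly controlled, and then to extract both Lipschitz estimates from the defining identity $\partial_p W(X,\pi_0(X,S))=S$ by standard fundamental-theorem-of-calculus arguments.

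First I would apply Lemma~\ref{lem:algebraic} with the interval $[S_0,S_1]$: this provides $P_0=P_0(\theta,S_0,S_1)>0$ and $P_1=P_1(\theta,S_0,S_1)<\infty$ with
\begin{equation*}
 \pi_0(X,S)\in[P_0,P_1]\qquad\text{for all }X\in[0,L_0],\,S\in[S_0,S_1].
\end{equation*}
Set $K=[0,L_0]\times[P_0,P_1]$. By (W4), $\partial_p W\in C^1(K)$, hence both $\partial_{pp}W$ and $\partial_X\partial_p W$ are continuous on $K$. Since $\partial_{pp}W>0$ is continuous on the compact set $K$, it attains a positive minimum $m>0$, and $|\partial_X\partial_p W|$ attains a finite maximum $M<\infty$. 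These are the only constants needed.

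For the Lipschitz estimate in $S$, I would fix $X\in[0,L_0]$ and $S,\widetilde S\in[S_0,S_1]$ and use the identity $\partial_p W(X,\pi_0(X,\cdot))=\id$ to write
\begin{equation*}
 S-\widetilde S=\partial_p W(X,\pi_0(X,S))-\partial_p W(X,\pi_0(X,\widetilde S))=\int_{\pi_0(X,\widetilde S)}^{\pi_0(X,S)}\partial_{pp}W(X,q)\,\dv{q}.
\end{equation*}
Since both endpoints of integration lie in $[P_0,P_1]$ and $\partial_{pp}W\geq m$ on $K$, this yields $|S-\widetilde S|\geq m\,|\pi_0(X,S)-\pi_0(X,\widetilde S)|$, and therefore $L_{\partial_p W^{-1},S}(S_0,S_1)=1/m$ suffices.

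For the estimate in $X$, I would fix $S\in[S_0,S_1]$ and $X,\widetilde X\in[0,L_0]$, and from $\partial_p W(X,\pi_0(X,S))=\partial_p W(\widetilde X,\pi_0(\widetilde X,S))$ split the difference using the triangle trick:
\begin{equation*}
\partial_p W(\widetilde X,\pi_0(\widetilde X,S))-\partial_p W(\widetilde X,\pi_0(X,S))=\partial_p W(X,\pi_0(X,S))-\partial_p W(\widetilde X,\pi_0(X,S)).
\end{equation*}
The right side is bounded by $M\,|X-\widetilde X|$ by integrating $\partial_X\partial_p W$ along the horizontal segment in $K$ at height $\pi_0(X,S)\in[P_0,P_1]$, while the left side is bounded below in modulus by $m\,|\pi_0(X,S)-\pi_0(\widetilde X,S)|$ by the same computation as in the $S$-argument above. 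Rearranging gives the second estimate with $L_{\partial_p W^{-1},X}(S_0,S_1)=M/m$.

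I do not expect a real obstacle here: the whole point is that Lemma~\ref{lem:algebraic} has already done the nontrivial work of keeping $\pi_0$ away from $0$ and $\infty$, so that the compactness argument to extract $m$ and $M$ is routine. The only mild subtlety is to remember that both constants depend on $S_0,S_1$ through $P_0$ and $P_1$, which is exactly the dependence asserted in the statement.
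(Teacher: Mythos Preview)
Your proof is correct and cleaner for the Lipschitz conclusion as stated: you localize via Lemma~\ref{lem:algebraic} to the compact box $K=[0,L_0]\times[P_0,P_1]$, extract the uniform constants $m=\min_K\partial_{pp}W>0$ and $M=\max_K|\partial_X\partial_pW|$, and then read off both Lipschitz bounds directly from the integral form of the defining identity $\partial_pW(X,\pi_0(X,S))=S$, without ever establishing differentiability of $\pi_0$.

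The paper takes a slightly longer route: for the $X$-variable it invokes the implicit function theorem to show $\pi_0(\cdot,S)\in C^1(0,L_0)$ with the explicit formula $\partial_X\pi_0=-\partial_X\partial_pW/\partial_{pp}W$, bounds this derivative via the same compactness argument, and then applies the mean value theorem; for the $S$-variable it writes out the difference quotient, inverts the integral of $\partial_{pp}W$, and passes to the limit (using the continuity of $\pi_0(X,\cdot)$ established just before the lemma) to obtain $\partial_S\pi_0=1/\partial_{pp}W$. Your approach buys economy---no IFT, no limit passage---while the paper's approach buys more, namely the $C^1$ regularity of $\pi_0$ together with the explicit derivative formulas. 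That extra information is in fact used downstream in the proof of Lemma~\ref{lem:LipS}, where $\partial_S\pi_0(X,S)=[\partial_{pp}W(X,\pi_0(X,S))]^{-1}$ enters the computation of $\partial\Phi/\partial S$; so while your argument fully proves the lemma as stated, be aware that the paper is implicitly relying on the stronger conclusion its own proof provides.
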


\begin{proof}
We first consider $S\in \R$ fixed and prove that the map $\pi_0(\cdot, S):(0,L_0)\to (0,\infty)$ is differentiable. Suppose that $(X_0, p_0)\in (0,L_0)\times (0,\infty)$ is a solution of the equation $\partial_p W(X,p)=S$. Since $\partial_p W\in C^1((0,L_0)\times (0,\infty))$ with $\partial_{pp} W>0$ we may apply the theorem on implicit functions. Thus there exists an open neighborhood $U$ of $X_0$ in $(0,L_0)$, an open neighborhood $V$ of $p_0$ in $(0,\infty)$, and a differentiable function $p(\cdot, S):U\to V$ such that on $U\times V$ the equation $\partial_p W(X, p)=S$ holds if and only if $p=p(X, S)$. Since the solution of this equation with $S$ fixed is uniquely determined, necessarily $p(\cdot, S) = \pi_0(\cdot, S)$ on $U$ and since $X_0\in (0,L_0)$ is arbitrary, $p(\cdot, S)=\pi_0(\cdot, S)\in C^1(0,L_0)$. For $S\in \R$ fixed, Lemma~\ref{lem:algebraic} implies with $E=(0,L_0)$ and $\pi(\cdot) = \pi_0(\cdot, S)$ that the function $\pi_0(\cdot, S)$ is uniformly bounded with values in $[P_0,P_1]$. The derivative with respect to $X$ follows from implicit differentiation, 
\begin{align*}
 \partial_X \pi_0(X,S) = - \partial_X \partial_p W(X, \pi_0(X, S)) / \partial_{pp}W(X, \pi_0(X, S))
\end{align*}
and since $\pi_0(\cdot, S)$ is uniformly bounded, there exists a constant $c_0>0$ with $\partial_{pp} W\geq c_0$ on $[0,L_0]\times [P_0,P_1]$. Therefore, $\partial_X \pi_0(\cdot, S)$ is uniformly bounded on $(0,L_0)$ and these bounds imply that the Cauchy criterion for the existence of the limits $\lim_{X\to 0}\pi_0(X, S)$ and $\lim_{X\to L_0}\pi_0(X,S)$ is satisfied. Consequently, $\pi_0(\cdot, S)\in C^0([0,L_0])$. The local Lipschitz continuity of $\pi_0(\cdot, S)$ in the first argument follows from the mean value theorem and the assumption $\partial_p W\in C^1([0,L_0]\times (0,\infty))$ since the bounds in Lemma~\ref{lem:algebraic} are uniform for $S$ in compact intervals.

In the discussion preceding this lemma we already proved that $\pi_0(X, \cdot)$ is continuous as a function in the second argument. Thus, it remains to prove differentiability in $S$ with uniform bounds. Fix $X\in [0,L_0]$, $S\in \R$ and $\Delta S\neq 0$ with $\partial_p W(X, \pi_0(X,S)) = S$, $\partial_p W(X, \pi_0(X, S+\Delta S)) = S+ \Delta S$. Then 
\begin{align*}
\Delta S & = \partial_p W(X, \pi_0(X,S+\Delta S)) - \partial_p W(X, \pi_0(X, S))  \\ & = \int_0^1 \partial_{pp} W(X, t\pi_0(X, S+\Delta S) + (1-t) \pi_0(X,S)) (\pi_0(X,S+\Delta S) - \pi_0(X,S)) \dv{t} 
\end{align*} 
and hence, in view of $\partial_{pp} W\geq c_0>0$,
\begin{align*}
 \frac{\pi_0(X, S+\Delta S) - \pi_0(X,S))}{\Delta S} = \Bsqb{ \int_0^1 \partial_{pp} W(X, t\pi_0(X, S+\Delta S) + (1-t) \pi_0(X,S)) \dv{t} }^{-1}\,.
\end{align*}
By the continuity of $\pi_0$ in $S$ we may pass to the limit $\Delta S\to 0$ and obtain 
\begin{align*}
 \partial_S \pi_0(X, S) = \Bsqb{\partial_{pp} W(X,\pi_0(X,S)) }^{-1}\,.
\end{align*}
The explicit formula implies that $\partial_S \pi_0$ is uniformly bounded for $X\in [0,L_0]$ and $S\in \R$ on compact intervals.
\end{proof}

\begin{remark}\label{rem:relationWG}
The subsequent existence theorem in Theorem~\ref{thm:BallExistence1d} is applied to the functional $I_G$ defined via the energy density $W_G(z,\cdot) = W(g^{-1}(z),\cdot)$ where $g$ is bi-Lipschitz. In fact, if (W1)--(W3) hold for $W$, then the induced function $W_G$ satisfies (W1)--(W3) as well with $[0,L_0]$ replaced by $[g(0), g(L_0)]$. Moreover, since the change of variables concerns only the independent variables, (W2) is satisfied with a fixed function $\theta$ independent of $g$. Assumption (W3) ensures that the class of admissible function $\mathcal{A}_G$ is not empty. For $G$ the identity map, we define $\mathcal{A}=\nset{ \phi \in W^{1,1}(0,L_0): I[\phi]<\infty,\,\phi(0)=0,\,\phi(L_0)=\ell_0 }$.
\end{remark}

\begin{theorem}[\cite{Ball1981OneDimensional}]\label{thm:BallExistence1d}
 Suppose that $W$ satisfies (W1)--(W3). Then there exists a function $\phi\in \mathcal{A}$ which minimizes $I$ in $\mathcal{A}$. Moreover, $\phi\in C^1([0,L_0])$ with $\min_{[0,L_0]}\phi' > 0$ and the Euler-Lagrange equation 
 \begin{align*}
  \frac{\mathrm{d}}{\mathrm{d}X} (\partial_p W(X,\phi'(X)))=0
 \end{align*}
holds on $[0,L_0]$. In particular, the stress $\partial_p W(X, \phi'(X))$ is constant on $[0,L_0]$.
\end{theorem}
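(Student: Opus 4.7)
The plan is to apply the direct method of the calculus of variations. The admissible class $\mathcal{A}$ is nonempty since the affine map $X \mapsto (\ell_0/L_0) X$ belongs to it by (W3). Let $(\phi_k) \subset \mathcal{A}$ be a minimizing sequence. The coercivity $W(X, p) \geq \theta(p)$ from (W2), combined with $\theta(p)/p \to \infty$ as $p \to \infty$, is exactly the de~la~Vall\'ee--Poussin criterion and therefore yields equi-integrability of $(\phi_k')$ in $L^1(0, L_0)$. By the Dunford--Pettis theorem, a subsequence satisfies $\phi_k' \rightharpoonup \psi$ weakly in $L^1$; testing against the indicators $\chi_{[0,X]} \in L^\infty$ shows $\phi_k(X) \to \int_0^X \psi(U)\dv{U} =: \phi(X)$ pointwise, and equi-integrability combined with $|\phi_k(X_1)-\phi_k(X_2)| \leq \int_{X_1}^{X_2} |\phi_k'|\dv{U}$ gives equicontinuity, hence uniform convergence $\phi_k \to \phi$ by Arzel\`a--Ascoli. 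In particular the boundary values pass to the limit and $\phi \in \mathcal{A}$.

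Since $W$ is continuous on $[0, L_0] \times (0, \infty)$, nonnegative, and $W(X, \cdot)$ is convex, the functional $I$ is sequentially weakly lower semicontinuous on $W^{1,1}$ by Ioffe's theorem (equivalently, one may pass to convex combinations via Mazur's lemma and then apply Fatou). Thus $I[\phi] \leq \liminf_k I[\phi_k]$, proving that $\phi$ is a minimizer. Finiteness of $I[\phi]$ together with the blow-up $\theta(p) \to \infty$ as $p \to 0^+$ immediately forces $\phi' > 0$ almost everywhere, for any set on which $\phi'$ vanished would contribute infinite energy.

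The main technical obstacle in upgrading $\phi$ to a $C^1$ solution of the Euler--Lagrange equation is the absence of any upper bound on $W$ in (W1)--(W3): the standard test-function perturbations $\phi + \varepsilon \eta$ with $\eta \in C_c^\infty$ are not a priori admissible, because $\phi' + \varepsilon \eta'$ may enter regions where we have no quantitative control on $W$ or on $\partial_p W$. To circumvent this, I would use inner variations localized at Lebesgue points $X_1, X_2 \in (0, L_0)$ of $\phi'$: construct, for small $\delta > 0$, an admissible competitor obtained by transferring an infinitesimal amount of stretch from a small neighborhood of $X_1$ to one of $X_2$ while preserving the integral $\int_0^{L_0} \phi' \dv{X} = \ell_0$, and staying in a compact subinterval of $(0, \infty)$ on which the continuity of $\partial_p W$ from (W1) delivers uniform estimates. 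Differentiating $I$ along this one-parameter family at the minimizer and using minimality forces $\partial_p W(X_1, \phi'(X_1)) = \partial_p W(X_2, \phi'(X_2))$ for almost every such pair, so there is a constant $S \in \R$ with $\partial_p W(X, \phi'(X)) = S$ almost everywhere.

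By the strict monotonicity of $\partial_p W(X, \cdot)$ guaranteed by (W2), inverting gives $\phi'(X) = \pi_0(X, S)$ almost everywhere. Lemma~\ref{lem:algebraic} applied with $E = (0, L_0)$ immediately yields a uniform sandwich $0 < P_0 \leq \phi'(X) \leq P_1 < \infty$, and the continuity of $\pi_0(\cdot, S)$ established in the paragraph preceding Lemma~\ref{lem:W5} promotes $\phi'$ to a function in $C^0([0, L_0])$. Hence $\phi \in C^1([0, L_0])$ with $\min_{[0,L_0]} \phi' \geq P_0 > 0$, the map $X \mapsto \partial_p W(X, \phi'(X)) \equiv S$ is constant, and the Euler--Lagrange equation $\frac{\mathrm{d}}{\mathrm{d}X}(\partial_p W(X, \phi'(X))) = 0$ holds on $[0, L_0]$, completing the proof.
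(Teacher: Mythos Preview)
Your proof is essentially a reconstruction of Ball's argument, which is exactly what the paper cites rather than reproves: the paper's ``proof'' merely checks that (W1)--(W3) imply Ball's hypotheses (H1)--(H6) and then quotes the conclusions. The direct-method portion (de~la~Vall\'ee--Poussin, Dunford--Pettis, weak lower semicontinuity via convexity) and the inner-variation strategy for obtaining the Euler--Lagrange identity without upper growth bounds are correct in outline and match the cited source.

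There is one genuine inaccuracy in your final step. You appeal to ``the continuity of $\pi_0(\cdot, S)$ established in the paragraph preceding Lemma~\ref{lem:W5}'' to promote $\phi'$ to $C^0([0,L_0])$. That paragraph, however, proves continuity of $S \mapsto \pi_0(X,S)$ with $X$ \emph{fixed}, not continuity in $X$; the latter is obtained only inside Lemma~\ref{lem:W5} via the implicit function theorem, and that lemma requires (W4). Since Theorem~\ref{thm:BallExistence1d} assumes only (W1)--(W3), you cannot invoke it. The repair is short and uses only what is available: once $\partial_p W(X,\phi'(X)) = S$ holds for a.e.\ $X$ and Lemma~\ref{lem:algebraic} confines $\phi'$ to a compact interval $[P_0,P_1]$, take any sequence $X_k \to X$ in $[0,L_0]$ along Lebesgue points, pass to a subsequence with $\phi'(X_k) \to q \in [P_0,P_1]$, and use the joint continuity of $\partial_p W$ from (W1) to conclude $\partial_p W(X,q) = S$; strict monotonicity of $\partial_p W(X,\cdot)$ from (W2) then forces $q = \phi'(X)$, so the full sequence converges and $\phi'$ extends to a continuous function on $[0,L_0]$. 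This is precisely the mechanism in Ball's Theorem~2, and it closes the gap without (W4).
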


\begin{proof}
The assertion follows from Theorems~1 and~2 in \cite{Ball1981OneDimensional}. For completeness, we verify the assumptions (H) in \cite{Ball1981OneDimensional}: (H1), $W$ being a Carath\'eodory function, is weaker than our assumption (W1); the growth conditions (H2), $W(X,p)\to \infty$ for $p\to 0^+$ and almost all $X\in [0,L_0]$, and (H3), $W(X,p)\geq \psi(p)$ for all $p$ and almost all $X$ with $\psi$ convex and $\psi(p)/p\to \infty$ for $p\to\infty$, follow from (W2); (H4) is (W3); (H6), $W(X,\cdot)$ is $C^1$ for almost all $X\in [0,L_0]$ and $W_p$ is a Carath\'eodory function, is contained in (W1); finally the additional assumptions in Theorem~2 needed to conclude that $\phi\in C^1([0,L_0])$ concerning continuity and strict convexity are included in (W1) and (W2). Since $\phi\in C^1([0,L_0])$ the Euler-Lagrange equation holds pointwise. In fact, the proof in~\cite{Ball1981OneDimensional}  shows that there exists a constant $C$ with $\partial_p W(X,\phi'(X))=C$ a.e.\ on $[0,L_0]$. Since $\phi\in C^1([0,L_0])$ and $\partial_p W\in C^0((0,L_0)\times (0,\infty))$, the equation holds in $[0,L_0]$. 
\end{proof}

Theorem~\ref{thm:BallExistence1d} corresponds to the case $G\equiv 1$ in the following Corollary.

\begin{corollary}\label{cor:ExistenceIG}
Suppose that $G\in B_1=B(G_0,R_1)$ with~\eqref{boundsonG} and that $W$ satisfies (W1)--W(3). Then there exists a minimizer $\phi_G$ of $I_G$ in~\eqref{def:IG} in the class $\mathcal{A}_G$ in~\eqref{def:AG}. Moreover, $\phi_G\in C^1([g(0),g(L_0)])$ with $\min_{[g(0),g(L_0)]}\phi_G' > 0$ and the Euler-Lagrange equation 
 \begin{align*}
  \frac{\mathrm{d}}{\mathrm{d}z} (\partial_p W_G(z, \phi_G'(z)))=0
 \end{align*}
holds on $[g(0),g(L_0)]$. In particular, the stress $\partial_p W_G(z, \phi_G'(z))$ is constant on $[g(0),g(L_0)]$.
\end{corollary}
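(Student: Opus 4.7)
The plan is to view this corollary as a direct application of Theorem~\ref{thm:BallExistence1d} with $W_G$ and $[g(0),g(L_0)]$ in the roles of $W$ and $[0,L_0]$. The task therefore reduces to verifying (W1)--(W3) for $W_G$ on the transformed interval, exactly as anticipated in Remark~\ref{rem:relationWG}. The key enabling observation is that $g$ is bi-Lipschitz: since $G \in B_1$ satisfies $\Gamma_0 \leq G \leq \Gamma_1$ by~\eqref{boundsonG}, Lemma~\ref{lem:PropertiesLip}(d) applied to $g(X) = \int_0^X G(U)\,\dv{U}$ yields Lipschitz constants $\Gamma_1$ for $g$ and $1/\Gamma_0$ for $g^{-1}$, and in particular $g^{-1} \in C^0([g(0),g(L_0)];[0,L_0])$.

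First I would transfer the assumptions to $W_G$. Regularity (W1) follows from the continuity of $g^{-1}$ together with (W1) for $W$: one gets $W_G \in C^0([g(0),g(L_0)]\times(0,\infty))$, $W_G(z,1) = W(g^{-1}(z),1) = 0$, and $\partial_p W_G(z,p) = \partial_p W(g^{-1}(z),p)$ continuous on $[g(0),g(L_0)]\times(0,\infty)$. The structure condition (W2) is immediate with the same convex function $\theta$: for each $z$, the map $W_G(z,\cdot) = W(g^{-1}(z),\cdot)$ is strictly convex and dominates $\theta$, since $g^{-1}(z) \in [0,L_0]$. Finally, (W3) for $W_G$ follows from the change of variables $z = g(X)$, $\dv{z} = G(X)\,\dv{X}$, combined with $G \leq \Gamma_1$:
\[
\int_{g(0)}^{g(L_0)} W_G(z,\lambda)\,\dv{z} = \int_0^{L_0} W(X,\lambda)\, G(X)\,\dv{X} \leq \Gamma_1 \int_0^{L_0} W(X,\lambda)\,\dv{X} < \infty\,.
\]

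With (W1)--(W3) verified for $W_G$, Theorem~\ref{thm:BallExistence1d} produces a minimizer $\phi_G \in C^1([g(0),g(L_0)])$ of $I_G$ in $\mathcal{A}_G$ with $\min_{[g(0),g(L_0)]}\phi_G' > 0$ and the pointwise Euler--Lagrange equation, which encodes the constancy of the stress $\partial_p W_G(z,\phi_G'(z))$. Non-emptiness of $\mathcal{A}_G$ is checked by testing with the affine interpolant $z \mapsto \ell_0(z-g(0))/(g(L_0)-g(0))$, whose energy is finite by the displayed estimate applied to $\lambda = \ell_0/(g(L_0)-g(0))$. The only (mild) obstacle is really one of bookkeeping: ensuring that the underlying result from~\cite{Ball1981OneDimensional}, although stated on $[0,L_0]$ in Theorem~\ref{thm:BallExistence1d}, applies verbatim on an arbitrary bounded interval, which it does since the proof depends only on the hypotheses (W1)--(W3) and the boundary values $0$ and $\ell_0$, not on the particular left endpoint or length of the domain.
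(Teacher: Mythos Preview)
Your proof is correct and follows exactly the approach the paper takes: the paper's proof simply refers to Remark~\ref{rem:relationWG}, which records (without the details you supply) that $g$ is bi-Lipschitz and that (W1)--(W3) transfer from $W$ to $W_G$ on $[g(0),g(L_0)]$, so that Theorem~\ref{thm:BallExistence1d} applies. Your argument spells out precisely these verifications and is therefore a fuller version of the paper's one-line proof.
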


\begin{proof}
This follows from the observations concerning $W_G$ in Remark~\ref{rem:relationWG}.
\end{proof}

\begin{corollary}
Suppose that the assumptions in Theorem~\ref{thm:BallExistence1d} or Corollary~\ref{cor:ExistenceIG} hold. Then the corresponding variational problems have unique minimizers. 
\end{corollary}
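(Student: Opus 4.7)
The plan is to derive uniqueness from the strict convexity of $W(X,\cdot)$ encoded in assumption (W2). Existence of a minimizer is furnished by Theorem~\ref{thm:BallExistence1d} in the case $G\equiv 1$ and by Corollary~\ref{cor:ExistenceIG} for a general $G\in B_1$; thus it only remains to rule out two distinct minimizers. I will argue first for $I$ on $\mathcal{A}$; the case of $I_G$ on $\mathcal{A}_G$ is identical because $W_G(z,\cdot)=W(g^{-1}(z),\cdot)$ inherits strict convexity in its second argument by Remark~\ref{rem:relationWG}.

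Suppose $\phi_1,\phi_2\in\mathcal{A}$ are both minimizers with common minimum value $m$, and form the convex combination $\phi=\tfrac{1}{2}(\phi_1+\phi_2)$. Since the boundary data are affine in $\phi$, clearly $\phi(0)=0$ and $\phi(L_0)=\ell_0$, and $\phi\in W^{1,1}(0,L_0)$. The pointwise convexity of $W(X,\cdot)$ gives
$$W\bigl(X,\tfrac{1}{2}(\phi_1'(X)+\phi_2'(X))\bigr)\leq \tfrac{1}{2}\bigl(W(X,\phi_1'(X))+W(X,\phi_2'(X))\bigr)$$
almost everywhere on $[0,L_0]$, so after integration $I[\phi]\leq m<\infty$. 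In particular $\phi\in\mathcal{A}$, so by admissibility $I[\phi]\geq m$ as well, and equality must hold in the pointwise convexity inequality for almost every $X$.

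The final step invokes the \emph{strict} convexity from (W2): the equality $W(X,\tfrac{1}{2}(a+b))=\tfrac{1}{2}(W(X,a)+W(X,b))$ forces $a=b$. Applied pointwise, this yields $\phi_1'(X)=\phi_2'(X)$ for almost every $X\in[0,L_0]$, and since $\phi_1(0)=0=\phi_2(0)$, integration upgrades this to $\phi_1\equiv\phi_2$ on $[0,L_0]$. I do not anticipate any genuine obstacle; the only point worth checking carefully is that the convex combination lies in the admissible class (which requires finiteness of the variational integral, not just the boundary conditions and Sobolev regularity), but this is immediate from the upper bound $I[\phi]\leq m$ derived above. For the $I_G$-case one replaces $[0,L_0]$ by $[g(0),g(L_0)]$ and $W$ by $W_G$ throughout, and uses the boundary values $0$ and $\ell_0$ of $\mathcal{A}_G$ at the endpoints $g(0)$ and $g(L_0)$ respectively.
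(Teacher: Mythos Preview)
Your argument is correct and is precisely the standard strict-convexity argument that the paper invokes in its one-line proof (``This follows from the strict convexity of $W(X,\cdot)$ and $W_G(z,\cdot)$, respectively''); you have simply spelled out the details that the paper leaves to the reader.
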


\begin{proof}
This follows from the strict convexity of $W(X, \cdot)$ and $W_G(z, \cdot)$, respectively.
\end{proof}

\begin{lemma}
\label{lem:BoundStress}
Suppose that $W$ satisfies (W1)--(W3) and that $B_1=B(G_0,R_1)$ with~\eqref{boundsonG}. Then there exist constants $P_0$, $P_1$, $\Sigma_0$, $\Sigma_1\in \R$, which only depend on~\eqref{boundsonG} and the function $\theta$ in (W2) and $W$ with $0<P_0<P_1$ and $\Sigma_0<\Sigma_1$ and the following property: If $G\in B_1$ with~\eqref{boundsonG} and if $\phi_G\in C^1([g(0),g(L_0)])$ denote the minimizer obtained in Corollary~\ref{cor:ExistenceIG}, then $S_G\in [\Sigma_0,\Sigma_1]$ and $\phi_G'\in[P_0,P_1]$ on $[g(0),g(L_0)]$.
\end{lemma}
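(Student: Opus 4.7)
The plan is a two-step argument: first bound the stress $S_G$ uniformly in $G\in B_1$ by a mean-value argument exploiting the Dirichlet boundary conditions, and then use Lemma~\ref{lem:algebraic} to convert that stress bound into pointwise bounds on $\phi_G'$.

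For the stress bound I would start from the boundary conditions $\phi_G(g(0))=0$ and $\phi_G(g(L_0))=\ell_0$, which give $\int_{g(0)}^{g(L_0)}\phi_G'(z)\dv{z}=\ell_0$. The length of the natural configuration is $g(L_0)-g(0)=\int_0^{L_0}G(X)\dv{X}$, and the hypothesis $G\in B_1$ together with~\eqref{boundsonG} yields the two-sided bound $\Gamma_0 L_0\leq g(L_0)-g(0)\leq \Gamma_1 L_0$. Hence the mean value of $\phi_G'$ lies in the fixed compact interval $J=[\ell_0/(\Gamma_1 L_0),\ell_0/(\Gamma_0 L_0)]\subset (0,\infty)$. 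Since $\phi_G'$ is continuous and strictly positive on $[g(0),g(L_0)]$ by Corollary~\ref{cor:ExistenceIG}, the intermediate value theorem produces some $z^*\in[g(0),g(L_0)]$ with $\phi_G'(z^*)\in J$, and constancy of the stress gives $S_G=\partial_p W(g^{-1}(z^*),\phi_G'(z^*))$. Continuity of $\partial_p W$ on the compact set $[0,L_0]\times J$, guaranteed by (W1), then yields a uniform enclosure $S_G\in[\Sigma_0,\Sigma_1]$ with $\Sigma_0$, $\Sigma_1$ depending only on the global constants.

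With this stress bound in hand, I would close the argument via Lemma~\ref{lem:algebraic}. By the Euler--Lagrange equation from Corollary~\ref{cor:ExistenceIG} together with $W_G(z,\cdot)=W(g^{-1}(z),\cdot)$, the function $\pi\colon[0,L_0]\to(0,\infty)$ defined by $\pi(X)=\phi_G'(g(X))$ satisfies $\partial_p W(X,\pi(X))=S_G$ for every $X\in[0,L_0]$. The uniformity clause of Lemma~\ref{lem:algebraic} then supplies constants $P_0$, $P_1\in(0,\infty)$, depending only on $\theta$ and on $[\Sigma_0,\Sigma_1]$, with $\pi(X)\in[P_0,P_1]$ on $[0,L_0]$. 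Bijectivity of $g\colon[0,L_0]\to[g(0),g(L_0)]$ transfers this to $\phi_G'(z)\in[P_0,P_1]$ on $[g(0),g(L_0)]$.

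I do not anticipate a serious obstacle: the two analytical inputs (a continuous, positive $\phi_G'$ with constant stress, and the algebraic inversion of $\partial_p W$ from Lemma~\ref{lem:algebraic}) are already available. The only point requiring care is checking that the interval $J$ and the resulting $[\Sigma_0,\Sigma_1]$ genuinely depend only on the global constants $\Gamma_0$, $\Gamma_1$, $L_0$, $\ell_0$, $W$, and $\theta$, but this follows transparently from~\eqref{boundsonG} and the continuity assumption (W1).
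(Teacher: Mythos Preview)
Your proposal is correct and follows essentially the same route as the paper: bound the mean value of $\phi_G'$ via the boundary conditions and~\eqref{boundsonG}, use the mean value theorem to find a point $z^*$ realizing a value in $J=[\ell_0/(\Gamma_1 L_0),\ell_0/(\Gamma_0 L_0)]$, evaluate the constant stress there, and then invoke Lemma~\ref{lem:algebraic}. The paper's proof differs only cosmetically: it splits into the three cases $L_G=\ell_0$, $L_G>\ell_0$, $L_G<\ell_0$ and uses the monotonicity of $\partial_p W(X,\cdot)$ to push $\partial_p W(g^{-1}(\xi),\ell_0/L_G)$ to the endpoints of $J$ before taking $\inf_X$ and $\sup_X$, yielding the explicit formulas $\Sigma_0=\inf_X\partial_p W(X,\ell_0/(\Gamma_1 L_0))$ and $\Sigma_1=\sup_X\partial_p W(X,\ell_0/(\Gamma_0 L_0))$. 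Your compactness argument on $[0,L_0]\times J$ achieves the same bound without the case distinction; the paper's version has the minor advantage of displaying $\Sigma_0$, $\Sigma_1$ explicitly.
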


\begin{proof}
Fix $G\in B_1$ with the properties in the assertion of the lemma. If $L_G = g(L_0) - g(0) = \ell_0$, then $\phi_G = \id|_{[g(0),g(L_0)]}$ is the unique minimizer of $I_G$ in $\mathcal{A}_G$ with $\partial_p W(\cdot, \phi_G')=0$ and it suffices to choose $\Sigma_0\leq 0 \leq \Sigma_1$. 

Suppose next that $L_G > \ell_0$. By construction, $L_G\leq \Gamma_1 L_0$ and by the mean value theorem there exists a $\xi\in (g(0), g(L_0))$ with 
\begin{align*}
 \phi_G'(\xi) = \frac{\phi_G(g(L_0)) - \phi_G(g(0))}{g(L_0) - g(0)} = \frac{\ell_0 - 0}{L_G} <1\,,
\end{align*}
and, since $\partial_p W_G(\xi, \cdot)$ is increasing with a unique zero in $p=1$, 
\begin{align*}
 0 & > S_G = \partial_p W_G(\xi, \phi_G'(\xi)) = \partial_p W_G(\xi, \frac{\ell_0}{L_G}) \geq \partial_p W_G(\xi, \frac{\ell_0}{\Gamma_1 L_0}) \\ & = \partial_p W(g^{-1}(\xi), \frac{\ell_0}{\Gamma_1 L_0}) \geq \inf_{X\in [0,L_0]}\partial_p W(X, \frac{\ell_0}{\Gamma_1 L_0})=\Sigma_0>-\infty\,.
\end{align*}
If $L_G < \ell_0$ and $L_G \geq \Gamma_0 L_0$, then there exists, by the mean value theorem, a $\xi \in (g(0), g(L_0))$ with 
\begin{align*}
 \frac{\phi_G(g(L_0)) - \phi_G(g(0))}{g(L_0) - g(0)} = \frac{\ell_0 - 0}{L_G} = \phi_G'(\xi)>1\,,
\end{align*}
and, since $\partial_p W_G(\xi, \cdot)$ is increasing, 
\begin{align*}
 0 & < S_G = \partial_p W_G(\xi, \phi_G'(\xi)) = \partial_p W_G(\xi, \frac{\ell_0}{L_G}) \leq \partial_p W_G(\xi, \frac{\ell_0}{\Gamma_0 L_0}) \\ & = \partial_p W(g^{-1}(\xi), \frac{\ell_0}{\Gamma_0 L_0}) \leq \sup_{X\in [0,L_0]}\partial_p W(X, \frac{\ell_0}{\Gamma_1 L_0})=\Sigma_1< \infty\,.
\end{align*}
Hence, for all $G\in B_1$, $S_G\in [\Sigma_0, \Sigma_1]$ and $\Sigma_0$, $\Sigma_1$ depend only on $\ell_0$, $L_0$, $\Gamma_0$, $\Gamma_1$ and $W$. The assertion follows from Lemma~\ref{lem:algebraic} with $E=[0,L_0]$ and $\pi(X) = \phi_G'(g(X))$ since $\phi_G$ satisfies the Euler-Lagrange equation $\partial_p W_G(z, \phi_G'(z))=S_G$ and since $g:[0,L_0]\to [g(0), g(L_0)]$ is bijective. 
\end{proof}

\begin{example}
Consider a body for which the elastic response is determined by a free energy density $W$ with $X$ dependent modulus of elasticity, that is, $W(X,p) = \kappa(X) W(p)$ where $W\in C^2(0,\infty)$ satisfies (W1) and (W2). If the stress in the system is given, $\partial_p W(X,p)=S$, then we can solve for $p$ due to the strict monotonicity of the bijective map $\partial_p W:(0,\infty)\to \R$ and find 
\begin{align*}
 \kappa(X) \partial_p W(p) = S \quad \Leftrightarrow \quad p = (\partial_p W)^{-1}\Barg{\frac{S}{\kappa(X)}}
\end{align*}
If $\kappa$ is Lipschitz continuous and bounded, $\kappa\in W^{1,\infty}(0,L_0;[\kappa_0,\kappa_1])$ with $\kappa_0>0$, and if $S$ is contained in a compact interval $S\in [S_0, S_1]$, then the assertions in Lemma~\ref{lem:W5} are satisfied since $(\partial_p W)^{-1}$ is continuously differentiable with bounded derivative on compact sets. A typical example for $W:(0,\infty)\to [0,\infty)$ is the function $p\mapsto W(p)= (p-1/p)^2$.
\end{example}

\section{Lipschitz dependence of the stress on the growth tensor}\label{sec:stresslip}

In Example~\ref{ex:stresscoupling} a model system was introduced in which the ordinary differential equation for $G(X,t)$ is nonlocal since it depends on the stress $S(t)$ which is determined from the elastic response of the material on $[g(0,t),g(L_0,t)]$. In this section we prove that the map $G\mapsto S(G)$ is Lipschitz continuous. To verify this fact, we consider this map as a composition of three maps: The maps which associate to a growth tensor $G$ the growth deformation $g$, the elastic deformation $\phi$, and finally the constant stress $S$ in the body . Based on this result, existence for the system in Example~\ref{ex:stresscoupling} can be proven by Picard-Lindel\"of's theorem, see Theorem~\ref{thm:ExistenceFullCoupling} and the discussion in Section~\ref{sec:conclusions}.

\begin{lemma}\label{lem:LipS}
Suppose that $W$ satisfies (W1)--(W4) and that $B_1=B(G_0,R_1)$ with~\eqref{boundsonG}. Then the map $S:B_1\to \R$, $S\mapsto S(G)$ is of class $C^1$ and there exist constants $M_{S,G}$ and $L_{S,G}$ which depend only on $G_0$ and $R_1$ such that
 \begin{align*}
 \| S \|_{C^0(B_1)} & \leq M_{S,G}\,,\\
   \sup_{G\in B_1} \Bnorm{ \frac{\partial S}{\partial G}(G) }_{\mathcal{L}(\mathcal{L}^\infty([0,L_0]);\R)} & \leq L_{S,G}\,.
 \end{align*}
In particular, $S$ is globally Lipschitz continuous on $B_1$ with Lipschitz constant $L_{S,G}$.
\end{lemma}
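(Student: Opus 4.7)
The plan is to reduce the variational characterisation of $S_G$ to an implicit scalar equation in terms of $G$ and then apply the implicit function theorem in Banach spaces, with the non-degeneracy condition supplied by the uniform bounds of Lemma~\ref{lem:BoundStress} together with the derivative formulas of Lemma~\ref{lem:W5}.

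\textbf{Step 1: Implicit equation.} By Corollary~\ref{cor:ExistenceIG} and Lemma~\ref{lem:W5}, for each $G\in B_1$ the minimizer satisfies $\phi_G'(z) = \pi_0(g^{-1}(z), S_G)$ on $[g(0), g(L_0)]$. Integrating and using the substitution $X = g^{-1}(z)$ with $\dv{z} = G(X)\dv{X}$, the boundary conditions $\phi_G(g(0))=0$, $\phi_G(g(L_0))=\ell_0$ give the implicit relation
\begin{align*}
F(G, S_G) := \int_0^{L_0} \pi_0(X, S_G)\, G(X)\, \dv{X} - \ell_0 = 0.
\end{align*}

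\textbf{Step 2: Non-degeneracy of the $S$-derivative.} Consider $F\colon B_1 \times [\Sigma_0, \Sigma_1] \to \R$, where $[\Sigma_0, \Sigma_1]$ is the compact stress range of Lemma~\ref{lem:BoundStress}. By Lemma~\ref{lem:W5} the partial derivatives
\begin{align*}
\partial_G F(G, S)[H] = \int_0^{L_0} \pi_0(X, S) H(X) \dv{X},\qquad
\partial_S F(G, S) = \int_0^{L_0} \partial_S \pi_0(X, S)\, G(X)\, \dv{X}
\end{align*}
are continuous in $(G,S)$, and $F$ is of class $C^1$. Because Lemma~\ref{lem:BoundStress} confines $\pi_0(X, S_G)$ to the compact interval $[P_0, P_1]$, assumption (W4) supplies a constant $c_1 > 0$ with $\partial_{pp} W \leq c_1$ on $[0, L_0] \times [P_0, P_1]$, hence $\partial_S \pi_0 = 1/\partial_{pp}W \geq 1/c_1$ there. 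Combined with the lower bound $G \geq \Gamma_0$ valid on $B_1$, this yields the uniform estimate
\begin{align*}
\partial_S F(G, S_G) \geq \frac{\Gamma_0 L_0}{c_1} > 0.
\end{align*}

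\textbf{Step 3: Bounds from the implicit function theorem.} The implicit function theorem applies at every $G \in B_1$ and produces $S \in C^1(B_1; \R)$ together with
\begin{align*}
\frac{\partial S}{\partial G}(G)[H] = -\frac{\int_0^{L_0} \pi_0(X, S_G) H(X) \dv{X}}{\int_0^{L_0} \partial_S \pi_0(X, S_G)\, G(X)\dv{X}}.
\end{align*}
The numerator is bounded by $P_1 L_0 \|H\|_{\mathcal{L}^\infty([0,L_0])}$ and the denominator from below by $\Gamma_0 L_0 / c_1$, so
\begin{align*}
\Bnorm{\frac{\partial S}{\partial G}(G)}_{\mathcal{L}(\mathcal{L}^\infty([0,L_0]);\R)} \leq \frac{P_1 c_1}{\Gamma_0} =: L_{S,G}
\end{align*}
uniformly on $B_1$. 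The $C^0$-bound $M_{S,G} = \max(|\Sigma_0|, |\Sigma_1|)$ follows directly from Lemma~\ref{lem:BoundStress}, and the convexity of $B_1$ combined with the uniform derivative bound yields global Lipschitz continuity via the mean value theorem.

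The main obstacle is the uniform positive lower bound on $\partial_S F$, which is the non-degeneracy condition for the implicit function theorem. This hinges on combining the pointwise identity $\partial_S \pi_0 = 1/\partial_{pp} W$ (which uses assumption (W4)) with the compact trapping of both the stress and the stretch from Lemma~\ref{lem:BoundStress}; without the latter, the merely positive continuous function $\partial_{pp}W$ need not admit a finite uniform upper bound, and the quotient in $\partial S/\partial G$ could degenerate.
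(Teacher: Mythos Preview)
Your proof is correct and follows essentially the same route as the paper: reduce the Euler--Lagrange equation to the scalar implicit relation $\int_0^{L_0}\pi_0(X,S)G(X)\,\dv X=\ell_0$, verify $C^1$-regularity and the non-degeneracy $\partial_S F>0$ via Lemma~\ref{lem:W5} and the uniform trapping from Lemma~\ref{lem:BoundStress}, then invoke the implicit function theorem and the mean value inequality. The only cosmetic difference is that the paper works on the enlarged open interval $(\Sigma_0-1,\Sigma_1+1)$ rather than the closed $[\Sigma_0,\Sigma_1]$, so that the implicit function theorem applies on an open set even if $S_G$ happens to hit an endpoint; you should make the same adjustment, which is harmless since $\pi_0$ and its $S$-derivative extend with the same uniform bounds to any fixed compact enlargement.
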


\begin{proof} 
By assumption the estimates in~\eqref{boundsonG} hold for all $G\in B_1$ and by Lemma~\ref{lem:BoundStress} there exist $\Sigma_0$, $\Sigma_1\in \R$, $\Sigma_0\leq \Sigma_1$ with $S(G)\in [\Sigma_0, \Sigma_1]$. All constants depend only on $G_0$, $R_1$, $W$, and $\theta$ since the constants $\Sigma_0$, $\Sigma_1$ depend, in view of Lemma~\ref{lem:BoundStress}, only on $G_0$, $R_1$, $W$. For $G\in B_1$ fixed we denote by $\phi_G\in C^1([g(0), g(L_0)])$ the unique minimizer in 
\begin{align*}
 \mathcal{A}_G = \nset{ \phi\in W^{1,1}(g(0),g(L_0)), I_G[\phi]<\infty, \phi(g(0))=0, \phi(g(L_0)) = \ell_0 }\subset W^{1,1}(g(0), g(L_0))
\end{align*}
of the variational integral 
\begin{align*}
 I_G[\phi] = \int_{g(0)}^{g(L_0)} W_G(z, \phi'(z)) \dv{z}\,,
\end{align*}
the existence and regularity of which is guaranteed by Corollary~\ref{cor:ExistenceIG}. Moreover, there exists an $S(G)\in \R$ such that the Euler-Lagrange equation 
\begin{align*}
 W_p(g^{-1}(z), \phi_G'(z)) = S(G) \quad \text{ on } [g(0),g(L_0)]
\end{align*}
holds. Since $g$ is bijective, the substitution $X=g^{-1}(z)$ or $g(X)=z$ implies that for all $X\in [0,L_0]$
\begin{align*}
\partial_p W(X, \phi_G'(g(X))) = S(G)\quad \text{ or }\quad (\phi_G'\circ g)(X) = \pi_0(X, S(G))
\end{align*}
and since $g'$ exists for almost all $X$, for almost all $X\in [0,L_0]$
\begin{align*}
\phi_G'(g(X)) g'(X) =  (\partial_p W(X, \cdot))^{-1}(S(G)) g'(X) = \pi_0(X,S(G))G(X)\,.
\end{align*}
By the properties of $\pi_0(X,S) = (\partial_p W(X, \cdot))^{-1}(S)$ in Lemma~\ref{lem:W5}, $\phi_G' \circ g$ is continuous as a function of $X$ on $[0,L_0]$, hence measurable, and an integration in $X$ shows that 
\begin{align*}
 \ell_0 = \phi_G(g(L_0)) - \phi_G(g(0)) = \int_0^{L_0} \frac{\mathrm{d}}{\mathrm{d}X} [ \phi_G\circ g](X) ] \dv{X} = \int_0^{L_0}  \pi_0(X,S(G)) G(X) \dv{X}\,.
\end{align*}
Define $\Phi :(\Sigma_0-1, \Sigma_1+1)\times B_1\to \R$ by 
\begin{align*}
 (S, G) \mapsto \Phi(S, G) = \int_0^{L_0} \pi_0(X,S) G(X) \dv{X} - \ell_0
\end{align*}
and note that each pair $(S(G), G)$ satisfies the equation $\Phi(S(G), G)=0$. Moreover, since $\partial_p W(X,\cdot):(0,\infty)\to \R$ is strictly increasing and onto, also $\partial_p W(X,\cdot)^{-1}:\R\to (0,\infty)$ is strictly increasing and onto, thus, the integral is strictly increasing as a function of $S$ as well and there exists, for every $G\in B_1$, exactly one $S\in [\Sigma_0, \Sigma_1]$ such that $\Phi(S,G)=0$. We use the theorem on implicit functions to prove that the map $G\mapsto S(G)$ is locally of class $C^1$ and that $\partial S/\partial G$ is uniformly bounded in $B_1$. In order to prove that $\Phi\in C^1$, it suffices by~\cite[Prop.~1.2]{AmbrosettiMalchiodi2007} to show that the partial derivatives exist and are continuous. Fix $(S, G)\in  (\Sigma_0-1,\Sigma_1+1)\times B_1$. Since $\Phi$ is linear in $G$, for all $\Delta G\in \mathcal{L}^\infty([0,L_0])$
\begin{align*}
 \frac{\partial \Phi}{\partial G}(S,G)[\Delta G] = \int_0^{L_0} \pi_0(X,S) \Delta G(X) \dv{X}\,.
\end{align*}
This map is linear and bounded since 
\begin{align*}
\sup_{0\neq \Delta G\in \mathcal{L}^\infty([0,L_0])} \Bnorm{ \frac{\partial \Phi}{\partial G}(S,G)[\Delta G] }_{\mathcal{L}(\mathcal{L}^\infty([0,L_0]);\R)} \cdot \| \Delta G \|_{\mathcal{L}^\infty([0,L_0])}^{-1} \leq \int_0^{L_0} \pi_0(X,S) \dv{X}
\end{align*}
and by Lemma~\ref{lem:W5} the integrand is continuous as a function of $X$ and uniformly bounded for $X\in [0,L_0]$ and $S\in [\Sigma_0-1, \Sigma_1+1]$. Moreover, this derivative is continuous on its domain $(\Sigma_0-1,\Sigma_1+1)\times B_1$ since the integrand is Lipschitz continuous in $S$, uniformly bounded in $S$ and $X$ on compact sets and independent of $G$. For $\Delta S\in \R$ we find
\begin{align*}
 \frac{\partial\Phi}{\partial S}(S, G) [\Delta S] & = \frac{\mathrm{d}}{\mathrm{d}\epsilon}\Bigr|_{\epsilon=0}\Phi(S+\epsilon \Delta S, G) = \frac{\mathrm{d}}{\mathrm{d}\epsilon}\Bigr|_{\epsilon=0} \int_0^{L_0} \pi_0(X,S+\epsilon \Delta S) G(X) \dv{X}  \\ & = \int_{0}^{L_0}\frac{\mathrm{d}}{\mathrm{d}\epsilon}\Bigr|_{\epsilon=0} \pi_0(X,S + \epsilon \Delta S) G(X) \dv{X} \\ & =\int_{0}^{L_0}  \frac{\partial}{\partial S} (\partial_p W(X, \cdot))^{-1}  (S)\Delta S G(X) \dv{X} \\ & = \Bsqb{ \int_{0}^{L_0} \frac{1}{\partial_{pp} W(X,\pi_0(X,S))} G(X)\dv{X}} \Delta S
\end{align*}
and we may identify the derivative with real number.
Since $S$ is contained in the compact interval $[\Sigma_0-1,\Sigma_1+1]$ and, since by Lemma~\ref{lem:W5} $\pi_0(\cdot,\cdot)$ is separately Lipschitz continuous on compact intervals, $\{\pi_0(X,S), X\in [0,L_0], S\in [\Sigma_0-1,\Sigma_1+1]\}$ is contained in a compact set and $\partial_{pp} W(X,\pi_0(\cdot, \cdot))$ is uniformly bounded from below and from above. Finally, $G\in B_1$ is bounded from below by $\Gamma_0$ and therefore $\partial\Phi/\partial S$ is uniformly bounded from below on $(S_0-1, S_1+1)\times B_1$ by a positive constant which depends only on the global constants. The continuity of this derivative  on its domain $(\Sigma_0-1,\Sigma_1-1)\times B_1$ follows from the continuity of $\pi_0$ in Lemma~\ref{lem:W5} and the theorem on dominated convergence. 

Thus, we may apply the theorem on implicit functions. Suppose that $\Phi(S,G)=0$. Then there exists an open neighborhood $U$ of $S$ in $\R$ and an open neighborhood $V$ of $G$ in $\mathcal{L}^\infty([0,L_0])$ and a $C^1$ function $\widetilde{S}:V\to U$, $G\mapsto \widetilde{S}(G)$ such that $\Phi(S,G) = 0$ in $U\times V$ if and only if $S = \widetilde{S}(G)$. Since the stress is uniquely determined from $G$, $\widetilde{S}(G)=S(G)$ on $V$ and the regularity of $S$ has been established. 

The formula for the differentiation of implicit functions shows that for all $\Delta G\in \mathcal{L}^\infty([0,L_0])$
\begin{align*}
 \frac{\partial S}{\partial G}(G)[\Delta G] = - \Barg{ \frac{\partial\Phi}{\partial S}(S, G) }^{-1}\Bsqb{\frac{\partial \Phi}{\partial G}(S, G)[\Delta G]}
\end{align*}
and thus $\partial S/ \partial G$ is uniformly bounded in $C^0( (\Sigma_0-1, \Sigma_1+1)\times B_1, \mathcal{L}(\mathcal{L}^\infty([0,L_0]),\R))$. Finally the mean value theorem in Banach spaces implies for $G_1$, $G_2\in B_1$, in view of the convexity of $B_0$, that 
\begin{align*}
 | S(G_1) - S(G_2) | \leq C(\Gamma_0,\Gamma_1) \| G_1 - G_2 \|_{\mathcal{L}^\infty([0,L_0])}\,.
\end{align*}
Therefore, $S$ is globally Lipschitz continuous on $B_1$. 
\end{proof}

\section{Lipschitz dependence of the nutrients on the growth tensor}\label{sec:nutrientslip}

The inclusion of a dependence on nutrients in the evolution equation for the growth tensor $G$ leads to additional difficulties since the domain of $n_G$ is the deformed configuration, normalized to be the interval $[0,\ell_0]$. Therefore, Lipschitz continuity of the map $y_G = \phi_G \circ g$ is needed in order to control the change of variables in $N_G = n_G \circ y_G$. Moreover, the assumptions in our model lead to the factor $\phi_G' \circ \phi_G^{-1}$ in the expression for the diffusion coefficient $D_G$ and the reaction rate $\beta_G$. In the following, we illustrate how to obtain the Lipschitz continuity on $B_1 = B(G_0, R_1)$ for this specific model. The analysis of similar models is expected to be analogous. We begin by investigating the dependence of the elastic deformation $\phi_G$ on $G$. Since $\phi_G$ is defined on $[g(0), g(L_0)]$, it is necessary to consider $\phi_G \circ g:[0,L_0]\to \R$. 

\begin{lemma}
\label{lem:lipconty}
Suppose that the assumptions (W1)--(W4) hold and that $B_1 = B(G_0, R_1)$ with~\eqref{boundsonG}. Then there exist constants $L_{y,G}$ and $L_{\phi'\circ g,G}$ which depend only on global constants such that for all $G_1$, $G_2\in B_1$ the following assertions hold: 
\begin{itemize}
 \item [(i)] The map $y:B_1\to C^0([0,L_0])$, $G\mapsto y_G = \phi_G \circ g$ is Lipschitz continuous, i.e., 
\begin{align*}
  \| y_1 - y_2 \|_{C^0([0,L_0])} =   \| \phi_1 \circ g_1 - \phi_2\circ g_2 \|_{C^0([0,L_0])} \leq L_{y,G} \| G_1 - G_2 \|_{\mathcal{L}^\infty([0,L_0])}\,.
\end{align*}

\item [(ii)] The map $\phi'\circ g:B_1\to C^0([0,L_0])$, $G\mapsto \phi_G' \circ g$ is Lipschitz continuous, i.e., 
\begin{align*}
 \| \phi_1' \circ g_1 - \phi_2'\circ g_2 \|_{C^0([0,L_0])} \leq L_{\phi'\circ g,G} \| G_1 - G_2 \|_{\mathcal{L}^\infty([0,L_0])}\,.
\end{align*}
\end{itemize}
\end{lemma}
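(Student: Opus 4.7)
The plan is to deduce both Lipschitz estimates from the pointwise identity
\begin{align*}
(\phi_G'\circ g)(X) = \pi_0(X, S(G)),\qquad X\in[0,L_0],
\end{align*}
which follows directly from the Euler--Lagrange equation in Corollary~\ref{cor:ExistenceIG} (after the substitution $z=g(X)$, exactly as done at the beginning of the proof of Lemma~\ref{lem:LipS}). Together with the bound $S(G)\in[\Sigma_0,\Sigma_1]$ of Lemma~\ref{lem:BoundStress} and the uniform a~priori bounds for $\pi_0$ on compact intervals from Lemma~\ref{lem:algebraic} and Lemma~\ref{lem:W5}, this reduces everything to two separate Lipschitz ingredients: the Lipschitz dependence of $S$ on $G$ (Lemma~\ref{lem:LipS}) and the Lipschitz dependence of $\pi_0$ in its second argument (Lemma~\ref{lem:W5}). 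No further PDE/variational argument is needed.

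For~(ii), I would simply estimate, for $G_1,G_2\in B_1$ and every $X\in[0,L_0]$,
\begin{align*}
|(\phi_1'\circ g_1)(X)-(\phi_2'\circ g_2)(X)|
=|\pi_0(X,S(G_1))-\pi_0(X,S(G_2))|\leq L_{\partial_p W^{-1},S}\,|S(G_1)-S(G_2)|,
\end{align*}
where $L_{\partial_p W^{-1},S}=L_{\partial_p W^{-1},S}(\Sigma_0,\Sigma_1)$ is the constant from Lemma~\ref{lem:W5} applied to the compact interval $[\Sigma_0,\Sigma_1]$ from Lemma~\ref{lem:BoundStress}. Combining with Lemma~\ref{lem:LipS} gives $L_{\phi'\circ g,G}\leq L_{\partial_p W^{-1},S}\,L_{S,G}$, and taking the supremum in $X$ yields the claim.

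For~(i), I would write $y_G$ as an antiderivative. Since $g$ is Lipschitz with $g'=G$ a.e.\ and $\phi_G\in C^1$, the chain rule gives $y_G'(X)=(\phi_G'\circ g)(X)\,G(X)=\pi_0(X,S(G))\,G(X)$ a.e., and together with $y_G(0)=0$ we obtain
\begin{align*}
y_G(X)=\int_0^X \pi_0(U,S(G))\,G(U)\,\dv{U}.
\end{align*}
Adding and subtracting $\pi_0(U,S(G_1))G_2(U)$ splits the difference $y_1(X)-y_2(X)$ into two integrals: one controlled by $\|\pi_0(\cdot,S(G_1))\|_\infty\,L_0\,\|G_1-G_2\|_{\mathcal{L}^\infty}$ (using Lemma~\ref{lem:algebraic} to bound $\pi_0$ uniformly in $X$ and for $S\in[\Sigma_0,\Sigma_1]$), and one controlled by $\Gamma_1\,L_0\,L_{\partial_p W^{-1},S}\,|S(G_1)-S(G_2)|$ (using Lemma~\ref{lem:W5} and $\|G_2\|_{\mathcal{L}^\infty}\leq \Gamma_1$). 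Applying Lemma~\ref{lem:LipS} to the second term and taking the supremum in $X\in[0,L_0]$ yields a Lipschitz constant $L_{y,G}$ depending only on $L_0$, $\Gamma_1$, $\Sigma_0$, $\Sigma_1$, $\theta$ and $W$, i.e.\ only on global constants.

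Since all the heavy lifting is done by the preceding lemmas, no step presents a genuine obstacle. The only point requiring some care is the justification of the chain rule $y_G'=(\phi_G'\circ g)\,G$ a.e.\ on $[0,L_0]$; this is standard because $g$ is Lipschitz monotone with $g'=G\geq\Gamma_0>0$ and $\phi_G\in C^1$, so $y_G=\phi_G\circ g$ is Lipschitz and its a.e.\ derivative coincides with the classical chain-rule formula. With that in hand, both estimates reduce to one-line triangle inequalities.
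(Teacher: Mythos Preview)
Your proposal is correct and follows essentially the same route as the paper: the paper also proves (ii) first via the identity $(\phi_G'\circ g)(X)=\pi_0(X,S_G)$ combined with Lemma~\ref{lem:W5} and Lemma~\ref{lem:LipS}, and then obtains (i) from the integral representation $y_G(X)=\int_0^X(\phi_G'\circ g)(U)G(U)\,\dv{U}$ together with a product-type splitting of the integrand. The only cosmetic difference is that the paper phrases the splitting in terms of $(\phi_i'\circ g_i)\,g_i'$ and invokes (ii) directly (citing \cite[Lemma~7.5]{GilbargTrudinger1998} for the chain rule), whereas you substitute $\pi_0$ throughout; the resulting constants and structure are the same.
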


\begin{proof}
By Lemma~\ref{lem:BoundStress} there exist $P_0$, $P_1$, $\Sigma_0$, $\Sigma_1\in \R$ which depend only on global constants such that for all $G\in B_1$ the bounds $\phi_G'\in[P_0,P_1]$ and $S_G\in [\Sigma_0, \Sigma_1]$ are true. By Corollary~\ref{cor:ExistenceIG} the Euler-Lagrange equation implies that the stress is constant and that there exists a constant $S_G\in [\Sigma_0,\Sigma_1]$ with 
\begin{align*}
\partial_p W(g^{-1}(\cdot), \phi_G'(\cdot)) = S_G \quad \text{ on }[g(0),g(L_0)]
\end{align*}
and, since $g:[0,L_0]\to [g(0),g(L_0)]$ is bijective, on $[0,L_0]$,
\begin{align*}
 (\phi_G'\circ g)(\cdot) = \pi_0(\cdot, S_G)\,.
\end{align*}
Lemma~\ref{lem:W5} ensures that $\pi_0$ is Lipschitz continuous as a function of $S$ on $[\Sigma_0, \Sigma_1]$ and that the Lipschitz constant is independent of $X\in [0,L_0]$. Since the map $G\mapsto S_G$ is Lipschitz continuous by Lemma~\ref{lem:LipS}, we obtain that for all $G_1$, $G_2\in B_1$ 
\begin{align*}
\| \phi_1' \circ g_1 - \phi_2'\circ g_2 \|_{C^0([0,L_0])} & = \sup_{X\in [0,L_0]} |\phi_1' \circ g_1 - \phi_2'\circ g_2 |(X) = \sup_{X\in [0,L_0]} | \pi_0(\cdot,S_1) - \pi_0(\cdot,S_2)|(X)  \\  & \leq L_{W_p^{-1},S}(\Sigma_0,\Sigma_1) |S_1 - S_2| \\ & \leq L_{W_p^{-1},S}(\Sigma_0,\Sigma_1) L_{S,G} \| G_1 - G_2 \|_{\mathcal{L}^\infty([0,L_0])}\,,
\end{align*}
that is, the second assertion. To prove the first assertion, recall that $(\phi_i \circ g_i)(0)=0$, and that $\phi_i \circ g_i\in W^{1,\infty}(0,L_0)$, $i=1,2$. By the fundamental theorem of calculus and the chain rule, see~\cite[Lemma 7.5]{GilbargTrudinger1998}, for all $X\in [0,\ell_0]$ 
\begin{align*}
|y_1(X)-y_2(X)|&= | (\phi_1\circ g_1)(X) - (\phi_2\circ g_2)(X) | \leq \int_0^{X} | (\phi_1\circ g_1)'(U) - (\phi_2\circ g_2)'(U) |\dv{U} \\ & \leq L_0 \| (\phi_1'\circ g_1)g_1' - (\phi_2'\circ g_2)g_2'\|_{L^\infty(0,L_0)} \\ & \leq L_0 \| [(\phi_1'\circ g_1) - (\phi_2'\circ g_2)]g_1'\|_{L^\infty(0,L_0)} + L_0 \| g_1' - g_2' \|_{L^\infty(0,L_0)} \cdot \| \phi_2'\circ g_2 \|_{L^\infty(0,L_0)}
\end{align*}
and hence in view of (ii) and the global bounds $\phi_i'\in[P_0,P_1]$, $i=1,2$, 
\begin{align*}
\| y_1 - y_2\|_{C^0([0,L_0])}  = \| \phi_1 \circ g_1 - \phi_2\circ g_2 \|_{C^0([0,L_0])} 
& \leq  L_0(\Gamma_1 L_{\phi'\circ g,G} + P_1)  \| G_1 - G_2 \|_{L^\infty(0,L_0)}\,.
\end{align*}
Thus, the proof of the first assertion is concluded since for $G\in \mathcal{L}^\infty([0,L_0])$ and $g(X) = \int_0^X g(U)\dv{U}$ the identity $g'=G$ holds almost everywhere.
\end{proof}

Before we prove Lipschitz continuity of the diffusion coefficient as a function of $G$ defined on $B_1$ we derive uniform Lipschitz estimates for the functions $X\mapsto y_G(X)$, $x\mapsto (\phi_G'\circ \phi_G^{-1})(x)$ and, as a consequence, as stated in the Lemma~\ref{lem:PropertiesReactionDiffusion}, for $x\mapsto D_G(x)$.

\begin{lemma}\label{lem:LipInX}
 Suppose that (W1)--(W4) hold and that $B_1 = B(G_0,R_1)$ with~\eqref{boundsonG}. Then there exist constants $L_{y,X}$ and $L_{\phi'\circ \phi^{-1},x}$ such that the following assertions hold:
 \begin{itemize}
  \item [(i)] for all $G\in B_1$ and for all $X_1$, $X_2\in [0,L_0]$
\begin{align*}
 | y_G(X_1) - y_G(X_2) | \leq L_{y,X} | X_1 - X_2|\,;
\end{align*}

\item [(ii)] for all $G\in B_1$ and for all $x_1$, $x_2\in [0,\ell_0]$
\begin{align*}
 | (\phi_G' \circ \phi_G^{-1})(x_1) - (\phi_G' \circ \phi_G^{-1})(x_2) | \leq L_{\phi'\circ\phi^{-1},x} |x_1 - x_2|\,;
\end{align*}

\item [(iii)]  for all $G\in B_1$ and for all $X_1$, $X_2\in [0,L_0]$
\begin{align*}
 | (\phi_G' \circ g)(X_1) - (\phi_G' \circ g)(X_2) | \leq L_{\phi'\circ g,X} |X_1 - X_2|\,.
\end{align*}
\end{itemize}
\end{lemma}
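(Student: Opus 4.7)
The plan is to treat the three items in the order (iii), (i), (ii), since (i) uses (iii) and (ii) uses (i). The workhorse identity throughout is the one already exploited in the proof of Lemma~\ref{lem:lipconty}: substituting $z=g(X)$ in the Euler--Lagrange equation from Corollary~\ref{cor:ExistenceIG} yields
\[
(\phi_G'\circ g)(X) = \pi_0(X, S_G)
\]
for every $X\in[0,L_0]$ and every $G\in B_1$.

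For (iii): by Lemma~\ref{lem:BoundStress} the stress $S_G$ lies in the fixed compact interval $[\Sigma_0,\Sigma_1]$ for every $G\in B_1$, and by Lemma~\ref{lem:W5} the map $\pi_0(\cdot,S)$ is Lipschitz in its first argument on $[0,L_0]$ with a constant $L_{\partial_p W^{-1},X}(\Sigma_0,\Sigma_1)$ depending only on $\Sigma_0$, $\Sigma_1$ and the derivatives of $\partial_p W$. Setting $L_{\phi'\circ g,X}:=L_{\partial_p W^{-1},X}(\Sigma_0,\Sigma_1)$ completes this part.

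For (i): since $g'=G$ almost everywhere, the chain rule gives
\[
y_G'(X) \;=\; (\phi_G'\circ g)(X)\,G(X) \;=\; \pi_0(X,S_G)\,G(X)\quad\text{a.e.}
\]
Both factors are uniformly bounded on $B_1$: $\pi_0(X,S_G)\in[P_0,P_1]$ by Lemma~\ref{lem:BoundStress} and $G(X)\in[\Gamma_0,\Gamma_1]$ by~\eqref{boundsonG}. Hence $\|y_G'\|_{L^\infty([0,L_0])}\le P_1\Gamma_1=:L_{y,X}$ uniformly on $B_1$, proving (i).

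For (ii): from (i) together with the lower bound $y_G'\ge P_0\Gamma_0>0$, Lemma~\ref{lem:PropertiesLip}(d) applied to $y_G$ shows that $y_G^{-1}\colon[0,\ell_0]\to[0,L_0]$ exists and satisfies $L_{y_G^{-1}}\le 1/(P_0\Gamma_0)$ uniformly on $B_1$. For $x\in[0,\ell_0]$, writing $X:=y_G^{-1}(x)$ gives $x=\phi_G(g(X))$, so $\phi_G^{-1}(x)=g(X)$ and therefore
\[
(\phi_G'\circ\phi_G^{-1})(x) \;=\; \phi_G'(g(X)) \;=\; \pi_0(X,S_G) \;=\; \pi_0\bigl(y_G^{-1}(x),S_G\bigr).
\]
Combining (iii) with the uniform Lipschitz estimate on $y_G^{-1}$ via Lemma~\ref{lem:PropertiesLip}(a) yields $L_{\phi'\circ\phi^{-1},x}\le L_{\phi'\circ g,X}/(P_0\Gamma_0)$, which is the claimed uniform estimate. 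I do not anticipate a genuine obstacle: the substantive work is already contained in Lemmas~\ref{lem:BoundStress} and~\ref{lem:W5}, and the only point requiring attention is that every constant appearing ($P_0,P_1,\Sigma_0,\Sigma_1,\Gamma_0,\Gamma_1$) depends solely on the global constants listed after~\eqref{boundsonG}, which is exactly what those lemmas are designed to guarantee.
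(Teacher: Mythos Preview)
Your proof is correct and follows essentially the same approach as the paper: both rely on the identity $(\phi_G'\circ g)(X)=\pi_0(X,S_G)$ together with Lemma~\ref{lem:W5} for (iii), the factorisation $y_G'=(\phi_G'\circ g)\cdot G$ with the uniform bounds $[P_0,P_1]\times[\Gamma_0,\Gamma_1]$ for (i), and the decomposition $\phi_G'\circ\phi_G^{-1}=(\phi_G'\circ g)\circ y_G^{-1}$ for (ii). The only cosmetic difference is that you order the parts (iii), (i), (ii) rather than (i), (iii), (ii) as in the paper, which is harmless.
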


\begin{proof}
 By Lemma~\ref{lem:BoundStress} there exist $P_0$, $P_1$, $\Sigma_0$, $\Sigma_1\in \R$ which depend only on the global constants such that for all $G\in B_1$ the bounds $\phi_G'\in [P_0, P_1]$ and $S_G\in [\Sigma_0, \Sigma_1]$ hold. Since for almost all $X\in [0,L_0]$, $g'(X)=G(X)\in [\Gamma_0, \Gamma_1]$, both $g$ and $\phi_G$ are Lipschitz continuous and hence $y_G$ is Lipschitz continuous. The Lipschitz constant depends only on global constants and this fact proves (i). To prove (ii), recall that for all $G\in B_1$ the Euler-Lagrange equation $\partial_p W_G(z, \phi_G'(z))=S_G$ holds and since the map $g:[0,L_0]\to [g(0),g(L_0)]$ is bijective, we find for all $X\in [0,L_0]$ that $\partial_p W(X, (\phi_G'\circ g)(X)) = S_G$ and this equation is equivalent to $(\phi_G'\circ g)(X) = \pi_0(X, S_G)$. By Lemma~\ref{lem:W5}, $\phi_G'\circ g$ is Lipschitz continuous in $X\in [0,L_0]$ and the Lipschitz constant is uniformly bounded in $S\in [\Sigma_0, \Sigma_1]$, proving (iii). Therefore, 
 \begin{align*}
  \phi_G' \circ \phi_G^{-1} = (\phi_G' \circ g) \circ (g^{-1}\circ \phi_G^{-1}) = (\phi_G' \circ g) \circ y_G^{-1}
 \end{align*}
is Lipschitz continuous by Lemma~\ref{lem:PropertiesLip} since 
\begin{align*}
 y_G(X) = y_G(X) - y_G(0) = \int_0^X y_G'(U)\dv{U} = \int_0^X (\phi_G' \circ g)(U) g'(U) \dv{U} 
\end{align*}
with $\phi_G'\circ g \in [P_0,P_1]$ and $g'\in [\Gamma_0, \Gamma_1]$ and $P_0$, $\Gamma_0>0$.
\end{proof}

Recall, that throughout the paper we assume that the assumptions~\eqref{bounds:DandBeta} concerning the reaction-diffusion equation hold.

\begin{lemma}
\label{lem:PropertiesReactionDiffusion}
Suppose that (W1)--(W4) hold and that $B_1 = B(G_0, R_1)$ with \eqref{boundsonG}. Then the mappings 
\begin{align*}
 D:B_1 \to C^0([0,\ell_0])\,,\quad & G\mapsto D_G = (\phi_G'\circ \phi_G^{-1})\cdot  (D_0\circ y_G^{-1}) \,, \\
\beta:B_1 \to \mathcal{L}^\infty([0,\ell_0])\,,\quad & G\mapsto \beta_G = [\phi_G'\circ \phi_G^{-1}]^{-1}\cdot (\beta_0\circ y_G^{-1})
\end{align*}
are bounded and Lipschitz continuous, that is, for all $G_1$, $G_2\in B_1$ the estimates 
\begin{align*}
 \| D_1 - D_2 \|_{C^0([0,\ell_0])} \leq L_{D,G} \| G_1 - G_2 \|_{\mathcal{L}^\infty([0,L_0])},\quad  \| \beta_1 - \beta_2 \|_{C^0([0,\ell_0])} \leq L_{\beta,G} \| G_1 - G_2 \|_{\mathcal{L}^\infty([0,L_0])}
\end{align*}
hold. Moreover, for all $G\in B_1$ the coefficients $D_G$ and $\beta_G$ are uniformly elliptic and Lipschitz continuous, that is, there exist constants $L_{D,x}$ and $L_{\beta,x}$ such that for all $x_1$, $x_2\in [0,\ell_0]$
\begin{align*}
|D_G(x_1) - D_G(x_2)| \leq L_{D, x} |x_1 - x_2|,\quad |\beta_G(x_1) - \beta_G(x_2)| \leq L_{\beta, x} |x_1 - x_2|.
\end{align*}
and
\begin{align*}
 D_G\in [P_0D_{min}, P_1 D_{max}],\,\beta_G \in \Bsqb{ \frac{\beta_{min}}{P_1}, \frac{\beta_{max}}{P_0} }\,.
\end{align*}
All constants depend only on the global constants. 
\end{lemma}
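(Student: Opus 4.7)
The proof proceeds by systematically assembling the statement from the building blocks established in the previous lemmas, using the calculus rules in Lemma~\ref{lem:PropertiesLip} (composition, product, reciprocal, bi-Lipschitz inverses).

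First I would verify the pointwise bounds. By Lemma~\ref{lem:BoundStress}, for every $G\in B_1$ we have $\phi_G'\in[P_0,P_1]$ on $[g(0),g(L_0)]$, hence $\phi_G'\circ\phi_G^{-1}\in[P_0,P_1]$ on $[0,\ell_0]$. Combined with the assumption~\eqref{bounds:DandBeta} on $D_0$ and $\beta_0$, the elementary product and reciprocal inequalities immediately give $D_G\in[P_0 D_{min},P_1 D_{max}]$ and $\beta_G\in[\beta_{min}/P_1,\beta_{max}/P_0]$.

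Next I would establish Lipschitz continuity in $x$ for $G$ fixed. The key decomposition, already used in Lemma~\ref{lem:LipInX}, is $\phi_G'\circ\phi_G^{-1}=(\phi_G'\circ g)\circ y_G^{-1}$. Lemma~\ref{lem:LipInX}(ii) gives Lipschitz continuity of $\phi_G'\circ\phi_G^{-1}$ on $[0,\ell_0]$ with constant $L_{\phi'\circ\phi^{-1},x}$ depending only on global constants. Since $y_G'=(\phi_G'\circ g)\,g'\ge P_0\Gamma_0>0$ almost everywhere, Lemma~\ref{lem:PropertiesLip}(d) yields that $y_G^{-1}$ is Lipschitz with constant $\le 1/(P_0\Gamma_0)$; composition with $D_0,\beta_0\in W^{1,\infty}([0,L_0])$ gives Lipschitz continuity of $D_0\circ y_G^{-1}$ and $\beta_0\circ y_G^{-1}$. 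Then Lemma~\ref{lem:PropertiesLip}(b) (product) and (c) (reciprocal, using $\phi_G'\circ\phi_G^{-1}\ge P_0$) deliver the constants $L_{D,x}$ and $L_{\beta,x}$.

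The main work is the Lipschitz dependence on $G$. For two growth tensors $G_1,G_2\in B_1$ I write
\begin{align*}
(\phi_1'\circ\phi_1^{-1})(x)-(\phi_2'\circ\phi_2^{-1})(x)
&=\bigl[(\phi_1'\circ g_1)(y_1^{-1}(x))-(\phi_1'\circ g_1)(y_2^{-1}(x))\bigr]\\
&\quad+\bigl[(\phi_1'\circ g_1)(y_2^{-1}(x))-(\phi_2'\circ g_2)(y_2^{-1}(x))\bigr].
\end{align*}
The first bracket is bounded by $L_{\phi'\circ g,X}\,\|y_1^{-1}-y_2^{-1}\|_{C^0([0,\ell_0])}$, and Lemma~\ref{lem:BoundInverseFunction} (applied with $a=P_0\Gamma_0$) together with Lemma~\ref{lem:lipconty}(i) shows that this is controlled by a multiple of $\|G_1-G_2\|_{\mathcal{L}^\infty}$. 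The second bracket is bounded directly by Lemma~\ref{lem:lipconty}(ii). A completely analogous split, using the Lipschitz constant $L_{D_0}$ of $D_0$, shows $\|D_0\circ y_1^{-1}-D_0\circ y_2^{-1}\|_{C^0}\le (L_{D_0}/(P_0\Gamma_0))\,L_{y,G}\,\|G_1-G_2\|_{\mathcal{L}^\infty}$, and similarly for $\beta_0$.

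Finally I combine these ingredients via Lemma~\ref{lem:PropertiesLip}(b) to get Lipschitz continuity of the product $D_G=(\phi_G'\circ\phi_G^{-1})(D_0\circ y_G^{-1})$, using the uniform sup bounds on both factors established in the first step. For $\beta_G$ I first use Lemma~\ref{lem:PropertiesLip}(c), with the lower bound $P_0$ on $\phi_G'\circ\phi_G^{-1}$, to show Lipschitz continuity of $[\phi_G'\circ\phi_G^{-1}]^{-1}$ in $G$, and then again Lemma~\ref{lem:PropertiesLip}(b). The main obstacle is bookkeeping: one must verify at each step that the relevant sup bounds (both upper bounds on the functions involved and lower bounds for applying the reciprocal rule) depend only on the global constants listed in~\eqref{boundsonG}, \eqref{bounds:DandBeta}, and the structural assumptions on $W$, so that no constants degenerate as $G$ varies in $B_1$.
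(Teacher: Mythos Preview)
Your proposal is correct and follows essentially the same route as the paper: the same decomposition $\phi_G'\circ\phi_G^{-1}=(\phi_G'\circ g)\circ y_G^{-1}$, the same two-term split for the $G$-dependence, and the same appeals to Lemmas~\ref{lem:PropertiesLip}, \ref{lem:BoundInverseFunction}, \ref{lem:lipconty}, and~\ref{lem:LipInX}. If anything, you spell out the Lipschitz-in-$x$ part more explicitly than the paper does.
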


\begin{proof}
By Lemma~\ref{lem:BoundStress} there exist $P_0$, $P_1\in (0,\infty)$ which depend only on the global constants with $\phi_G'\in [P_0,P_1]$ on $[g(0),g(L_0)]$. For all $x\in [0,\ell_0]$
\begin{align*}
& | (\phi_1'\circ \phi_1^{-1})(x) - (\phi_2'\circ \phi_2^{-1})(x) |  = |(\phi_1' \circ g_1 \circ y_1^{-1})(x) - (\phi_2' \circ g_2 \circ y_2^{-1})(x) | \\ & \qquad \leq | (\phi_1'\circ g_1)(y_1^{-1}(x)) - (\phi_1'\circ g_1)(y_2^{-1}(x)) | + | (\phi_1'\circ g_1)(y_2^{-1}(x)) - (\phi_2'\circ g_2)(y_2^{-1}(x)) |\,.
\end{align*}
For the first term we use Lemma~\ref{lem:LipInX} (iii) and find  
\begin{align*}
 | (\phi_1'\circ g_1)(y_1^{-1}(x)) - (\phi_1'\circ g_1)(y_2^{-1}(x)) | & \leq L_{\phi'\circ g,X} |y_1^{-1} - y_2^{-1} |(x) \leq L_{\phi'\circ g,X} \| y_1^{-1} - y_2^{-1}\|_{C^0([0,\ell_0])}\,.
\end{align*}
By Lemma~\ref{lem:BoundInverseFunction} and since $y' = (\phi'\circ g)g'\in [\Gamma_0 P_0,\Gamma_1 P_1]$ almost everywhere we may define a constant $L_{y^{-1},G}$ such that
\begin{align*}
 \| y_1^{-1} - y_2^{-1}\|_{C^0([0,\ell_0])}\leq (\Gamma_0 P_0)^{-1} L_{y,G} \| G_1- G_2\|_{\mathcal{L}^{\infty}([0,L_0])} = L_{y^{-1},G}\| G_1- G_2\|_{\mathcal{L}^{\infty}([0,L_0])}\,.
\end{align*}
The second term is bounded by $\| \phi_1' \circ g_1 - \phi_2' \circ g_2\|_{C^0([0,L_0])}$ and can be estimated by Lemma~\ref{lem:lipconty}. Both inequalities imply
\begin{align*}
 \sup_{x\in [0,\ell_0]}| (\phi_1'\circ \phi_1^{-1})(x) - (\phi_2'\circ \phi_2^{-1})(x) | \leq ( L_{\phi'\circ g,X} L_{y^{-1},G} + L_{\phi'\circ g}) \| G_1- G_2\|_{\mathcal{L}^{\infty}([0,L_0])}
\end{align*}
and hence Lipschitz continuity in $C^0([0,\ell_0])$. Consequently the map $B_1\to C^0([0,\ell_0])$, $G\mapsto \phi_G'\circ \phi_G^{-1}$ is Lipschitz continuous and the Lipschitz constant depends only on the global constants. By definition, $D_G(x) = (\phi_G'\circ \phi_G^{-1}) \cdot (D_0 \circ y^{-1})$ and the second factor in this formula is estimated by Lemmas~\ref{lem:PropertiesLip} and~\ref{lem:BoundInverseFunction} as follows: For all $x\in [0,\ell_0]$
\begin{align*}
 | ( D_0\circ y_1^{-1}) (x) - (D_0\circ y_2^{-1})(x) | & \leq L_{D_0,X} \|y_1^{-1} - y_2^{-1}\|_{C^0([0,\ell_0])}\\ &\leq L_{D_0,X}L_{y^{-1},G} \| G_1 - G_2 \|_{\mathcal{L}^\infty([0,L_0])}\,.
\end{align*}
Since the map $D:B_1 \to C^0([0,\ell_0])$, $G\mapsto D_G$ is the product of two bounded and Lipschitz continuous functions with the same domain and range and since $C^0([0,\ell_0])$ is a Banach algebra, the proof of the Lipschitz continuity of $D_G$ in $G$ follows from Lemma~\ref{lem:PropertiesLip}.

To prove the Lipschitz continuity of the mapping $G\mapsto \beta_G = [\phi_G'\circ \phi_G^{-1}]^{-1}\beta_0\circ y_G^{-1}$, note that 
\begin{align*}
\frac{1}{\phi_1'\circ \phi_1^{-1}} - \frac{1}{\phi_2'\circ \phi_2^{-1}}  = \frac{ \phi_2'\circ \phi_2^{-1} - \phi_1'\circ \phi_1^{-1} }{(\phi_1'\circ \phi_1^{-1})\cdot (\phi_2'\circ \phi_2^{-1})}\,.
\end{align*}
Since the denominator is bounded from below by $P_0^2$, the map $[\phi'\circ \phi^{-1}]^{-1}:B_1\to C^0([0,\ell_0])$, $G\mapsto [\phi_G' \circ \phi_G^{-1}]^{-1}$ is Lipschitz continuous. The Lipschitz continuity of $\beta_0\circ y^{-1}$ follows as the Lipschitz continuity of the map $D_0\circ y^{-1}$ and thus the Lipschitz continuity of $G\mapsto \beta_G$ has been established.

Finally, $\phi_G'\circ \phi_G^{-1} \in [ P_0,P_1]$ and this estimate implies the uniform ellipticity for $D_G$ and $\beta_G$, respectively. This argument completes the proof of the lemma.
\end{proof}

\begin{lemma}
\label{lem:NisH2}
Suppose that $D_0$, $\beta_0\in W^{1,\infty}([0,L_0])$ satisfy~\eqref{bounds:DandBeta}, that (W1)--(W4) hold and that $B_1=B(G_0,R_1)$ with~\eqref{boundsonG}. Then there exists a constant $M_{n,H^2}$ which depends only on the data of the problem such that the unique weak solution in $H^1(0,\ell_0)$ of the reaction-diffusion equation induced by $G\in B_1$ with 
\begin{align*}
 -(D_G  n_G')' + \beta_G n_G & = 0\quad \text{ in }[0,\ell_0]\,,\\
 n_G(0) & = n_L\,,\\
 n_G(\ell_0) & = n_R\
\end{align*}
satisfies the a~priori bound 
\begin{align*}
 \| n_G \|_{H^2(0,\ell_0)} \leq M_{n,H^2}\,.
\end{align*}
Moreover, if $n_L$, $n_R\geq 0$, then $n_G\geq 0$ on $[0,\ell_0]$.
\end{lemma}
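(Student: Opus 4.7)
The plan is to proceed in three steps: first establish existence, uniqueness, and $L^\infty$ bounds for the weak solution $n_G$, then derive an $H^1$ energy estimate, and finally bootstrap to $H^2$ using the Lipschitz regularity of $D_G$ in $x$ provided by Lemma~\ref{lem:PropertiesReactionDiffusion}. To deal with the inhomogeneous Dirichlet data I would subtract the affine function $n_b(x) = n_L + (n_R - n_L)x/\ell_0$, so that $u = n_G - n_b$ solves a homogeneous Dirichlet problem with right-hand side $(D_G n_b')' - \beta_G n_b \in H^{-1}(0,\ell_0)$. The bilinear form $a(u,v) = \int_0^{\ell_0} (D_G u' v' + \beta_G u v)\dv{x}$ is bounded and coercive on $H_0^1(0,\ell_0)$ by virtue of the uniform pointwise bounds $D_G \in [P_0 D_{min}, P_1 D_{max}]$ and $\beta_G \in [\beta_{min}/P_1, \beta_{max}/P_0]$ from Lemma~\ref{lem:PropertiesReactionDiffusion}, and Lax-Milgram then delivers a unique weak solution with an $H^1$ bound depending only on $n_L$, $n_R$, $\ell_0$ and the coefficient bounds.

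For nonnegativity and the $L^\infty$ estimate, assume $n_L, n_R \geq 0$ and set $M = \max(n_L, n_R)$, $m = \min(n_L, n_R) \geq 0$. Testing the weak equation with $(n_G - M)^+ \in H_0^1$ and exploiting $\beta_G \geq 0$ gives $\int D_G |((n_G - M)^+)'|^2 \dv{x} + \int \beta_G n_G (n_G - M)^+ \dv{x} = 0$; on the support of $(n_G - M)^+$ the second integrand is nonnegative, forcing $(n_G - M)^+ \equiv 0$. Testing with $-(n_G - m)^-$ and using $\beta_G \geq 0$ and $m \geq 0$ yields the analogous lower bound, so $0 \leq m \leq n_G \leq M$ pointwise. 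To upgrade to $H^2$, observe that the weak formulation says $(D_G n_G')' = \beta_G n_G$ in the distributional sense on $(0,\ell_0)$; since the right-hand side lies in $L^\infty \subset L^2$ with a uniform bound from the previous step, it follows that $D_G n_G' \in H^1(0,\ell_0)$. By Lemma~\ref{lem:PropertiesReactionDiffusion}, $D_G$ is Lipschitz in $x$ with a uniform constant and bounded below by $P_0 D_{min}$, so $1/D_G \in W^{1,\infty}(0,\ell_0)$ uniformly, and in one dimension the product $W^{1,\infty} \cdot H^1 \subset H^1$. Consequently $n_G' = (D_G n_G')/D_G \in H^1(0,\ell_0)$ and $n_G \in H^2(0,\ell_0)$.

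For the uniform norm estimate I would chain three ingredients: first, the energy estimate obtained by testing the weak formulation with $n_G - n_b$ (or equivalently by testing for $u$) controls $\|n_G'\|_{L^2}$ purely in terms of $n_L$, $n_R$, $\ell_0$, and the uniform coefficient bounds. Second, the $L^\infty$ (hence $L^2$) bound from the maximum principle controls $\|n_G\|_{L^2}$. Third, once $n_G \in H^2$ is established, the equation in non-divergence form $n_G'' = (\beta_G n_G - D_G' n_G')/D_G$ holds a.e., and taking $L^2$ norms of both sides together with the uniform bounds on $D_G$, $D_G'$, $\beta_G$ from Lemma~\ref{lem:PropertiesReactionDiffusion} yields $\|n_G''\|_{L^2} \leq M_{n,H^2}$ with a constant depending only on the global data. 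The only real obstacle is bookkeeping: making sure every constant that enters is genuinely uniform over $G \in B_1$, which is precisely what Lemma~\ref{lem:PropertiesReactionDiffusion} is designed to guarantee through its uniform upper and lower pointwise bounds on $D_G$ and $\beta_G$ and its uniform Lipschitz constants $L_{D,x}$, $L_{\beta,x}$ in the spatial variable.
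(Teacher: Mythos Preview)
Your overall strategy matches the paper's: subtract the affine interpolant, apply Lax--Milgram using the uniform coefficient bounds from Lemma~\ref{lem:PropertiesReactionDiffusion}, derive an $H^1$ energy estimate, and obtain nonnegativity by testing with the negative part of $n_G$. Your $H^2$ step differs from the paper's and is in fact cleaner: the paper runs a standard difference-quotient argument with cut-offs to get $H^2_{loc}$ and then passes to a global bound via the non-divergence form, whereas you exploit the one-dimensional structure directly by reading the equation as $(D_G n_G')' = \beta_G n_G \in L^2$ and then dividing by the uniformly Lipschitz, uniformly positive coefficient $D_G$. Both routes yield the same uniform $H^2$ bound.

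There is, however, one genuine slip. Your lower bound $n_G \geq m = \min(n_L, n_R)$ is false in general when $m > 0$: the absorption term $\beta_G n_G$ drives the solution below its boundary values in the interior. Concretely, for constant coefficients the solution with $n_L = n_R = 1$ is $n(x) = \cosh(\lambda(x - \ell_0/2))/\cosh(\lambda \ell_0/2) < 1$ in the interior. The testing argument you propose fails at exactly the point where you need $\beta_G n_G (n_G - m) \geq 0$ on the set $\{n_G < m\}$; there $n_G - m < 0$ while $n_G$ may still be positive. This error is harmless for the lemma: the statement only asks for $n_G \geq 0$, and that is precisely the case $m = 0$ of your argument, which \emph{does} work (and is exactly what the paper does, testing with $\min\{n_G, 0\}$). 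Your upper bound $n_G \leq M$ is correct, but note that you do not actually need the maximum-principle $L^\infty$ control for the $H^2$ step, since in one dimension the $H^1$ energy estimate already yields a uniform $L^\infty$ bound via Sobolev embedding.
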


\begin{proof}
Define
\begin{align*}
 \widehat{n}(x) = \frac{n_R-n_L}{\ell_0} x + n_L\,,\quad \widetilde{n}(x) = n_G(x) - \widehat{n}(x)\,. 
\end{align*}
Then $\widetilde{n}(0)=\widetilde{n}(\ell_0) = 0$ and $\widetilde{n}\in H_0^1(0,\ell_0)$ is a weak solution of the equation 
\begin{align*}
 -(D_G \widetilde{n}')' + \beta_G \widetilde{n} = (D_G \widehat{n}')' - \beta_G \widehat{n} = f_G' + h_G 
\end{align*}
with
\begin{align*}
f_G = D_G \widehat{n}'=-D_G\frac{n_R-n_L}{\ell_0},\,\quad h_G = -\beta_G \widehat{n}\,.
\end{align*}
Thus, it suffices to prove that for any weak solution $\widetilde{n}\in H_0^1(0,\ell_0)$ of the equation $-(D_G \widetilde{n}')' + \beta_G \widetilde{n} = f_G'  + h_G$ the a priori estimate $\| \widetilde{n} \|_{H^2(0,\ell_0)}\leq M_{\widetilde{n},H^2}$ holds, where $M_{\widetilde{n},H^2}$ depends only on the global constants. Lemma~\ref{lem:BoundStress} implies the existence of constants $P_0$, $P_1\in (0,\infty)$ which depend only on the global constants such that $\phi_G'\in [P_0,P_1]$ on $[g(0),g(L_0)]$. Therefore the coefficient $D_G$ is uniformly bounded and elliptic with $D_G\in[ D_{min}P_0, D_{max}P_1]$ on $[0,\ell_0]$ and the coefficient $\beta_G$ is uniformly bounded with $\beta_G\in[ \beta_{min}/P_1, \beta_{max}/P_0]$ on $[0,\ell_0]$. The existence of a unique solution in $H^1(0,\ell_0)$ follows with Lax-Milgram. If one uses $\widetilde{n}$ as test function then one finds 
\begin{align*}
P_0 D_{min}\int_0^{\ell_0} (\widetilde{n}')^2 \dv{x} +\frac{\beta_{min}}{P_1}\int_0^{\ell_0} \widetilde{n}^2\dv{x} \leq -\int_0^{\ell_0} f_G \widetilde{n}' \dv{x} + \int_0^{\ell_0} h_G \widetilde{n}\dv{x}
\end{align*}
and Young's inequality leads to the a~priori estimate $\|\widetilde{n}\|_{1,2}\leq C$ where $C$ depends only on the global constants. The $H^2$ regularity follows by elliptic regularity. By Lemmas~\ref{lem:PropertiesReactionDiffusion} the diffusion coefficient $D_G$ is uniformly Lipschitz continuous and the Lipschitz constant depends only on the data. For $\zeta\in C_c^\infty(0,\ell_0)$ and $h\in \R$, $|h|>0$ small enough, one defines the difference quotient $D_h$ and the test function $\psi = D_{-h}(-\zeta^2 D_h \widetilde{n})\in H_0^1(0,\ell_0)$. The leading order term on the left-hand side is given by 
\begin{align*}
\int_0^{\ell_0} D_G(x) \widetilde{n}'(x) (D_{-h}(-\zeta^2 D_h \widetilde{n}(x)))' \dv{x} = \int_0^{\ell_0} D_h(D_G(x) \widetilde{n}'(x)) (\zeta^2 D_h \widetilde{n}(x))' \dv{x}
\end{align*}
and the leading order term on the right-hand side is given by 
\begin{align*}
\int_0^{\ell_0} f_G  (D_{-h}(-\zeta^2 D_h \widetilde{n}))'\dv{x} =  \int_0^{\ell_0} D_{h}f_G  (\zeta^2 D_h \widetilde{n})'\dv{x}\,.
\end{align*}
The first term provides the term $\|\zeta D_h \widetilde{n}'\|_{L^2(0,\ell_0)}^2$. If one rearranges all terms, then one finds an estimate for this term from above by $\| \widetilde{n} \|_{H^1(0,\ell_0)}^2$ and constants that depend only on the global constants, see Lemma~\ref{lem:PropertiesReactionDiffusion} for the properties of $D_G$ and $\beta_G$ which also provide estimates for the Lipschitz constant of $f_G$. Consequently, $\widetilde{n}\in H_{loc}^{2}(0,\ell_0)$, the equation holds pointwise almost everywhere and one can solve for $D_G \widetilde{n}''$ to obtain a global estimate in $H^2(0,\ell_0)$. 

Suppose now that $n_L$, $n_R\geq 0$. To prove the lower bound $n_G\geq 0$ we use the test function $\zeta_G = \min\{ n_G, 0 \}\in H_0^1(0,\ell_0)$. Since $\zeta_G' = \chi_{\{ n_G < 0 \}}  n_G'$ we find 
\begin{align*}
0 & = \int_0^{\ell_0} D_G(x) n_G'(x) \zeta_G'(x) \dv{x} + \int_0^{\ell_0} \beta_G(x) n_G(x) \zeta_G(x)\dv{x}  \\ & = \int_0^{\ell_0} D_G(x) | \zeta_G'(x)|^2 \dv{x} + \int_0^{\ell_0} \beta_G(x) \zeta_G^2(x) \dv{x}\geq 0\,.
\end{align*}
Thus $\zeta_G'=0$ and in view of Poincar\'e's inequality, $\zeta_G=0$, that is, $n_G \geq 0$.
\end{proof}

\begin{lemma}
\label{lem:LipN}
Suppose that $W$ satisfies (W1)--(W4) and that $B_1=B(G_0,R_1)$ with~\eqref{boundsonG}. The map $N:B_1=B(G_0,R_1)\to C^0([0,L_0])$, $G\mapsto N(G)=n_G\circ y_G$, is Lipschitz continuous and there exist constants $M_{N,G}$ and $L_{N,G}$ which only depend on $G_0$ and $R_1$ such that for all $G$, $G_1$, $G_2\in B_1$
 \begin{align*}
  \| N(G) \|_{C^0([0,L_0])} \leq M_{N,G}\,,\quad \| N(G_1) - N(G_2) \|_{C^0([0,L_0])} \leq L_{N,G} \| G_1 - G_2 \|_{\mathcal{L}^\infty([0,L_0]}\,.
 \end{align*}
\end{lemma}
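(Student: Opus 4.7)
The plan is to combine the a priori regularity from Lemma~\ref{lem:NisH2}, the Lipschitz dependence of the coefficients $D_G$, $\beta_G$ on $G$ from Lemma~\ref{lem:PropertiesReactionDiffusion}, and the Lipschitz dependence of $y_G$ on $G$ from Lemma~\ref{lem:lipconty}. First I would establish the uniform boundedness statement: since Lemma~\ref{lem:NisH2} yields a bound $\|n_G\|_{H^2(0,\ell_0)}\leq M_{n,H^2}$ independent of $G\in B_1$, the one-dimensional Sobolev embedding $H^2(0,\ell_0)\hookrightarrow C^1([0,\ell_0])$ gives uniform bounds on $\|n_G\|_{C^0([0,\ell_0])}$ and on $\|n_G'\|_{C^0([0,\ell_0])}$ depending only on global constants. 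Since $y_G$ maps $[0,L_0]$ into $[0,\ell_0]$, this immediately produces the required bound $\|N(G)\|_{C^0([0,L_0])}\leq M_{N,G}$.

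For the Lipschitz estimate, I would use the triangle decomposition
\begin{align*}
 \|N(G_1)-N(G_2)\|_{C^0([0,L_0])} & \leq \|n_{G_1}\circ y_{G_1} - n_{G_1}\circ y_{G_2}\|_{C^0([0,L_0])} \\ & \quad + \|n_{G_1}\circ y_{G_2} - n_{G_2}\circ y_{G_2}\|_{C^0([0,L_0])}\,.
\end{align*}
The first term is controlled by $\|n_{G_1}'\|_{C^0([0,\ell_0])}\cdot \|y_{G_1}-y_{G_2}\|_{C^0([0,L_0])}$, both factors of which are bounded (by the uniform $C^1$ bound on $n_G$ and by Lemma~\ref{lem:lipconty}(i)) and yield the desired $\|G_1-G_2\|_{\mathcal{L}^\infty([0,L_0])}$ behavior. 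The second term is dominated by $\|n_{G_1}-n_{G_2}\|_{C^0([0,\ell_0])}$, so the key remaining step is to prove that $G\mapsto n_G$ is Lipschitz continuous as a map $B_1\to C^0([0,\ell_0])$.

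To prove this, I would set $w=n_{G_1}-n_{G_2}\in H_0^1(0,\ell_0)$ and subtract the two weak formulations to obtain, for every test function $\psi\in H_0^1(0,\ell_0)$,
\begin{align*}
 \int_0^{\ell_0} D_{G_1} w'\psi'\dv{x} + \int_0^{\ell_0}\beta_{G_1} w\psi\dv{x} = -\int_0^{\ell_0}(D_{G_1}-D_{G_2}) n_{G_2}'\psi'\dv{x} - \int_0^{\ell_0}(\beta_{G_1}-\beta_{G_2}) n_{G_2}\psi\dv{x}\,.
\end{align*}
Using $\psi=w$, the uniform ellipticity of $D_{G_1}$ and positivity of $\beta_{G_1}$ provided by Lemma~\ref{lem:PropertiesReactionDiffusion}, together with Young's inequality, yield $\|w\|_{H^1(0,\ell_0)}\leq C\barg{\|D_{G_1}-D_{G_2}\|_{C^0}\|n_{G_2}'\|_{L^2}+\|\beta_{G_1}-\beta_{G_2}\|_{C^0}\|n_{G_2}\|_{L^2}}$. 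Since $\|n_{G_2}\|_{H^1}$ is uniformly bounded and Lemma~\ref{lem:PropertiesReactionDiffusion} gives the Lipschitz estimates $\|D_{G_1}-D_{G_2}\|_{C^0}\leq L_{D,G}\|G_1-G_2\|$ and analogously for $\beta$, we conclude $\|w\|_{H^1(0,\ell_0)}\leq C\|G_1-G_2\|_{\mathcal{L}^\infty([0,L_0])}$. The one-dimensional Sobolev embedding $H^1(0,\ell_0)\hookrightarrow C^0([0,\ell_0])$ then gives the same estimate in $C^0$.

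The main obstacle I anticipate is bookkeeping rather than any essential new idea: one must carefully collect all the dependencies to ensure the resulting constants depend only on the global constants (as defined after~\eqref{boundsonG}). The uniform $H^2$ bound from Lemma~\ref{lem:NisH2} is essential both for furnishing $\|n_{G_2}'\|_{L^2}$ in the source-term estimate and for providing the uniform Lipschitz constant of $n_{G_1}$ used to absorb the shift in $y_G$; the Lipschitz continuity of $D_G$, $\beta_G$ in $G$ handles the remaining contribution.
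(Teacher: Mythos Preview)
Your proposal is correct and follows essentially the same route as the paper: the same triangle decomposition of $N(G_1)-N(G_2)$, the same energy estimate obtained by testing the difference of the weak formulations with $w=n_{G_1}-n_{G_2}$ to control $\|n_{G_1}-n_{G_2}\|_{H^1}$ via Lemma~\ref{lem:PropertiesReactionDiffusion}, and the same use of the embeddings $H^1\hookrightarrow C^0$ and $H^2\hookrightarrow C^1$ together with Lemmas~\ref{lem:lipconty} and~\ref{lem:NisH2}.
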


\begin{proof}
Suppose that $G_1$, $G_2\in B_1$ and that $n_1$, $n_2\in W^{1,2}([0,\ell_0])$ are the weak solutions of the reaction-diffusion equations with diffusion constants $D_1$, $D_2$ and absorption rates $\beta_1$, $\beta_2$, respectively. By Lemma~\ref{lem:NisH2}, $n_1$, $n_2\in H^2(0,\ell_0)$ with uniform bound on the $H^2$-norm, independent of $G\in B_1$.

We first verify the Lipschitz continuity of the map $n:\mathcal{L}^\infty([0,L_0])\to H^1(0,\ell_0)$, $G\mapsto n(G)$. Since $n_1 - n_2\in H_0^1(0,\ell_0)$, we may use $n_1 - n_2$ as a test function in the weak formulations for $n_1$ and $n_2$, i.e.,
\begin{align*}
 \int_0^{\ell_0} D_i(x) n_i'(x) (n_1 - n_2)'(x) + \beta_i(x)n_i(x) (n_1 - n_2)(x) \dv{x} = 0\,,\quad i=1,2\,.
\end{align*}
The difference of these two equations is given by 
\begin{align*}
 \int_0^{\ell_0} (D_1 n_1'-D_2 n_2') (n_1' - n_2') + (\beta_1 n_1 - \beta_2 n_2) (n_1 - n_2) \dv{x} = 0 
\end{align*}
and can be rewritten as 
\begin{align*}
& \int_0^{\ell_0} D_1 (n_1'-n_2')^2+ (D_1 - D_2) n_2' (n_1'-n_2') \dv{x} \\ & \quad + \int_{0}^{\ell_0} \beta_1 (n_1-n_2)^2 + (\beta_1 - \beta_2) n_2 (n_1 - n_2) \dv{x} = 0 \,.
\end{align*}
Consequently there exists a constant $C$ which depends only on the global constants with 
\begin{align*}
 \| n_1 - n_2 \|_{H^1(0,\ell_0)} \leq C \|D_1 - D_2\|_{L^\infty(0,\ell_0)} \| n_2'\|_{L^2(0,\ell_0)} + C\|\beta_1 - \beta_2\|_{L^\infty(0,\ell_0)} \| n_2\|_{L^2(0,\ell_0)}\,.
\end{align*}
The Lipschitz continuity of $n_G$ in $G$ with values in $H^1(0,\ell_0)$ follows by Lemma~\ref{lem:PropertiesReactionDiffusion} and we use the Sobolev embedding $H^1(0,\ell_0)\hookrightarrow C^0([0,\ell_0])$ to conclude the Lipschitz continuity in $C^0([0,\ell_0])$. 

To prove the Lipschitz continuity of $N_G$ in $G$ in $C^0([0,L_0])$ we write for $X\in [0,L_0]$
\begin{align*}
|N_1(X) - N_2(X)| & =  |(n_1\circ y_1)(X) - (n_2\circ y_2)(X)| \\ 
 & \leq  |(n_1\circ y_1)(X) - (n_1\circ y_2)(X)| +  |(n_1\circ y_2)(X) - (n_2\circ y_2)(X)|   \\ & \leq \| n_1' \|_{C^0([0,\ell_0])} |y_1(X) - y_2(X) | + \sup_{x\in [0,\ell_0]} |n_1(x) - n_2(x) | \\ & \leq \|n_1' \|_{C^0([0,\ell_0])} \| y_1 - y_2 \|_{C^0([0,L_0])} + \| n_1 - n_2 \|_{C^0([0,\ell_0])} \\ & \leq C \| G_1 - G_2 \|_{\mathcal{L}^\infty([0,L_0])}\,.
\end{align*}
We take the supremum in $X$ and obtain the assertion where we used, in the last estimate, the embedding $H^2(0,\ell_0)\hookrightarrow C^1([0,\ell_0])$ and the uniform bounds in Lemma~\ref{lem:NisH2}.
\end{proof}

\section{Global existence with Banach space methods}\label{sec:existence}

In this section we prove the existence of a solution of the parameter dependent system of ordinary differential equations for $G(X,t)$ based on a formulation of the problem as an ordinary differential equation in the Banach space $\mathcal{L}^\infty([0,L_0])$. The advantage of this approach is that, for a solution $G\in C^1([0,T];\mathcal{L}^\infty([0,L_0])$, the paths $t\mapsto G(X,t)$ are $C^1$ for all $X\in [0,L_0]$ and this fact allows us to identify these paths as solutions of the system of ordinary differential equations depending on the parameter $X$. Our global existence result is based on the following assumptions concerning the existence of comparison functions, where we denote by $\mathcal{L}_+^\infty([0,L_0])\subset\mathcal{L}^\infty([0,L_0])$ the set of all measurable and positive functions, 
\begin{align*}
 \mathcal{L}_+^\infty([0,L_0]) = \nset{ G\in \mathcal{L}^\infty([0,L_0]); \text{ for all }X\in [0,L_0]: G(X)>0 }\,.
\end{align*}
The first assumption (G1) is crucial in order to ensure global existence of solutions since it will allow us to construct global sub- and supersolutions with $G_0(X,t)\leq G(X,t) \leq G_1(X,t)$ for all $X\in [0,L_0]$ and $t\in [0,T]$. This assumption can be interpreted as saying that stress and nutrient concentration have an effect on the growth rate but that the effect is limited in the sense that there is a maximal growth rate for $G>1$ and a maximal absorption rate for $G<1$ which lead to exponential but finite growth on finite time intervals.

We define the autonomous ordinary differential equation 
\begin{align*}
 \dot{G}(t) = \widehat{\mathcal{G}}(G(t))\,,\quad G(0) = 1
\end{align*}
in $\mathcal{L}^\infty([0,L_0])$ where $B_1=B(G_0,R_1)$ with~\eqref{boundsonG},
\begin{align*}
 \widehat{\mathcal{G}} : B_1 \to \mathcal{L}^\infty([0,L_0])\,,\quad \widehat{\mathcal{G}}(G)(X) = \mathcal{G}(G(X), S(G), N(G)(X), X)
\end{align*}
and where the constitutive function $\mathcal{G}$ on the right-hand side satisfies the following conditions:

\begin{itemize}[leftmargin=.4in]
 \item [(G1)] There exist locally Lipschitz continuous functions $\widehat{\mathcal{G}}_0$, $\widehat{\mathcal{G}}_1: \mathcal{L}_+^\infty([0,L_0]) \to \mathcal{L}^\infty([0,L_0])$ such that $\widehat{\mathcal{G}}_i(G)(X) = \mathcal{G}_i(G(X),X)$ for locally Lipschitz continuous functions $\mathcal{G}_i:(0,\infty)\times [0,L_0]\to \R$ such that the ordinary differential equations 
 \begin{align*}
  \dot{G}_i = \widehat{\mathcal{G}}_i(G_i)\,,\quad i=1,\,2\,,\quad G_i(0)=1
 \end{align*}
have global solutions $G_i \in C^1( [0,T]; \mathcal{L}_+^\infty([0,L_0]))$ with 
\begin{align*}
 G_0(X,t) < G_1(X,t) \quad \text{ for all } t\in (0,T],\, X\in [0,L_0]
\end{align*}
and 
\begin{align*}
 G_{min} & = \inf_{[0,L_0]} \inf_{[0,T]} G_0(X,t)>0\,,\\
 G_{max} & = \sup_{[0,L_0]} \sup_{[0,T]} G_1(X,t)<\infty\,.
\end{align*}

\item [(G2)] $\mathcal{G}:\R_{>0}\times \R\times \R_{\geq 0}\times [0,L_0]\to \R$ and for all $X\in [0,L_0]$
 \begin{align*}
  \mathcal{G}(\cdot, \cdot, \cdot, X) : \R_{>0}\times \R\times  \R_{\geq 0}\to \R
 \end{align*}
is continuous and for all $(G, S, N)\in \R_{>0}\times\R\times \R_{\geq 0}$, $\mathcal{G}(G, S, N,\cdot)$ is measurable. 

 \item [(G3)] For all $\Gamma_i$, $\Sigma_i$, $H_i\in \R$, $i=1,2$, with $0< \Gamma_0 < \Gamma_1$, $\Sigma_0 < \Sigma_1$, $0\leq H_0<H_1$ and for all $X\in [0,L_0]$ the function 
\begin{align*}
 \mathcal{G}(\cdot, \cdot, \cdot, X):[\Gamma_0, \Gamma_1]\times [\Sigma_0, \Sigma_1] \times [H_0, H_1]\to \R\,,\quad (G, S, N) \mapsto \mathcal{G}(G, S, N, X)
\end{align*}
is uniformly Lipschitz continuous and bounded with constants $L_{\mathcal{G}}$ and $M_{\mathcal{G}}$ and the constants are independent of $X$.
 
 \item [(G4)] For all $G>0$, $S\in \R$, $N\geq 0$, $X\in [0,L_0]$
 \begin{align*}
  \mathcal{G}_0(G,X) < \mathcal{G}(G, S, N, X) < \mathcal{G}_1(G,X)\,.
 \end{align*}
\end{itemize}

Note, that by Lemma~\ref{lem:NisH2} the nutrients concentration $n_G$ is continuous and nonnegative and the pointwise evaluation of $N_G = n_G \circ y_G$ in the growth dynamics is justified by the Sobolev embedding theorem.

\begin{theorem}\label{thm:ExistenceFullCoupling}
 Suppose that $T>0$, that the free energy density $W$ satisfies (W1)--(W4) and  that the growth rate $\widehat{\mathcal{G}}$ satisfies (G1)--(G4). Then there exists a unique solution $G$ in the sense of Definition~\ref{def:solutionG} on $[0,T]$.
\end{theorem}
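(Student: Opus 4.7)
The plan is to realise $\dot{G} = \widehat{\mathcal{G}}(G)$, $G(0)=1$, as an abstract autonomous ODE in the Banach space $\mathcal{L}^\infty([0,L_0])$ and apply Theorem~\ref{PicardLindeloef} iteratively on time-steps whose length depends only on the global constants. The two ingredients are: (i) on any ball in $\mathcal{L}^\infty$ whose elements are uniformly bounded away from $0$ and $\infty$, $\widehat{\mathcal{G}}$ is bounded and Lipschitz continuous, with constants controlled by those pointwise bounds; (ii) the comparison hypotheses (G1) and (G4) confine the solution to the strip $[G_{min},G_{max}]$, so the \emph{same} step length can be reused indefinitely and finitely many Picard--Lindel\"of steps cover $[0,T]$.

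For the local step I would fix $\bar G\in\mathcal{L}^\infty([0,L_0])$ with $\bar G\in[\tilde\Gamma_0,\tilde\Gamma_1]\subset(0,\infty)$ pointwise, pick $R_1\in(0,\tilde\Gamma_0)$ and set $B_1=B(\bar G,R_1)$, so that every $G\in B_1$ satisfies $\Gamma_0\le G\le \Gamma_1$ with $\Gamma_0=\tilde\Gamma_0-R_1>0$ in the sense of~\eqref{boundsonG}. Lemma~\ref{lem:BoundStress} provides $S(G)\in[\Sigma_0,\Sigma_1]$, Lemma~\ref{lem:LipS} gives the Lipschitz estimate $|S(G_1)-S(G_2)|\le L_{S,G}\|G_1-G_2\|_{\mathcal{L}^\infty}$, and Lemmas~\ref{lem:NisH2}--\ref{lem:LipN} yield nonnegativity, a uniform bound $N(G)\in[0,M_{N,G}]$ and $\|N(G_1)-N(G_2)\|_{C^0}\le L_{N,G}\|G_1-G_2\|_{\mathcal{L}^\infty}$. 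Applying (G3) on the compact box $[\Gamma_0,\Gamma_1]\times[\Sigma_0,\Sigma_1]\times[0,M_{N,G}]$ and combining pointwise in $X$ gives
\begin{align*}
|\widehat{\mathcal{G}}(G_1)(X)-\widehat{\mathcal{G}}(G_2)(X)|
&\le L_{\mathcal{G}}\bigl(|G_1(X)-G_2(X)|+|S(G_1)-S(G_2)|+|N(G_1)(X)-N(G_2)(X)|\bigr).
\end{align*}
Taking the supremum over $X$ produces a Lipschitz constant $K_0=L_{\mathcal{G}}(1+L_{S,G}+L_{N,G})$, and (G3) immediately yields the bound $\|\widehat{\mathcal{G}}(G)\|_{\mathcal{L}^\infty}\le M_0=M_{\mathcal{G}}$. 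Theorem~\ref{PicardLindeloef} then furnishes a unique local solution on any interval of length $T_1\le\min(R_1/M_0,1/(2K_0))$; crucially, $T_1$ depends only on the global constants, not on the centre $\bar G$.

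To iterate without shrinking $T_1$, I would use (G1)+(G4) as a pointwise comparison. For any $t_\ast$ at which a solution $G\in C^1([0,t_\ast];\mathcal{L}^\infty)$ exists, continuity of the evaluation functional $\mathrm{ev}_X\colon\mathcal{L}^\infty\to\R$ shows that $t\mapsto G(X,t)$ is of class $C^1$ for every $X\in[0,L_0]$ and solves the scalar ODE $\dot g(t)=\mathcal{G}(g(t),S(G(\cdot,t)),N(G(\cdot,t))(X),X)$, $g(0)=1$. By (G4) this rate is strictly between $\mathcal{G}_0(g(t),X)$ and $\mathcal{G}_1(g(t),X)$, while $G_0(X,\cdot)$ and $G_1(X,\cdot)$ from (G1) solve the corresponding bounding scalar ODEs with the same initial value. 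A one-dimensional comparison argument therefore yields $G_{min}\le G_0(X,t)\le G(X,t)\le G_1(X,t)\le G_{max}$ on $[0,L_0]\times[0,t_\ast]$. In particular $G(t_\ast)\in[G_{min},G_{max}]$ pointwise, so I can restart the local step of paragraph two with a fixed radius $R_1\in(0,G_{min})$, obtaining a further extension of length $T_1$. Since $T_1$ is independent of $t_\ast$, finitely many such extensions reach time $T$, and uniqueness on each sub-interval concatenates to global uniqueness on $[0,T]$.

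The main obstacle I expect is the rigorous scalar comparison. One must verify that the pointwise ODE for $G(X,\cdot)$ is genuinely a scalar ODE with a locally Lipschitz right-hand side once $S(G(\cdot,t))$ and $N(G(\cdot,t))(X)$ are viewed as given continuous functions of $t$; this is where the continuity of $t\mapsto S(G(t))$ and $t\mapsto N(G(t))(X)$ (inherited from $G\in C^1([0,t_\ast];\mathcal{L}^\infty)$ and the Lipschitz results of Sections~\ref{sec:stresslip}--\ref{sec:nutrientslip}, together with the embedding $H^1\hookrightarrow C^0$) is essential. The strict inequalities in (G4) are precisely what is needed to upgrade the inequality of rates to an inequality of solutions via a standard Gronwall-type argument on $G_1(X,t)-G(X,t)$ and $G(X,t)-G_0(X,t)$, which are continuous, vanish at $t=0$, and whose initial derivatives are strictly positive.
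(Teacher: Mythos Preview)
Your proposal is correct and follows essentially the same route as the paper's proof: establish uniform Lipschitz and boundedness estimates for $\widehat{\mathcal{G}}$ on a fixed ball via Lemmas~\ref{lem:BoundStress}, \ref{lem:LipS}, \ref{lem:NisH2} and \ref{lem:LipN} to obtain the constants $K_0=L_{\mathcal{G}}(1+L_{S,G}+L_{N,G})$ and $M_0=M_{\mathcal{G}}$, invoke Theorem~\ref{PicardLindeloef} for a local solution of uniform duration $T_1$, and then use the strict pointwise comparison (G4) against the global sub- and supersolutions from (G1) to trap the solution in $[G_{min},G_{max}]$ and iterate. The paper organises the comparison step as a first-touching contradiction argument rather than via Gronwall, and it sets up the fixed balls $B_0\subset B_1$ once at the outset (with radius $G_{min}/2$ playing the role of your $R_1$), but these are cosmetic differences only.
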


\begin{proof}
Global existence will be shown based on Picard-Lindel\"of's theorem on the local existence together with the existence of sub- and supersolutions which follows from (G1). In fact, the sub- and supersolutions guarantee that the solution is contained in a closed ball $B_0$ for $t\in [0,T]$. In view of the lower bound in (G1), there exists a closed ball $B_1$ with the same center and a larger radius which is contained in $\mathcal{L}_+^\infty([0,L_0])$. Therefore, there exists a positive radius $r$ such that for every $G\in B_0$ the closed ball $B(G,r)$ is contained in $B_1$. Since all Lipschitz constants are uniformly bounded on $B_1$, one obtains a uniform bound on the local time of existence $T_1$ in Picard-Lindel\"of's theorem using $G_0\in B_0$ as an initial condition.

\textit{Step~1: Existence of global sub- and supersolutions.} By (G1), the solutions of the ordinary differential equations $\dot{G}_i = \widehat{\mathcal{G}}_i(G_i)$, $G_i(0)=1$, $i=1,2$, exist in the Banach space $C^1([0,T];\mathcal{L}^\infty([0,L_0]))$ and satisfy
\begin{align*}
 G_{min} & = \inf_{[0,L_0]} \inf_{[0,T]} G_0(X,t)\in(0,\infty)\,,\\
 G_{max} & = \sup_{[0,L_0]} \sup_{[0,T]} G_1(X,t)\in (0,\infty)\,.
\end{align*}
Define the closed balls
\begin{align*}
 B_0  = \overline{B_{\mathcal{L}^\infty}}\Barg{ \frac{G_{min}+G_{max}}{2}, \frac{G_{max}-G_{min}}{2} } \,,\quad 
 B_1  = \overline{B_{\mathcal{L}^\infty}}\Barg{ \frac{G_{min}+G_{max}}{2}, \frac{G_{max}}{2} }
\end{align*}
and note that $B_1\subset \mathcal{L}^\infty_+([0,L_0])$. For each $G\in B_0$, the closed ball $\overline{B_{\mathcal{L}^\infty}}( G, G_{min}/2)$ is contained in $B_1$. Assumption (G2) ensures that $\mathcal{G}$ is a Carath\'eodory function. For $G\in B_1$ the function $N(G)\in C^0([0,L_0])$ is continuous and measurable and therefore the function $X\mapsto \mathcal{G}(G(X), S(G), N(G)(X),X)$ is measurable. 

\textit{Step~2: Verification of the assumptions in Picard-Lindel\"of's theorem.} Set $t_0=0$, $G_0=1$. By Lemma~\ref{lem:BoundStress} applied to $B_1$, there exist $P_0$, $P_1$, $\Sigma_0$, $\Sigma_1$ with $P_0>0$, $P_0<P_1$, $\Sigma_0<\Sigma_1$ such that for all $G\in B_1$ the associated stress $S(G)$ and deformation gradient $\phi'(G)$ are uniformly bounded with $S(G)\in [\Sigma_0, \Sigma_1]$ and $\phi'(G)\in [P_0,P_1]$ on $[g(0),g(L_0)]$. Lemma~\ref{lem:LipS} asserts that the map $S:B_1\to \R$, $G\mapsto S(G)$ is Lipschitz continuous and bounded with constants $L_{S,G}$ and $M_{S,G}$, respectively. Lemma~\ref{lem:LipN} implies that $N:B_1 \to C^0([0,L_0])$ is Lipschitz continuous and bounded with constants $L_{N,G}$ and $M_{N,G}$. Moreover, Lemma~\ref{lem:NisH2} guarantees that we may choose $H_0=0$ and a constant $H_1$ which only depends on the data of the problem with $N_G\in [H_0,H_1]$ on $[0,L_0]$. Finally, we may define $\Gamma_0=G_{min}/2>0$ and $\Gamma_1=G_{max}+G_{min}/2$ and with these constants $B_1$ satisfies~\eqref{boundsonG}.

With this choice of $\Gamma_i$, $\Sigma_i$ and $H_i$, $i=1,2$, assumption (G3) guarantees that $\mathcal{G}$ is uniformly Lipschitz continuous in the first three arguments and bounded on $[\Gamma_0,\Gamma_1]\times [\Sigma_0,\Sigma_1]\times [H_0,H_1]$ and the corresponding constants are independent of $X$. The right-hand side of the ordinary differential equation is given by 
\begin{align*}
 \widehat{\mathcal{G}}(G)(X) = \mathcal{G}( G(X), S(G), N(G)(X), X)
\end{align*}
and satisfies for all $G_1$, $G_2\in B_1$,
\begin{align*}
& \| \widehat{\mathcal{G}}(G_1) - \widehat{\mathcal{G}}(G_2) \|_{\mathcal{L}^\infty([0,L_0])} \\ & \quad \leq \sup_{X\in [0,L_0]} | \mathcal{G}(G_1(X), S(G_1), N(G_1)(X), X) - \mathcal{G}(G_2(X), S(G_2), N(G_2)(X), X) | \\ & \quad \leq \sup_{X\in [0,L_0]} L_{\mathcal{G}} \bsqb{ | G_1(X) - G_2(X) | + |S(G_1) - S(G_2)| + |N(G_1)(X) - N(G_2)(X)| } \\ &\quad  \leq L_{\mathcal{G}} (1 + L_{S,G} + L_{N,G}) \| G_1 - G_2 \|_{\mathcal{L}^\infty([0,L_0])}.
\end{align*}
Furthermore, for all $G\in B_1$, 
\begin{align*}
 \| \widehat{\mathcal{G}}(G) \|_{\mathcal{L}^\infty([0,L_0])} & = \sup_{X\in [0,L_0]} | \mathcal{G}(G(X), S(G), N(G)(X), X) | \leq M_{\mathcal{G}}\,.
\end{align*}
Define $K_0 = L_{\mathcal{G}} (1 + L_{S,G} +L_{N,G})$, $M_0 = M_{\mathcal{G}}$. Suppose that $T_1>0$ is small enough such that the inequalities 
\begin{align*}
 K_0 T_1 <1\,,\quad T_1 \leq \min\{ T, \frac{1}{2} G_{min}/M_0 \}
\end{align*}
hold. By Picard-Lindel\"of's theorem in Theorem~\ref{PicardLindeloef} there exists a unique solution $G\in C^1([0,T_1];\mathcal{L}^\infty([0,L_0]))$ of the ordinary differential equation $\dot{G} = \widehat{\mathcal{G}}(G)$.

\textit{Step~3: Uniform a~priori bounds.} For all $X\in [0,L_0]$ the function $G_{;X}:[0,T_1]\to \R$, $t\mapsto G_{;X}(t) = G(X,t)$ satisfies 
\begin{align*}
 \dot{G}_{;X}(t) = \mathcal{G}( G_{;X}(t), S(G(t)), N(G(t))(X), X)\text{ and }G_{;X}(0) = 1\,.
\end{align*}
By Lemmas~\ref{lem:LipS} and~\ref{lem:LipN}, the functions $t\mapsto S(G(t))$ and $t\mapsto N(G(t))$ are continuous since for $t_1$, $t_2\in [0,T_1]$
\begin{align*}
 | S(G(t_1)) - S(G(t_2)) |& \leq L_{S,G} \| G(t_1) - G(t_2) \|_{\mathcal{L}^\infty([0,L_0])}\,, \\  \| N(G(t_1)) - N(G(t_2)) \|_{C^0(0,L_0])}& \leq L_{N,G} \| G(t_1) - G(t_2) \|_{\mathcal{L}^\infty([0,L_0])}
\end{align*}
and the right-hand side converges to zero as $t_1\to t_2$ since $G\in C^1([0,T_1],\mathcal{L}^\infty([0,L_0]))$. Since $\mathcal{G}$ is uniformly Lipschitz continuous, $G_{;X}$ is in fact the unique solution of the scalar ordinary differential equation 
\begin{align*}
 \dot{H}(t) = \widetilde{\mathcal{G}}_X(H, t)\text{ with }\widetilde{\mathcal{G}}_X(H,t) = \mathcal{G}(H, S(G(t)), N(G(t))(X), X)\,,\quad H(0)=1\,,
\end{align*}
where $\widetilde{G}_X$ is Lipschitz continuous in the first argument and continuous as a function on its domain. By (G4), $\mathcal{G}_0(G,X) < \mathcal{G}(G, S(G(t)), N(G(t))(X), X) < \mathcal{G}_1(G,X)$ and by definition the subsolution $G_0(X,t)$ and the supersolution $G_1(X,t)$ satisfy $G_{0;X}(0)=G_{1;X}(0)=G_{;X}(0)=1$. The following argument shows that $G_{0;X}(\cdot)<G_{;X}(\cdot)<G_{1;X}(\cdot)$ for all $X\in [0,L_0]$ on $(0,T_1]$. 

Indeed, since $\dot{G}_{0;X}(0) < \dot{G}_{;X}(0)$, there exists an $\epsilon>0$ such that $G_{0;X}(t) < G_{;X}(t)$ on $(0,\epsilon)$. We show that this is true on $[0,T_1]$.

If this is not the case, there exists a $t_1\in [\epsilon,T_1]$ such that $G_{0;X}(t_1) = G_{;X}(t_1)$ and $G_{0;X}(t)<G_{;X}(t)$ on $(0,t_1)$. Thus,
\begin{align*}
 \dot{G}_{0;X}(t_1) & = \lim_{s\nearrow t_1} \frac{G_{0;X}(t_1) - G_{0;X}(s)}{t_1 - s} = \lim_{s\nearrow t_1} \frac{G_{;X}(t_1) - G_{0;X}(s)}{t_1 - s}  \\ & \geq  \lim_{s\nearrow t_1} \frac{G_{;X}(t_1) - G_{;X}(s)}{t_1 - s}  = \dot{G}_{;X}(t_1)\,.
\end{align*}
However, in view of (G4) and the assumption $G_{0;X}(t_1) = G_{;X}(t_1)$, 
\begin{align*}
 \dot{G}_{0;X}(t_1) & = \mathcal{G}_0(G_{0;X}(t_1), X)  < \mathcal{G}(G_{0;X}(t_1), S(G(t_1)), N(G(t_1))(X), X) \\ & = \mathcal{G}(G_{;X}(t_1), S(G(t_1)), N(G(t_1))(X), X) = \dot{G}_{;X}(t_1)\,,
\end{align*}
a contradiction. With the analogous argument for the supersolution $G_{1;X}$ we conclude that for all $t\in [0,T_1]$ and $X\in [0,L_0]$ 
\begin{align*}
 G_{0;X}(t) < G_{;X}(t) < G_{1;X}(t)
\end{align*}
and hence $G(t) \in B_0$ for all $t\in [0,T_1]$.

\textit{Step~4: Global existence.} By Step~3, $G(T_1)\in B_0$ and we may set $t_0=T_1$, $T_2 = \min\{T, 2T_1\}$ and $G_0=G(T_1)$ and argue as in Step~2 to continue the solution and to obtain the existence of a solution in $C^1([0,T_2], \mathcal{L}^\infty([0,L_0]))$. In view of Step~3, this solution satisfies $G(t) \in B_0$ for all $t$ and after finitely many steps we have $T_k=T$, that is, the global solution has been constructed. Uniqueness of the solution follows from uniqueness of solutions in Picard-Lindel\"of's theorem. 
\end{proof}

\begin{example}\label{ex:Gforfull}
 In Example~\ref{ex:fullcoupling} the dynamics of the growth process is determined from 
\begin{align*}
 \widehat{\mathcal{G}}:\mathcal{L}_+^\infty([0,L_0]) \to \mathcal{L}^\infty([0,L_0])\,,\quad 
 \widehat{\mathcal{G}}(G)(X) = \gamma(X)\mu(S(G))\eta(N(G)(X)) G(X)
\end{align*}
where $S(G)\in \R$ and $N(G)\in H^2(0,L_0)$ denote the stress and the nutrient concentration induced by the growth via $G$. Here, we generalize the assumptions in Example~\ref{ex:fullcoupling} and assume that $\gamma\in \mathcal{L}^\infty([0,L_0];[\gamma_0,\gamma_1])$, and that $\mu\in W^{1,\infty}(\R;[\mu_0,\mu_1])$ and $\eta\in W^{1,\infty}(0,\infty;[\eta_0,\eta_1])$ are increasing, where $0<\gamma_0<\gamma_1$, $\mu_0<\mu_1$, and $0\leq \eta_0\leq \eta_1$. 

In order to obtain the existence of a unique global solution on $[0,T]$ with $T>0$ it remains to verify the assumptions (G1)--(G4). To verify (G1) define for $G\in \mathcal{L}_+^\infty([0,L_0])$
\begin{align*}
 \widehat{\mathcal{G}}_0(G)(X) & = \mathcal{G}_0(G(X),X) =  (\gamma_0 \mu_0\eta_0-1) G(X)\,,\\ \widehat{\mathcal{G}}_1(G)(X) & =\mathcal{G}_1(G(X),X) = (\gamma_1 \mu_1 \eta_1+1) G(X)\,.
\end{align*}
The functions $\mathcal{G}_i:(0,\infty)\times [0,L_0]\to \R$, $i=1,2$, are in fact independent of the second variable $X$ and globally Lipschitz continuous in the first variable $G$ and the Lipschitz constant is independent of $X\in [0,L_0]$. The associated equations $\dot{G}_i = \mathcal{G}_i(G)$, $G_i(0)=1$, $i=1,2$, are systems with pure growth that have global solutions as in Example~\ref{ex:puregrowth}. Since $G_i(0)=1$, the solutions are positive and the assumptions on the global lower and upper bounds in (G1) are satisfied. Moreover, the explicit formulas for the solutions verify the strict inequality $G_0<G_1$ on $(0,T]\times [0,L_0]$.  

Concerning (G2), for all $X$ fixed the function $(G,S,N)\mapsto \mathcal{G}(G, S, N, X) = \gamma(X) \mu(S) \eta(N) G$ is continuous in view of the assumptions concerning $\mu$ and $\eta$ and for all $(G, S, N)$ the function $X\mapsto \mathcal{G}(G, S, N, X) = \gamma(X)\mu(S)\eta(N)G$ is measurable since $\gamma$ is measurable. 

To verify (G3), we fix $\Gamma_i$, $\Sigma_i$, $H_i$ with the properties in (G3) and $X\in [0,L_0]$. Then, as a function on $[\Gamma_0,\Gamma_1]\times [\Sigma_0,\Sigma_1]\times [H_0,H_1]$, 
\begin{align*}
 (G,S,N) \mapsto \mathcal{G}(G, S, N, X) = \gamma(X) \mu(S) \eta(N) G
\end{align*}
is Lipschitz continuous since $\mu$ and $\eta$ are Lipschitz continuous and bounded and the function is bounded since $\gamma$, $\mu$, and $\eta$ are bounded. Moreover, the corresponding constants $L_{\mathcal{G}}$ and $M_{\mathcal{G}}$ are independent of $X$.

Finally, (G4) is satisfied since for $G>0$
\begin{align*}
 \mathcal{G}_0(G) = (\gamma_0\mu_0\eta_0-1) G < \gamma_0 \mu_0\eta_0 G \leq \gamma(X) \mu(S) \eta(N) G = \mathcal{G}(G, S, N, X)
\end{align*}
and the calculation for the upper bound is analogous. Consequently, the system in Example~\ref{ex:fullcoupling} has a unique global solution for any stored energy density $W$ which satisfies (W1)--(W4).
\end{example}

\section{Conclusions}\label{sec:conclusions}

In this article, the focus of the analysis is a general framework that identifies abstract conditions on the free energy density $W$ and the growth dynamics $\mathcal{G}$ that lead to a well-posed model with global solutions. In~\cite{Bangert2022}, initially motivated by a view towards numerical schemes, very specific assumptions were made concerning $W$ and $\mathcal{G}$. In particular the cases of materials with two parts as in Example~\ref{ex:puregrowth} or, more generally, with finitely many parts were discussed in great detail. It is noteworthy, that these cases are not included in the general theory presented in this article since the assumptions $W$, $W_p\in C^0([0,L_0]\times (0,\infty))$ in (W1) are not satisfied. Here it is important to note that these assumptions are not needed in order to obtain the Euler-Lagrange equation. In fact, the existence of a minimizer in $W^{1,1}$ and the validity of the Euler-Lagrange equation for almost all $X\in [0,L_0]$ follow if $\partial_p W$ is merely a Carath\'eodory function. In the special case of coefficients that are piecewise constant the solution $\phi$ is continuous on $[g(0),g(L_0)]$ and piecewise affine and continuous dependence of the stress and the nutrients can be verified by explicit calculations since~\eqref{reactiondiffusion} is a boundary value problem for an ordinary differential equation with constant coefficients which can be solved explicitly. We sketch the arguments in Appendix~\ref{sec:appendix}.

If the growth dynamics depend only on the stress but not on a concentration of nutrients, the assumptions in Theorem~\ref{thm:ExistenceFullCoupling} can be weakened as well. In fact, the property that $\phi_G' \circ g$ is continuous in the proof of Lemma~\ref{lem:LipS} can be replaced by measurability which can be obtained appealing to measurability of implicitly defined functions, see, e.g., \cite[Corollary~18.8]{AliprantisBorderBook2006}. 

Further generalizations concern the dependence of the growth dynamics on $N$. For example, the function $\eta$ in Example~\ref{ex:fullcoupling} is assumed to be nonnegative and therefore the choice of $\eta(N)=0$ if $N<N_c$ for some critical value $N_c>0$ is included leading to a necrotic core in which no further growth or absorption happens. In particular in view of numerical schemes or in the spirit of the situation with two materials in Example~\ref{ex:puregrowth}, the pointwise evaluation of $\eta$ can be replaced by local averages leading to piecewise constant approximations of the concentration of the nutrients.

Concerning the general assumptions on the growth dynamics, the assumptions (G1) and (G4) are necessary in order to obtain global existence on $[0,T]$. In a certain sense, as illustrated in Example~\ref{ex:Gforfull}, the assumptions imply linear growth of $\mathcal{G}$ in $G$, a fact that is expected for global existence. To illustrate that global existence may fail without the presence of linear bounds consider the situation in Example~\ref{ex:stresscoupling} in which coupling is given only through the stress in the system and $\mathcal{G}(G) = \mu(S(G)) G^2$. Suppose that the material is homogeneous, $W(X,p) = W(p)$, and that $G(t)$ exists for $t\in (0,T)$ for some $T>0$. Since $G(t)$ is independent of $X$, $g(X,t) = G(t)X$ and since $W$ is homogeneous and strictly convex, the minimizer $\phi_G$ is affine, $\phi_G(z,t) = \ell_0 z/ (G(t)  L_0)$. The stress is constant and given by $\partial_pW(\phi'(z)) = \partial_p W(\ell_0/G(t)/ L_0)$. The growth dynamics are therefore given by 
\begin{align*}
 \dot{G}(t) = \mu(\partial_p W(\ell_0/G(t)/L_0) G(t)\,.
\end{align*}
By assumption, $W_p$ is strictly in creasing and invertible and we may choose for some exponent $\delta>0$ the increasing function $\mu(S) = (\partial_p W^{-1}(S))^\delta$ which leads to the equation $\dot{G}(t) = (\ell/ L_0)^\delta G(t)^{2-\delta}$. Depending on the value of $\delta$ one obtains sublinear or superlinear growth and, in the latter case, only local existence of solutions. 

The true challenge remains to address the higher dimensional case. Our one-dimensional analysis uses the fact that every $L^\infty$ function has an anti-derivative and that the natural configuration can be represented by an interval in a critical way. So far, explicit examples in two dimensions have been given in special geometries, for example in polar coordinates. This approach is very promising. However, one of the major open problems is related to the question of whether a minimizer of a nonlinear variational problem is radial. The answer to this question will depend on the boundary conditions and on the state of the stress since compression may lead to buckling effects. A first approach could be to follow the lines of~\cite{KruzikMelchingStefanelliESAIM2020DislocationFreePlasticity} and consider compatible growth. 

\bigskip 

\textbf{Acknowledgment:} The first author was supported by the DFG Research Training Group 2339 Interfaces, Complex Structures, and Singular Limits.

\bigskip
  
\begin{appendix}
\section{The case of piecewise homogeneous materials}\label{sec:appendix}

As in Example~\ref{ex:puregrowth} we suppose that $\Omega = [0,L_0]$, $X_I\in (0,L_0)$ and that the material is homogeneous on $[0,X_I]$ and $(X_I,L_0]$, respectively. The following arguments can be extended to the case that the material is homogeneous on $k$ distinct subintervals that consitute the material body in its reference configuration. In this situation, the growth equation
\begin{align*}
\dot{G}(t) = \widehat{\mathcal{G}}(G(t)),\,G(0)=1\text{ with }\widehat{\mathcal{G}}(G)(X) = \mathcal{G}(G(X), S(G), N(G)(X), X)
\end{align*}
reduces to two ordinary differential equations with $\mathcal{G}(G,S,N,\cdot) = \mathcal{G}(G,S,N,0)$ on $[0,X_I]$ and $\mathcal{G}(G,S,N,\cdot) = \mathcal{G}(G,S,N,L_0)$ on $(X_I,L_0]$. In particular, $G(X,t) = \chi_{[0,X_I]}(X)G(0,t) + \chi_{(X_I, L_0]}(X) G(L_0,t)$ and we may identify the growth tensor $G(\cdot, t)$ with the two values $G(0,t)$ and $G(L_0,t)$. In order to draw parallels to the general case treated before, we define 
\begin{align*}
 \mathcal{S}^0 & = \{ G=G_0\chi_{[0,X_i]} + G_1 \chi_{(X_I,L_0]},G_0, G_1\in \R\} \\  \mathcal{S}^1 & = \{g\in W^{1,\infty}(0,L_0), g'=G_0\chi_{[0,X_i]} + G_1 \chi_{(X_I,L_0]},G_0, G_1\in \R\}
\end{align*}
and identify all elements in $\mathcal{S}^1$ with the continuous representative. Consequently the ordinary differential equation can be formulated in the Banach space $(\mathcal{S}^0,\|\cdot\|_\infty)$ with $\widehat{\mathcal{G}}:\mathcal{S}^0\to \mathcal{S}^0$ and we require for all $G\in \mathcal{S}^0$ that $\widehat{\mathcal{G}}(G)\in \mathcal{S}^0$. Moreover, for $G_0$, $G_1>0$ the potential $g(\cdot, t)$ with $\partial_X g(X,t)=G(X,t)$ is invertible and bi-Lipschitz.

\begin{example}
In analogy to Example~\ref{ex:fullcoupling} we define 
\begin{align*}
 \widehat{\mathcal{G}}(G)(X) = \gamma(X) \mu(S(G)) \eta(N(G)(X)) G(X)
\end{align*}
and note that this expression defines an element in $\mathcal{S}^0$ if we choose $\gamma = \gamma_0\chi_{[0,X_I)} + \gamma_1 \chi_{(X_I,L_0]}$ with $\gamma_0$, $\gamma_1\in\R$ and if we use a suitable definition for $\eta(N(G)(X))$. As an example we consider the choice of $N$ as a pull-back of a local average of the density of the nutrients $n\in H^1(0,\ell_0)$ in the current configuration $(0,\ell_0)$ given by~\eqref{reactiondiffusion} in the discussion below.
\end{example}

In view of Picard-Lindel\"of's theorem stated in Theorem~\ref{PicardLindeloef} it is crucial to prove that the right-hand side of the ordinary differential equation depends on the function $G\in \mathcal{S}^0$ in a Lipschitz continuous way and that all estimates depend only on global constants. We identify $G\simeq (G_0,G_1)\in [\Gamma_0, \Gamma_1]^2$. Suppose that $G\in \mathcal{S}^0$ is given and that $g\in \mathcal{S}^1$ with 
\begin{align*}
 g(X) = \left\{\begin{array}{cl} XG_0 & \text{ if } X\in [0,X_i]\,,\\ X_I G_0 + (X-X_I)G_1 & \text{ if }X\in (X_I, L_0]\,. \end{array} \right.
\end{align*}
Define $z_I = g(X_I)$ and note that the induced stored energy density $W_G$ is homogeneous on $[g(0),z_I]$ and $(z_I,g(L_0)]$ with 
\begin{align*}
 W_G(z,p) = \kappa_0 W_0(p) \chi_{[g(0),z_I]}(z) + \kappa_1 W_0(p) \chi_{(z_I,g(L_0)]}(z)\,.
\end{align*}
Since $W_0$ is strict convex, $\phi_G$ is piecewise affine and the four constants $A_0$, $A_1$, $B_0$, $B_1\in \R$ in the expression 
\begin{align*}
\phi_G(z) = (A_0 + B_0 z) \chi_{[g(0),z_I]}(z) + (A_1 + B_1 z) \chi_{(z_I,g(L_0)]}(z)
\end{align*}
are determined from the boundary condition, the continuity of $\phi_G$ in $z_I$ and the fact that the stress in the system is constant. More explicitly, these four conditions lead to the nonlinear system with four unknows and the four equations
\begin{alignat*}{2}
\Phi^1= A_0 + B_0 g(0) & = 0\,, & \qquad\quad  \Phi^3=A_0 + B_0 z_I - (A_1 + B_1 z_I) & = 0 \,,\\ \Phi^2=A_1 + B_1 g(L_0) - \ell_0 & = 0\,, & \Phi^4=\kappa_0 \partial_p W_0(B_0) - \kappa_1 \partial_p W_0(B_1)& =0\,.
\end{alignat*}
Corollary~\ref{cor:ExistenceIG} guarantees the existence of a unique solution $A_0$, $A_1$, $B_0$, $B_1$ for each choice of $G\simeq(G_0,G_1)\in [\Gamma_0, \Gamma_1]^2$. In order to prove that these coefficients are continuously differentiable in $G$ we interpret a solution of the system as a zero of the map $\Phi:(\Gamma_0/2, 2\Gamma_1)^2\times \R^4 \to \R^4$, where $g(0)=0$, $g(L_0)$ and $z_I$ are smooth functions in $G_0$ and $G_1$. Thus, the map $(G_0, G_1, A_0, A_1, B_0, B_1)\mapsto (\Phi^i(G_0,G_1,A_0,A_1,B_0,B_1))_{i=1,\ldots,4}$ is continuously differentiable with $\partial \Phi/\partial(A_0,A_1,B_0,B_1)$ invertible, a fact that can be verified by expanding the determinant along the second column in the matrix in view of $g(0)=0$ and $\partial_{pp} W_0>0$. The theorem on implicit functions shows that the maps $A_i$, $B_i:(\Gamma_0/2,2\Gamma_1)^2\to \R^2$, $i=1,2$, are continuously differentiable and the explicit formula shows that the deriviatives are uniformly bounded.

The foregoing calculations prove that the map $S:[\Gamma_0,\Gamma_1]^2\to \R$, $G\mapsto S(G)$ is $C^1$ and thus globally Lipschitz continuous since $S=\kappa_0\partial_p W_0(\phi_G'(\cdot))|_{[0,z_I]}$ and all functions of $G_0$ and $G_1$ appearing in this expression are $C^1$ with uniform bounds.

\begin{figure}[t]
\centering
\begin{tikzpicture}[scale = 1.8]
\draw (0,0)--(2,0);
\draw (0,0)--(0,2);
\foreach \g/\y in {{G_0}/0.5,{G_1}/1.5} \draw(-0.05,\y) -- (0.05,\y) node[left]{$\g\,\,$};
\draw(-0.05,1) -- (0.05,1) node[left]{$1\,\,$};
\foreach \x/\y in {0/0 , 1/{X_I}, 2/{L_0}} {
  \draw (\x,-0.05)--(\x,0.05) ;
  \draw(\x,-0.1) node[below]{$\y$};
}
\draw[blue, line width=1.2] (0,0.5)--(1,0.5);
\draw[blue, line width=1.2] (1,1.5)--(2,1.5);
\end{tikzpicture}
\hfil
\begin{tikzpicture}[scale = 3.0]
\draw (0,0) -- (1.82,0) node[right]{$z$};
\draw (0,0) -- (0,1.2) node[left]{$\phi_G$};
\foreach \x/\y in {0/0 , 0.72/{z_I}, 1.82/{g(L_0)}} {
  \draw (\x,-0.05)--(\x,0.05) ;
  \draw(\x,-0.1) node[below]{$\y$};
}
\draw[blue, line width=1.2] (0,0) -- (0.72,0.54) -- (1.82,1);
\foreach \g/\y in {{0}/0.0,{\ell_0}/1.0} \draw(-0.03,\y) -- (0.03,\y) node[left]{$\g\,\,$};
\end{tikzpicture}
\caption{Left panel: The reference configuration and the graph of $G$. Values $G<1$ correspond to absorption, values $G>1$ to growth. Right panel: The elastic deformation $\phi_G$ with $L_0=1$, $X_I=0.8$, $G=(0.9,5.5)$, $z_I=0.72$, $g(L_0)=1.82$, $\ell_0=1$, $x_I=0.534$.\label{fig:RefVirt}}
\end{figure}
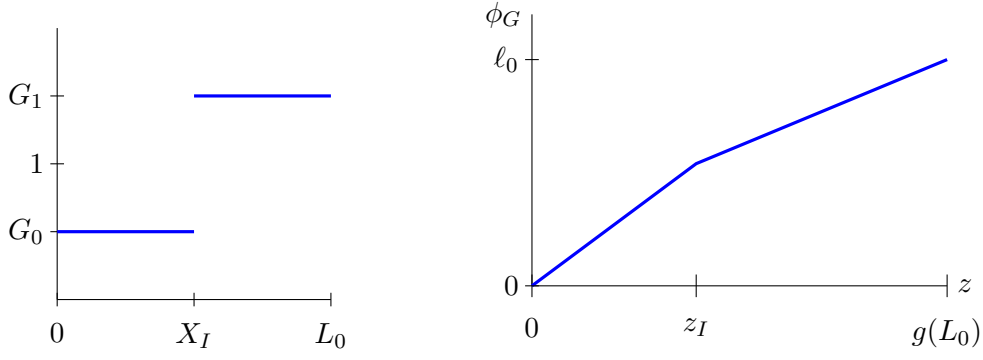

The equation for the nutrients has to be solved on the currert configuration $[0,\ell_0]$ and the coefficients are given by 
\begin{align*}
 D_G(x) & = D_0|_{[0,X_I]}\phi_G'|_{[0,z_I]}\chi_{[0,x_I]}(x) + D_0|_{[X_I,L_0]}\phi_G'|_{(z_I,g(L_0)]}\chi_{(x_I,\ell_0]}(x)\,, \\  \beta_G(x) & =\frac{ \beta_0|_{[0,X_I]}}{\phi_G'|_{[0,z_I]}}\chi_{[0,x_I]}(x) + \frac{ \beta_0|_{[X_I,L_0]}}{\phi_G'|_{(z_I,g(L_0)]}}\chi_{(x_I,\ell_0]}(x)
\end{align*}
where $x_I = \phi(z_I)$ is a $C^1$ function of $G$. The coefficients in the equation $-(D_Gn_G')' + \beta_G n_G=0$ are constant on $[0,x_I]$ and $(x_I, \ell_0]$ and the solution $n_G$ can be interpreted on each subinterval as a solution of a linear ordinary differential equation of second order of the form $-Dn'' + \beta n=0$ with the fundamental solution $n(x) = c^+\exp(\lambda x) + c^- \exp(-\lambda x)$ and $\lambda = \sqrt{\beta/D}$. The coefficients of $n_G$ in the representation 
\begin{align*}
 n_G(x) = ( c_0^+\exp(\lambda_0 x) + c_0^- \exp(-\lambda_0 x) ) \chi_{[0,x_I]} + ( c_1^+\exp(\lambda_1 x) + c_1^- \exp(-\lambda_1 x) ) \chi_{(x_I,\ell_0]}
\end{align*}
are determined from the boundary conditions, the continuity of the nutrients and their flux by 
\begin{align*}
 \Psi^1 & = c_0^+ + c_0^- - n_L = 0\,,\\ \Psi^2 & = c_1^+ \exp(\lambda_1 \ell_0) + c_1^-\exp(-\lambda_1 \ell_0) - n_R =0\,,\\ \Psi^3 & = c_0^+ \exp(\lambda_0 x_I) + c_0^-\exp(-\lambda_0 x_I) - ( c_1^+ \exp(\lambda_1 x_I) + c_1^-\exp(-\lambda_1 x_I) ) = 0\,, \\  \Psi^4 & =D_0|_{[0,X_I]} \phi_G'|_{[0,z_I]}(\lambda_0 c_0^+ \exp(\lambda_0 x_I) -\lambda_0 c_0^-\exp(-\lambda_0 x_I)) \\ & \qquad  -D_0|_{(X_I,L_0]} \phi_G'|_{(z_I,g(L_0)]}( \lambda_1 c_1^+ \exp(\lambda_1 x_I) -\lambda_1 c_1^-\exp(-\lambda_1 x_I) ) = 0 \,.
\end{align*}
Once $\phi_G$ has been obtained, the equation for $n_G$ is determined and the coefficients $D_G$ and $\beta_G$ are piecewise constant, uniformly elliptic and bounded for $G\in [\Gamma_0,\Gamma_1]^2$. Existence of a weak solution and regularity on the subintervals $(0,z_I)$ and $(z_I,\ell_0)$ follows with Lax-Milgram's theorem and elliptic regularity. Consequently, for each $G$ there exists a solution $n_G$ which is of the foregoing form that satisfies the nonlinear equation $\Psi(G_0,G_1,c_0^+,c_0^-,c_1^+,c_1^-)=0$. Since $x_I$, $D_i$, $\beta_i$, $i=1,2$, and the coefficients in the representation of $\phi_G$ are $C^1$ functions of $G$, $\Psi:(\Gamma_0/2,2\Gamma_1)^2\times \R^4\to \R^4$ is of class $C^1$ and the theorem on implicit functions implies that the coefficients $c_i^{\pm}$, $i=1,2$, are $C^1$ functions of $G$. In fact the derivative of $\Psi$ with respect to $c_i^{\pm}$, $i=1,2$, is given
with 
\begin{align*}
 D_{0L} = D_0|_{[0,X_I]},\, D_{0R} = D_0|_{(X_I,L_0]},\, A= \frac{D_{0R}\phi_{GR}'}{D_{0L}\phi_{GL}'}  \cdot \frac{\lambda_1}{\lambda_0}
\end{align*}
after a scaling in the last row by multiplication by $\diag(1,1,1,(D_{0L}\phi_{GL}'\lambda_0)^{-1})$ from the left
\begin{align*}
 \left(\begin{array}{cccc} 1 & 1 & 0 & 0 \\ 0 & 0 & \exp(\lambda_1\ell_0) & \exp(-\lambda_1\ell_0) \\ \exp(\lambda_0 x_I) & \exp(-\lambda_0 x_I)& -\exp(\lambda_1 x_I)& -\exp(-\lambda_1 x_I)\\ \exp(\lambda_0 x_I) & -\exp(-\lambda_0 x_I) & -A\exp(\lambda_1 x_I) & A\exp(-\lambda_1 x_I)\end{array} \right)\,.
\end{align*}
Since we only need to verify that the matrix is invertible, we may subtract the third row from the fourth row and expand the determinant along the first column. The determinant is given by 
\begin{align*}
& \exp(-\ell_0 \lambda_1 - x_I (\lambda_0+\lambda_1)) \\& \quad \cdot\bsqb{ (1+A) (\exp(2x_I\lambda_0 + 2\ell_0 \lambda_1) - \exp(2x_I \lambda_1)) + (-1+A)( \exp(2x_I(\lambda_0+\lambda_1) - \exp(2\ell_0 \lambda_1)) } 
\end{align*}
Since $0<x_I<\ell_0$ this expression is positive and the theorem on implicit functions applicable.
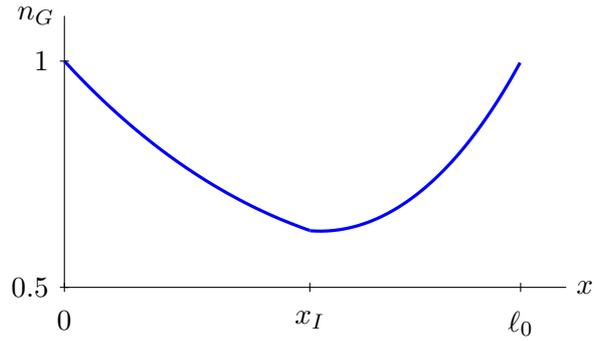
\begin{figure}[t]
\centering
\begin{tikzpicture}[xscale = 6, yscale=6]
\draw (0,0)--(1.1,0) node[right]{$x$};
\draw (0,0)--(0,0.6) node[left]{$n_G$};
\draw[domain=0:0.538,samples=100, blue, line width=1.2] plot ({\x}, {0.0885*exp(1.3368*\x) + 0.9114* exp(-1.3368*\x) -0.5});
\draw[domain=0.538:1, samples=100, blue, line width=1.2] plot ({\x}, {0.082*exp(2.3839*\x) + 1.1884* exp(-2.3839*\x)-0.5});
\foreach \g/\y in {1/0.5, 0.5/0} \draw(-0.01,\y) -- (0.01,\y) node[left]{$\g\,\,$};
\foreach \x/\y in {0/0 , 0.538/{x_I}, 1/{\ell_0}} {
  \draw (\x,-0.01)--(\x,0.01) ;
  \draw(\x,-0.03) node[below]{$\y$};
}
\end{tikzpicture}
\caption{The solution of the diffusion equation with $x_I=0.538$, $D_{0L}=1$, $D_{0R}=8$, $\beta_{0L}=1$, $\beta_{0R}=8$, $\phi_{GL}'=0.72$, $\phi_{GR}'=0.41$, $n_L=1$, $n_R=1$.\label{fig:Current}} 
\end{figure}

It remains to define $N(G)\in \mathcal{S}^0$ in the ordinary differential equation for the growth dynamics. One possible choice is 
\begin{align*}
 N(G)(X) =\frac{1}{x_I} \int_{0}^{x_I} n(x) \dv{x}\cdot\chi_{[0,X_I]}(X) + \frac{1}{\ell_0-x_I} \int_{x_I}^{\ell_0} n(x) \dv{x}\cdot\chi_{(X_I,L_0]}(X)\,.
\end{align*}
Both coefficients in the definition of $N(G)\in \mathcal{S}^0$ are continuously differentiable functions of $G$ and thus the map $N:(\Gamma_0/2, 2\Gamma_1)^2\to \mathcal{S}^0$ is Lipschitz continuous. 
\end{appendix}


\begin{thebibliography}{10}

\bibitem{AbelsLiuArXive2021}
{\sc Abels, H., and Liu, Y.}
\newblock On a fluid-structure interaction problem with growth.
\newblock ArXive 2110.00042, 2021.

\bibitem{AbelsLiuArXive2022}
{\sc Abels, H., and Liu, Y.}
\newblock Short time existence of a quasi-stationary fluid-structure
  interaction problem for plaque growth.
\newblock ArXive 2208.06280, 2022.

\bibitem{AliprantisBorderBook2006}
{\sc Aliprantis, C.~D., and Border, K.~C.}
\newblock {\em Infinite dimensional analysis}, third~ed.
\newblock Springer, Berlin, 2006.
\newblock A hitchhiker's guide.

\bibitem{AmbrosettiMalchiodi2007}
{\sc Ambrosetti, A., and Malchiodi, A.}
\newblock {\em Nonlinear analysis and semilinear elliptic problems}, vol.~104
  of {\em Cambridge Studies in Advanced Mathematics}.
\newblock Cambridge University Press, Cambridge, 2007.

\bibitem{AmbrosiEtAl2011}
{\sc Ambrosi, D., Ateshian, G., Arruda, E., Cowin, S., Dumais, J., Goriely, A.,
  Holzapfel, G., Humphrey, J., Kemkemer, R., Kuhl, E., Olberding, J., Taber,
  L., and Garikipati, K.}
\newblock Perspectives on biological growth and remodeling.
\newblock {\em Journal of the Mechanics and Physics of Solids 59}, 4 (4 2011),
  863--883.

\bibitem{AmbrosiGuillouContMechTD2007}
{\sc Ambrosi, D., and Guillou, A.}
\newblock Growth and dissipation in biological tissues.
\newblock {\em Contin. Mech. Thermodyn. 19}, 5 (2007), 245--251.

\bibitem{AmbrosiMollicaIntJEngrgSci2002}
{\sc Ambrosi, D., and Mollica, F.}
\newblock On the mechanics of a growing tumor.
\newblock {\em Internat. J. Engrg. Sci. 40}, 12 (2002), 1297--1316.

\bibitem{Ball1981OneDimensional}
{\sc Ball, J.~M.}
\newblock Remarques sur l'existence et la r\'{e}gularit\'{e} des solutions
  d'elastostatique non lin\'{e}aire.
\newblock In {\em Recent contributions to nonlinear partial differential
  equations}, vol.~50 of {\em Res. Notes in Math.} Pitman, Boston,
  Mass.-London, 1981, pp.~50--62.

\bibitem{Bangert2022}
{\sc Bangert, K.}
\newblock {\em A mathematical model for stress modulated growth and existence
  theorems in one spatial dimension}.
\newblock PhD thesis, Universit\"at Regensburg, 2022.

\bibitem{BressanPalladinoShen2017}
{\sc Bressan, A., Palladino, M., and Shen, W.}
\newblock Growth models for tree stems and vines.
\newblock {\em J. Diff. Eq 263\/} (2017), 2280--2316.

\bibitem{VanDykeHoger2002OpeningAngle}
{\sc Dyke, T. J.~V., and Hoger, A.}
\newblock A new method for predicting the opening angle for soft tissues.
\newblock {\em J Biomech Eng 124\/} (2002), 347--354.

\bibitem{GilbargTrudinger1998}
{\sc Gilbarg, D., and Trudinger, N.~S.}
\newblock {\em Elliptic partial differential equations of second order}.
\newblock Classics in Mathematics. Springer-Verlag, Berlin, 2001.
\newblock Reprint of the 1998 edition.

\bibitem{GorielyBook}
{\sc Goriely, A.}
\newblock {\em The mathematics and mechanics of biological growth}, vol.~45 of
  {\em Interdisciplinary Applied Mathematics}.
\newblock Springer, New York, 2017.

\bibitem{GorielyMoultonOUP2010}
{\sc Goriely, A., and Moulton, D.}
\newblock {\em New Trends in the Physics and Mechanics of Biological Systems}.
\newblock Oxford University Press, 2010, ch.~Morphoelasticity: A theory of
  elastic growth.

\bibitem{HuangOgdenPentaJElast2021}
{\sc Huang, R., Ogden, R.~W., and Penta, R.}
\newblock Mathematical modelling of residual-stress based volumetric growth in
  soft matter.
\newblock {\em J. Elasticity 145\/} (2021), 223--241.

\bibitem{Kroner1960}
{\sc Kr\"{o}ner, E.}
\newblock Allgemeine {K}ontinuumstheorie der {V}ersetzungen und
  {E}igenspannungen.
\newblock {\em Arch. Rational Mech. Anal. 4\/} (1960), 273--334 (1960).

\bibitem{KruzikMelchingStefanelliESAIM2020DislocationFreePlasticity}
{\sc Kru\v{z}\'{\i}k, M., Melching, D., and Stefanelli, U.}
\newblock Quasistatic evolution for dislocation-free finite plasticity.
\newblock {\em ESAIM Control Optim. Calc. Var. 26\/} (2020), Paper No. 123, 23.

\bibitem{Lee1969}
{\sc Lee, E.~H.}
\newblock Elastic-plastic deformation at finite strains.
\newblock {\em J. Appl. Mech. 36\/} (1969), 1--6.

\bibitem{LubardaHoger2002}
{\sc Lubarda, V.~A., and Hoger, A.}
\newblock On the mechanics of solids with a growing mass.
\newblock {\em Int. J. Solids Stuctures\/} (2002), 4627--4664.

\bibitem{MenzelKuhl2012}
{\sc Menzel, A., and Kuhl, E.}
\newblock Frontiers in growth and remodeling.
\newblock {\em Mech. Res. Commun. 42\/} (2021), 1--14.

\bibitem{PiatnitskiPtashnyk2020}
{\sc Piatnitski, A., and Ptashnyk, M.}
\newblock Homogenization of biomechanical models of plant tissues with randomly
  distributed cells.
\newblock {\em Nonlinearity 33}, 10 (2020), 5510--5542.

\bibitem{RodriguezHogerMcCulloch1994}
{\sc Rodriguez, E.~K., Hoger, A., and McCulloch, A.~D.}
\newblock Stress-dependent finite growth in soft elastic tissue.
\newblock {\em J. Biomech. 27\/} (1994), 455--467.

\bibitem{SchechterNonlinearAnalysis2004}
{\sc Schechter, M.}
\newblock {\em An introduction to nonlinear analysis}, vol.~95 of {\em
  Cambridge Studies in Advanced Mathematics}.
\newblock Cambridge University Press, Cambridge, 2004.

\bibitem{SkalakEtAlJTheorBiol1982}
{\sc Skalak, R., Dasgupta, G., Moss, M., Otten, E., Dulllemejer, P., and
  Vilman, H.}
\newblock Analytical description of growth.
\newblock {\em J. Theor. Biol. 94\/} (1982), 555--577.

\bibitem{TaberApplMechRev1995}
{\sc Taber, L.~A.}
\newblock Biomechanics of growth, remodeling and morphogenesis.
\newblock {\em Appl. Mech. Rev. 48\/} (1995), 487–--545.

\bibitem{YangEtAlJMathBio2016BloodVessels}
{\sc Yang, Y., J\"{a}ger, W., Neuss-Radu, M., and Richter, T.}
\newblock Mathematical modeling and simulation of the evolution of plaques in
  blood vessels.
\newblock {\em J. Math. Biol. 72}, 4 (2016), 973--996.

\end{thebibliography}
\end{document}